\theoremstyle{thmstyleone}%
\newtheorem{theorem}{Theorem}
\newtheorem{proposition}[theorem]{Proposition}%
\newtheorem{lemma}[theorem]{Lemma}
\newtheorem{corollary}[theorem]{Corollary} 
\theoremstyle{thmstyletwo}%
\newtheorem{remark}{Remark}%
\theoremstyle{thmstylethree}%
\newtheorem{definition}{Definition}%
\begin{document}

\begin{frontmatter}



\title{Regularity estimates of fractional heat semigroups related with uniformly elliptic operators} 

	
\author[label1]{Honglei Shi}
\ead{Honglei__Shi@163.com}

\author[label1]{Pengtao Li}
\ead{ptli@qdu.edu.cn}

\author[label1]{Kai Zhao\corref{cor1}}
\ead{zhaokaiqdu@163.com}
\cortext[cor1]{Corresponding author}

\address[label1]{School of Mathematics and Statistics, Qingdao University, Qingdao 266071, Shandong, P.R. China}



\begin{abstract}
Let $L = -div( A(x) \cdot \nabla ) + V(x)$ be a second-order uniformly elliptic operator on $\mathbb{ R }^{n}$ $(n\geq 3)$, where $A(x)$ is a real symmetric matrix satisfying standard ellipticity conditions, and $V$ is a nonnegative potential belonging to the reverse H\"older class.
	For $ \alpha \in (0,1) $, we study regularity estimates of the fractional heat semigroups $ \{ exp ( - t L ^ {\alpha } )\} _ { t > 0 }$, via the subordination formula and the fundamental solution of the associated uniformly parabolic equation $ \partial_t u + Lu = 0 .$
	This approach avoids the use of Fourier transforms and is applicable to second-order differential operators whose heat kernels satisfy Gaussian upper bounds. As an application, we characterize the Campanato-type space $\Lambda _ { L , \gamma } \left( \mathbb { R } ^ { n } \right)$ via the fractional heat semigroups $\{exp ( - t L ^ {\alpha } ) \} _ { t > 0 } $.
\end{abstract}

\begin{keyword}
Space-time regularity \sep fractional heat semigroup \sep uniformly elliptic operator \sep Campanato-type space

\MSC[2020]{22E30 \sep 42B35 \sep 35J10 \sep 47B38}

\end{keyword}

\end{frontmatter}



	\section{Introduction}\label{sec1}

In the theory of PDEs and mathematical physics, elliptic operators play a fundamental role. In particular, the operator $L := - div (A(x)\nabla) $ has widespread applications in various mathematical and physical models, including heat conduction, elasticity, electromagnetism, and fluid dynamics. The study of its spectral properties, regularity theory, and associated variational problems has not only advanced pure mathematics but also profoundly influenced engineering and physical sciences.
Early investigations of such operators date back to the late 19th and early 20th centuries, where the study of Dirichlet and Neumann problems laid the foundation for boundary value theory (see \cite{evans}). Subsequently, De Giorgi \cite{gio}, 	Nash \cite{nash}, and Moser \cite{moser1} independently developed novel methods for establishing the regularity of weak solutions, forming the cornerstone of modern elliptic operator theory. The study of general regularity theory for nonhomogeneous and nonsmooth coefficients remains an active research area in \cite{gil}.
In recent decades, this operator has gained significant attention in stochastic analysis and geometric analysis. For instance, diffusion processes in random environments are closely linked to uniformly elliptic operators in \cite{kipnis}, while in Riemannian geometry, its connection with the Laplace-Beltrami operator reveals important geometric invariants in \cite{gri1}. Furthermore, generalized forms of this operator have been extensively studied in nonlocal problems and fractional PDEs in \cite{caff}.

Let $ L $ be a divergence-type operator with a perturbation term:
$ L := - div (A(x) \cdot \nabla) + V(x) $,
where the potential function $ V $ belongs to the reverse H\"older class $ RH_q $ with $ q \in ( 1 , \infty ) $. The class $ RH_q $ is defined as the set of all non-negative locally $ L^q $-integrable functions $ V $ for which there exists a constant $ C > 0 $ such that for every ball $ B \subset \mathbb{R}^n $, the following inequality holds:
\begin{equation}\label{Bq}
	\left( \frac{1}{|B|} \int_B V^q \, dx \right)^{1/q}
	\leq \frac{C}{|B|} \int_B V \, dx .
\end{equation}
In the study of the influence of the potential function $ V (x) $, Simon \cite{simon} conducted a systematic investigation into the spectral theory of Schr\"odinger operators, exploring the impact of the potential function on the distribution of eigenvalues and spectral measures. Additionally, Davies \cite{davies} studied elliptic operators with singular potentials and established corresponding energy estimates.
In \cite{kur 1}, Kurata and Sugano explored the pointwise estimates and regularity properties of the fundamental solutions associated with the operator $ L $. Moreover, the authors further derived estimates for the operators $ V L^{-1} $, $ V^{1/2} \nabla L^{-1} $, and $ \nabla^2 L^{-1} $ under specific conditions on the coefficients $ a_{ij}(x) $ and the potential $ V $. These estimates were established in the context of weighted Lebesgue spaces and Morrey spaces, respectively.
Furthermore, for potentials $ V $ belonging to the reverse H\"older class, Kurata \cite{kur 0} established pointwise estimates for the heat kernels associated with the operator $ L $. Based on these estimates, the author also derived a weighted smoothing estimate for the semigroup generated by $ L $.
In \cite{ausc1}, Auscher et al. resolved the Kato square root problem, laying the foundation for interdisciplinary research between elliptic operators and harmonic analysis.
In a more general inhomogeneous context, Zhang \cite{zhang} investigated the relationship between the fundamental solutions of variable-coefficient elliptic operators and weighted Sobolev spaces.

When $ A(x) = E $ (the identity matrix), $ L = -\Delta + V $, which is the classical second-order differential operator known as the Schr\"odinger operator. The Schr\"odinger operator $ L $ holds significant importance in the fields of partial differential equations and mathematical physics, as the corresponding Schr\"odinger equation is fundamental to quantum mechanics. With the rapid development of nanotechnology and condensed matter physics, the study of the Schr\"odinger operator $ L $ has gained increasing attention and applications in modern times. Assume the potential function $ V $ belongs to the reverse H\"older class $RH_{q}$ with $ q > n / 2$, where $ n $ is the dimension of the space $ \mathbb{ R }^{n}$.
In \cite{shen}, Shen derived estimates for the fundamental solution associated with $ L $ and, as an application, proved the $ L^{p} $-boundedness of the Riesz transforms related to $ L $.
In \cite{sug}, Sugano investigated the $ L^{p} $-boundedness of the operator
$ V^{\alpha} L^{-\beta} $ for $ 0 \leq \alpha \leq \beta \leq 1 $, as well as the
$ L^{p}$-boundedness of the operator for $ 0 \leq \alpha \leq 1/2 \leq \beta \leq 1 $ and $ \beta - \alpha > 1/2 $. Subsequently, in 2009, Liu extended the results of \cite{sug} to stratified Lie groups in \cite{liu 1}.

Consider the degenerate Schr\"odinger operator defined by \[ L = -\frac{1}{\omega} \operatorname{div}(A(x) \nabla) + V  , \] where $ \omega $ is a weight function belonging to the Muckenhoupt class $ A_2 $, and $ A(x) = (a_{ij}(x)) $ is a real, symmetric matrix-valued function depending on $ x $. The matrix $ A(x) $ satisfies the ellipticity condition \[ C^{-1} \omega(x) |\xi|^2 \leq \sum_{i,j=1}^n a_{ij}(x) \xi_i \bar{\xi}_j \leq C \omega(x) |\xi|^2 \]  for all $ x, \xi \in \mathbb{R}^n $ and for some constant $ C > 0 $. The potential function $ V $ is assumed to be nonnegative and to belong to a reverse H\"older class with respect to the weighted measure $ \omega(x)\,dx $.
To analyze the behavior of nonnegative solutions to the corresponding degenerate elliptic equation, Fabes, Kenig, and Serapioni in \cite{fabes 2} studied the operator $L$ for the special case $ V = 0 $.
Furthermore, Fabes, Jerison, and Kenig in \cite{fabes 1} derived the fundamental solution $ \Gamma_0 $ of $L$ for the special case $ V = 0 $ in a ball. Notably, if $ \omega \in RD_v $ with $ v > 2 $,  \[ \Gamma_0(x, y) \simeq \frac{|x - y|^2}{\omega(B(x, |x - y|))},  \]
where $ RD_v $ denotes the class of functions satisfying the reverse doubling condition with respect to $ v $. For further details about $ L $ with $ V = 0 $, we refer to \cite{chen 1, chen 2, uribe 1, uribe 2} and the references therein.
In \cite{huang 1}, Huang, Li and Liu introduced the area integral associated with the heat semigroup $ \{T_t\}_{t>0} $ generated by the degenerate Schr\"odinger operator $ L $. They also established boundedness, Lipschitz regularity, time derivatives for the kernels of the semigroup. Harboure, Salinas, and Viviani in \cite{harb 2} studied boundedness results for operators related to the semigroup generated by the degenerate Schr\"odinger operator $ L $, under suitable assumptions on the weight $ \omega $ and the nonnegative potential $ V $.
Assuming that the weight $ \omega $ satisfies both the doubling condition and reverse doubling condition, Wang, Li, and Liu in \cite{wang} investigated various regularity estimates for the fractional heat semigroup $ \{exp(-tL^{\alpha}) \}_{t>0} $ using the subordinative formula, where $ L^\alpha $ denotes the fractional powers of $ L $ for $ \alpha \in (0, 1) $.

The main aim of this paper is  study regularity estimates of fractional  semigroups $ \{ exp(-tL^{\alpha}) \}_{t > 0} $ associated with the uniformly elliptic operators in divergence form \[ L : = L _{0} + V(x) = - div (A (x) \cdot \nabla ) + V ( x ) . \]
Let $ A = E $ (the identity matrix) and $ V = 0 $. For $ \alpha \in (0,1) $, $ \{exp(-tL^{\alpha})\}_{t>0} $ comes back to the classical fractional heat semigroup
$ \{exp(-t (-\Delta )^\alpha)\}_{t>0} $.
When $ V = 0 $, $ L = -\Delta $ is the classical Laplace operator, which is a fundamental object of study in the theory of partial differential equations and mathematical physics.
The fractional Laplacian $(- \Delta)^{\alpha}$ is extensively employed to model a diverse range of complex phenomena through partial differential equations. It has also found applications in the study of various physical systems and engineering challenges, such as L\'{e}vy flights, stochastic interfaces, and anomalous diffusion problems. Furthermore, the fractional heat semigroups $\{ exp (-t (-\Delta)^{\alpha}) \}_{t > 0}$ play a significant role in the fields of partial differential equations, harmonic analysis, potential theory, and modern probability theory. For more details and related applications of the fractional heat semigroups $\{exp(-t (-\Delta)^{\alpha}) \}_{t > 0}$, readers may refer to \cite{chang, gri, jiang 1, zhai}.
For general $ A( \cdot ) $ and potential $ V \in B_{q} $, the pointwise estimate and the
Lipschitz estimate of $ K_{ \alpha , t } ^ { L } ( \cdot , \cdot ) $ have been obtained by
Wang, Li, and Liu in \cite{wang} with $ \omega(x) = 1 $, where $ K_{ \alpha , t } ^ { L } ( \cdot , \cdot ) $ denotes the integral kernel of $ \{exp(-tL^{\alpha})\}_{t>0} $. For the regularity estimate of $ K_{ \alpha , t } ^ { L } ( \cdot , \cdot ) $, by the functional
calculus, Huang, Li, Liu and Shi in \cite{huang} obtained several estimates for
$ | \partial_{t} K_{ \alpha , t } ^ { L } ( \cdot , \cdot ) | $. For further developments on
this topic, we refer to \cite{jiang 2, zhang 0, li}.
In the regularity estimates for the kernels of the semigroup, one of the key tools is the subordinative formula (\ref{subor}). Thanks to the analytic properties of the heat semigroup $ \{ exp (-tL) \}_{t > 0} $, the estimates for time derivatives can be derived using the Cauchy integral formula. These estimates can then be extended to the fractional heat semigroups by applying the subordinative formula (\ref{subor}). However, for the spatial derivatives, the method used for time derivatives is no longer applicable. Instead, a more refined and technical approach is required, as detailed in Lemmas \ref{x K bdd} and \ref{x K smo}.

Let $ K_t^L(\cdot, \cdot) $ denote the kernel of the heat semigroup $ \{exp(-tL)\}_{t>0} $, and let $ K_{\alpha, t}^L(\cdot, \cdot) $ denote the kernel of the fractional heat semigroup $ \{exp(-tL^{\alpha})\}_{t>0} $.
Note that $K _ { t } ^ { L } ( \cdot , \cdot )$ is a positive, symmetric function on $\mathbb { R } ^ { n } \times \mathbb { R } ^ { n } $.
The fractional heat kernels can be defined via a method independent of the Fourier multipliers.
By \cite[page 20]{gri}, for $\alpha \in (0,1) $, there exists a nonnegative continuous function $\eta _ {t} ^ { \alpha } ( \cdot )$ satisfying (\ref{subordinative}) below such that
$$ K_{\alpha , t}^{L} ( x, y )=\int^{\infty}_{0} \eta _{t}^{\alpha} ( s ) K_{s}^{L} ( x, y ) \, d s, \quad x, y \in\mathbb{R}^{n}. 	$$
We point out that the regularity estimate obtained in \cite{huang} corresponds to the deviation in the regularity of the time-variable. Davies pointed out in \cite[page 107]{davies 1} that
a second order elliptic differential operator has a heat kernel which is a positive continuous function of the space and time variables jointly. Differentiability in the space variables is not a general fact, but depends upon some degree of smoothness of the coefficients of the differential operator (cf. \cite{davies 1}).
In Section \ref{sec 3.1}, we focus on the regularity estimates on the kernels of $\{ exp ( - t L ^ {\alpha } )  \} _ { t > 0 }$ for $ \alpha = 1$. Inspired by Gao-Jiang \cite{gao} and Lin \cite{lin 0}, we extend spatial derivative estimation of Schr\"odinger operators in \cite{duong 1} to uniform elliptic operator; see Lemma \ref{x K bdd}.
Motivated by the work of Dziuba\'nski-Preisner \cite{dziub 2}, we derive several related $ L ^ {p} $-estimates for $ 1 \leq p  < \infty $ as corollaries of Lemma \ref{x K bdd}; see Proposition \ref{Lbdd}.
By a simple computation, we can similarly obtain the $ L ^ {p} $-estimates of
$ K_{t}^{L} ( \cdot , \cdot ) $ and $ Q _ {t,m} ^ { L } ( \cdot , \cdot ) $ from Propositions \ref{K bdd} \& \ref{Q bdd}; see Proposition \ref{corollary}.
Due to the $L ^ { p }$-boundedness of the operator $ \nabla ^ { 2 } L _ { 0 }^ { - 1 } $ obtained in \cite[Theorem B]{avell}, where $ L _ { 0 } = - div (A (x) \cdot \nabla )$, we can further establish the Lipschitz continuity of $ \nabla _ { x } K _ { t } ^ { L }  ( \cdot , \cdot ) $ under the assumption that $\left(a _ { i j } ( x )\right)$ satisfies the conditions $(A1)$ \& $(A2)$ \& $(A3)$ in Lemma \ref{x K smo}.

Throughout this paper, we assume that $A(x) = ( a_{ij} (x) )$ is a real symmetric matrix depending on $x$ which satisfies $$C^{-1}\left |\xi \right |^{2} \leq \sum_{ i , j } ^ {n} a_{ij} (x) {\xi}_{i} \overline{\xi}_{j}\leq C \left |\xi \right |^{2} $$ for some positive constant $C$ and all $x,\xi $ in $\mathbb{ R }^{n}$, $V$ is a nonzero, nonnegative potential belonging to the reverse H\"older class $B _ { q }$, $q > n / 2,$ which is defined in (\ref{Bq}). Additionally, we always assume that $a _ { i j } ( x )$ is a real-valued measurable function satisfying the following conditions (A1) and (A2).

(A1). There exists a constant $\lambda \in ( 0 , 1 ]$ such that for $ x  \in \mathbb { R } ^ { n } $ and $\xi = ( \xi_{ 1 }, \xi_{ 2 },\ldots, \xi_{ n } ) \in \mathbb { R } ^ { n }$,
\[a _ { i j } ( x ) = a _ { j i } ( x ) , \; \lambda | \xi | ^ { 2 } \leq \sum _ { i , j = 1 } ^ { n } a _ { i j } ( x ) \xi _ { i } \xi _ { j } \leq \lambda ^ { - 1 } | \xi | ^ { 2 }  .\]

(A2). There exist constants $\alpha \in ( 0 , 1 ]$ and $K > 0$ such that
\[ a _ { i j } \in  C ^ { 1 +  \alpha }  \left( \mathbb { R } ^ { n } \right) ,  \;
\left\| a _ { i j } \right\| _ { C ^ {  1 + \alpha }  }
= \left\| a _ { i j }  \right\|_{\infty} +  \left\| \nabla a _ { i j }  \right\|_{\infty}
+  \sup \limits_{ x \neq y } \displaystyle \frac{ | a_{ij} (x) -  a_{ij} (y)| }{ | x - y | ^ { \alpha } }	\leq K .   \]

The following estimates are well-known under the assumptions (A1) and (A2)
(see \cite[Theorem (3.3)]{gru}). There exist constants $C _ { 1 } = C _ { 1 } ( n , \lambda ) , C _ { 2 } = C _ { 2 } ( n , \lambda , K )$ and $C _ { 3 } = C _ { 3 } ( n , \lambda , K )$ such that
\begin{equation} \label{green}
	\left\{\begin{aligned}
		&0 \leq \Gamma _ { 0 } ( x , y ) \leq \frac { C _ { 1 } } { | x - y | ^ { n - 2 } } ;\\
		& \left| \nabla _ { x } \Gamma _ { 0 } ( x , y ) \right|
		\leq \frac { C _ { 2 } } { | x - y | ^ { n - 1 } } ;\\
		& \left| \nabla _ { x } \nabla _ { y } \Gamma _ { 0 } ( x , y ) \right|
		\leq \frac { C _ { 3 } } { | x - y | ^ { n  } } .
	\end{aligned}\right.
\end{equation}

To study the Lipschitz property of $K _ { t } ^ { L }  ( \cdot , \cdot ) $, we need the $L ^ { p }$-boundedness of the operator $ \nabla ^ { 2 } L _ { 0 } ^ { - 1 } $, where $L_{0} = -div ( A(x) \cdot \nabla )$. By \cite[Theorem B]{avell}, we impose the additional condition $( A 3)$ on $a _ { i j } ( x )$.

(A3). There exists a constant $\alpha \in ( 0 , 1 ]$ such that for all $ x \in \mathbb { R } ^ { n } $ and $ z \in \mathbb { Z } ^ { n } $,
\[ a _ { i j } ( x + z ) =  a _ { i j } ( x  ) , \quad \sum_ { i = 1 } ^ { n } \partial_{ i } \left( a _ { i j } (x) \right) = 0  \;
( j = 1 , 2 , \cdots , n )  . \]

Generally, for a second-order differential operator $L$ on $\mathbb{R}^{n}$ and other settings, the fractional operators $L^{\alpha}$ is defined as follows.
For $ \alpha \in (0,1)$, the fractional power of $L $ denoted by
$L ^ { \alpha }$ is defined as
\begin{equation}\label{def fra}
	L^ { \alpha } ( f ) = \frac{ 1 }{ \Gamma ( - \alpha ) } \int_{ 0 } ^ { \infty }
	\left(exp(-t \sqrt{L} ) f(x) -f(x)\right)
	\frac{dt}{t^{1+ 2\alpha }}  \quad  \forall f \in L^{2} ( \mathbb{R}^{n} ) .
\end{equation}
Due to the fact that time-fractional operators are proven to be very useful for modeling purposes, the field of fractional calculus has seen growing interest recently. For instance, the time-fractional heat equation,
\begin{equation} \label{equ heat}
	\partial_t^{\beta} u(x,t) = \Delta u(x,t)
\end{equation}
is frequently used to model the heat propagation in inhomogeneous media. Unlike its classical counterpart, this equation reflects sub-diffusive behavior and is closely linked to anomalous diffusion processes in media with fractal or random structures.
It is known that, as opposed to the classical heat equation, Equation (\ref{equ heat}) is known to exhibit sub-diffusive behaviour and is related to anomalous diffusions or diffusions in non-homogeneous media, with random fractal structures.
Based on this situation, we further explore the time fractional equation
\begin{equation}
	\partial _ { t } ^ { \beta } u ( x , t ) = div ( A(x) \cdot \nabla ) u ( x , t ) .
\end{equation}
In Section \ref{sec 3.2}, we define the fractional derivative of $K_{\alpha,t}^{L}(\cdot ,\cdot )$ for $\beta > 0$ as follows:
\begin{equation} \label{10}
	\partial_{ t }^{\beta}K_{\alpha,t}^{L}(x,y):=\frac{exp(-i\pi (m-\beta))}{\Gamma(m-\beta )}
	\int_{0}^{\infty} \partial_{ t }^{m} K_{\alpha,t+u}^{L}(x,y)u^{m-\beta} \frac{du}{u}, \quad  m=[\beta]+1 .
\end{equation}
Following the work \cite{li}, we obtain the boundedness and the Lipschitz smoothness of $ K_{ \alpha , t } ^ { L } (\cdot , \cdot) $ through the subordination formula (\ref{subor});
see (i) and (ii) of Proposition \ref{bdd frac diff} .
Additionally, we establish regularity estimates for the kernel of the operator
$t ^ { \beta } \partial _ { t } ^ { \beta } exp(-tL^{\alpha}) $, denoted by $ D _ {\alpha,t}^{L,\beta} \left( \cdot , \cdot \right)$; see Proposition \ref{bdd D}.
Moreover, using Lemmas \ref{x K bdd} \& \ref{x K smo}, we establish the desired estimates of the kernels of $ t ^ { 1 / ( 2 \alpha ) } \nabla_{ x } exp(-tL^{\alpha})$ for $ \alpha \in ( 0, 1 ) $; see Propositions \ref{t x frac}. At the end of Section \ref{sec 3}, according to (\ref{def frac L}), we derive analogous estimates for $ \widetilde{D}_{\alpha, t} ^ {L, \beta } (\cdot , \cdot )$, which is the kernel of $t^{\beta } L ^{\alpha \beta } exp(-tL^{\alpha})$; see Lemma \ref{bdd}.

As a concrete application, Section \ref{sec 4} focuses on a characterization of the Campanato type spaces $ \Lambda_{L,\gamma} \left( \mathbb { R }^{n} \right) $ associated with $L $, in terms of the fractional heat semigroup $\{ exp ( - t L ^ {\alpha } ) \} _ { t > 0 } $. Over the past decades, significant attention has been directed toward characterizing function spaces related to Schr\"odinger operators through semigroup methods and Carleson measures. 	Notably, for potentials $V\in B_{q}$ with $q>n/2$, Dziuba\'{n}ski-Garrig\'{o}s-Mart\'{i}nez-Torrea-Zienkiewicz \cite{dziub 1} provided a Carleson measure characterization of the space $ B M O_{L}\left( \mathbb { R } ^ { n } \right)$ with $ L = - \Delta + V $, employing the operator family $\{ t \partial _ { t } exp ( - t L ) \} _ { t > 0 } $.
Building upon the foundational work of Dziuba\'nski et al. \cite{dziub 1}, Wu and Yan \cite{wu} extended the Carleson measure characterization to the BMO type space related to generalized Schr\"odinger operators by replacing the potential $V$ with a general Radon measure $\mu $.
In the context of the Heisenberg group, Lin and Liu \cite{lin} obtained analogous results. Furthermore, by fractional derivatives of the Poisson semigroup, Ma et al. \cite{ma 1} derived characterizations of Campanato-type spaces associated with the operator $L = - \Delta + V $.
For further information, we refer readers to \cite{duong 1, duong 2, huang 3, jiang 2, song, yang 1, yang 3}.
Under the assumption that $ L = - div ( A ( x ) \cdot  \nabla ) + V $ with $V \in B _ { q } $ for some $ q > n $, we apply the regularity estimates in Section \ref{sec 3} to characterize
$\Lambda_{L,\gamma}(\mathbb{R}^n)$ as follows:
\begin{equation}\label{characterizations}
	\begin{aligned}
		& f \in \Lambda_{L,\gamma} \left( \mathbb { R }^{n} \right)  \\
		&\sim \sup _ { B } \frac { 1 } { | B | ^ { 1 + 2 \gamma / n } } \int _ { 0 } ^ { r _ { B } ^ {2\alpha} }
		\int _ { B } \left| t ^ { \beta / \alpha } L ^ {\beta} exp ( - t L ^ {\alpha } ) ( f ) ( x ) \right| ^ { 2 }
		\frac { d x d t } { t } < \infty \\
		&\sim \sup _ { B } \frac { 1 } { | B | ^ { 1 + 2 \gamma / n } } \int _ { 0 } ^ { r _ { B } ^ {2\alpha} }
		\int _ { B } \left|  t ^ { \beta } \partial _ { t } ^ { \beta } exp ( - t L ^ {\alpha } ) ( f ) ( x ) \right| ^ { 2 } \frac { d x d t } { t } < \infty \\
		&\sim \sup _ { B } 	\frac { 1 } { | B | ^ { 1 + 2 \gamma / n } }  \int _ { 0 } ^ {  r _ { B } ^ { 2 \alpha } } \int _ { B } \left| t ^ { 1 / ( 2 \alpha ) } \nabla_{ \alpha }   exp ( - t L ^ {\alpha } )  ( f ) ( x ) \right|
		^ { 2 }
		\frac { d x d t } { t }   < \infty
	\end{aligned}
\end{equation}
for $0 < \gamma < \min \{ 1 , 2 \alpha , 2 \alpha \beta \} $; see Theorems \ref{fD1}.
From the characterizations in (\ref{characterizations}), we further obtain the following characterizations of the Campanato-Sobolev space $ \Lambda ^ { \kappa } _ { L , \gamma } \left( \mathbb { R } ^ { n } \right)  $ for measurable functions $f$ on $\mathbb { R } ^ { n }$,
\begin{align*}
	& \displaystyle \left\| f \right\| _ { \Lambda ^ { \kappa } _ { L , \gamma } }
	 = \displaystyle  \left\| L ^ { \kappa } f \right\| _ { \Lambda _ { L , \gamma } }  \\
	& \sim \left( \sup _ { ( x _ {0} , r ) \in \mathbb { R } ^ { n + 1 } _ {+} } r ^ { - ( 2 \gamma + n ) }
	\int _ { B ( x_{0} , r ) } \int _ { 0 } ^ { r ^ { 2 \alpha } }
	\left| t ^ { \beta / \alpha } L ^ {\beta} exp ( - t L ^ {\alpha } ) L ^ { \kappa }  f ( x ) \right|
	^ { 2 } \frac{ d t d x }{ t }  \right) ^ { 1 / 2 }   \\
	& \sim \left( \sup _ { ( x _ {0} , r ) \in \mathbb { R } ^ { n + 1 } _ {+} } r ^ { - ( 2 \gamma + n ) }
	\int _ { B ( x_{0} , r ) } \int _ { 0 } ^ { r ^ { 2 \alpha } }
	\left|   t ^ { \beta } \partial _ { t } ^ { \beta } exp ( - t L ^ {\alpha } )  L ^ { \kappa }  f ( x ) \right|
	^ { 2 } \frac{ d t  d x }{ t }  \right) ^ { 1 / 2 }   \\
	& \sim \left( \sup _ { ( x _ {0} , r ) \in \mathbb { R } ^ { n + 1 } _ {+}} r ^ { - ( 2 \gamma + n ) }
	\int _ { B ( x_{0} , r ) } \int _ { 0 } ^ { r ^ { 2 \alpha } } \left| t ^ { 1 / ( 2 \alpha ) } \nabla_{ \alpha }   exp ( - t L ^ {\alpha } )  L ^ { \kappa } f ( x ) \right|
	^ { 2 } \frac{ d t d x }{ t }  \right) ^ { 1 / 2 }   ,
\end{align*}
where $0 < \gamma < \min \{ 1 , 2 \alpha , 2 \alpha \beta \} $, $ \kappa \in (0,\alpha ) $ and $ \alpha \in (0,1) $; see Corollary \ref{Pt character}.

\begin{remark}
	In Section \ref{sec 3.2}, we investigate the regularity estimates for the kernel of operator semigroup $\{ exp(-t L) \}_{t > 0}$, which generalize several results on the regularities of  Schr\"odinger operators. When $\alpha = 1 / 2 $ and $ A (x) = E$, $K _ { 1 / 2 ,t} ^{L} ( \cdot ,\cdot ) = P _ { t } ^ { L } ( \cdot , \cdot )$ coincides with the Poisson kernel associated with the Schr\"odinger operator. In this case, Proposition \ref{t x frac} reduces to \cite[Lemma 3.9]{duong 1}. Furthermore, we generalize \cite[Proposition 3.6, (b)-(d)]{ma 1} and establish the regularities of $\partial _ { t } ^ { \beta  } K _ { \alpha, t } ^ { L } \left( \cdot, \cdot \right)$; see Proposition \ref{bdd D}.
\end{remark}

We collect the notation to be used throughout this paper:
\begin{itemize}
	\item $\cup \sim \vee$ indicates that there exists a constant $c > 0$ such that
	$c ^ { - 1 } \vee \leq \cup \leq c \vee $. Moreover, $\cup \leq c \vee$ is denoted by $\cup \lesssim \vee$. Similarly, $\vee \gtrsim \cup$ stands for $\vee \geq c \cup$;
	\item Throughout the paper, the positive constant $C$ may differ in each occurrence, with its dependence restricted to fixed parameters including the dimension $n$, $\alpha$, $\beta$, and similar quantities;
	\item Let $B$ be a ball of radius $r$. In the following, for any constant $c > 0$, we use $B_{cr}$ to represent the ball with the same center as $B$ but with radius $cr$.
	\item $\partial _ { j } = \nabla _ { j } = \nabla _ { x _ { j } } = \frac { \partial } { \partial x _ { j } } , |{\nabla}_{x} u(x,t)|^2 = \sum\limits_{j=1}^{n} \left( \frac{\partial u}{\partial x_j} \right)^2 , \mathbb{R}^{n+1}_{+}= \mathbb{R}^{n}\times (0,+\infty) , $
	\item $\nabla f(x,t;y,s)=(\partial_{x_1} f,\partial_{x_2}f, \ldots, \partial_{x_n}f)(x,t;y,s),   |x|=\left(\sum_{i=1}^{n}{  x_{i}}^{2}\right)^{1/2},$
	\item $B=B(x_0,r)=\left\{y\in \mathbb{R}^{n}:|y-x_0|<r\right\}, \; |(x,t)|=d\left((x,t),(0,0)\right) , \\$
	\item $Q=Q_r(x_0,t_{0})=\left\{ (x,t)\in{ {\mathbb{R}^{n+1}_{+}}}:\left(|x-x_0|^2+|t-t_0|\right)^{{1}/{2}}<r \right\}.$
\end{itemize}

\section{Preliminaries}
\subsection{Schr\"odinger operators }
By H\"older's inequality, we can obtain $B _ { q_{1} } \subset B _ { q_{2} }$ for $q _ { 1 } \geq q _ { 2 } > 1 $. One remarkable feature about the class $B _ { q }$ is that if $V \in B _ { q }$ for some $q > 1$ then there exists $\epsilon > 0$, depending only on $n$ and the constant $C $ in (\ref{Bq}) such that $V \in B _ { q + \epsilon } $. If $V \in B_q$ with some $q > 1$, the measure $V(x) \; dx$ is doubling, i.e., for all $r > 0$ and $x \in \mathbb{R}^n$,
\begin{equation}\label{equ doubling}
	\int _ { B ( x , 2 r ) } V ( y ) d y \leq C _ { 0 } \int _ { B ( x , r ) } V ( y ) d y .
\end{equation}

The auxiliary function $m ( x , V )$ is defined by
\begin{equation}\label{def m(x,V)}
	\frac { 1 } { m ( x , V ) } : = \sup \left\{ r > 0 : \frac { 1 } { r ^ { n - 2 } } \int _ { B ( x , r ) }  V ( y ) d y \leq 1 \right\} .
\end{equation}
The function $m ( x , V )$ reflects the scale of $V ( x )$ essentially, but behaves better. It was introduced by Shen \cite{shen} and will play a crucial role in our proof.
Clearly, $0 < m ( x , V ) < \infty$ for every $x \in \mathbb { R } ^ { n } $, and $  r ^ { 2 - n }  \int _ { B ( x , r ) } V ( y ) d y = 1 $ if $r = 1 / m ( x , V ) $. For simplicity, we sometimes denote $1 / m ( x , V )$ by $\rho ( x )$ in the proofs.
Below we state some important properties of $m ( x , V )$ which will be used in the sequel.

\begin{lemma}{\rm (\cite [Lemmas 1.2 \& 1.4 \& 1.8] {shen})}\label{special mV}
	\item[(1)] There exists a constant $C > 0 $ such that for $x \in \mathbb { R } ^ { n } $,
	\begin{equation*}
		\frac { 1 } { r ^ { n - 2 } } \int _ { B ( y , r ) } V ( x ) d x \leq  C \left( \frac { r } { R } \right) ^ { 2 - n / q } \frac { 1 } { R ^ { n - 2 } } \int _ { B ( y , R ) } V ( x ) d x , \quad 0 < r < R < \infty .
	\end{equation*}
	\item[(2)] There exist constants $l _ { 0 } > 0$ such that for $x$, $y$ in $\mathbb{ R }^{n}$,
	\[\left\{ \begin{aligned}
		m ( y , V ) &\lesssim ( 1 + | x - y | m ( x , V ) ) ^ { l _ { 0 } } m ( x , V ) ; \\
		m ( y , V ) &\gtrsim  m ( x , V ) ( 1 + | x - y | m ( x , V ) )
		^ { - l _ { 0 } / \left( 1 + l _ { 0 } \right) } .
	\end{aligned} \right.\]
	Specially, $ \rho (y) \sim \rho (x) $ if $ | x - y | \lesssim \rho (x) $.
	\item[(3)] There exist constants $l _ { 0 } > 0 $ such that for $R \geq m ( x , V ) ^ { - 1 } $,
	\[\frac { 1 } { R ^ { n - 2 } } \int _ { B ( x , R ) } V ( y ) d y \lesssim ( R m ( x , V ) ) ^ { l _ { 0 } } .\]
\end{lemma}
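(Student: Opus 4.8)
The plan is to deduce (1) immediately from the reverse H\"older inequality (\ref{Bq}), and then to obtain (2) and (3) by comparing the quantity
\[
\psi(x,r):=\frac{1}{r^{n-2}}\int_{B(x,r)}V(y)\,dy
\]
at various radii and centres; recall that $\rho(x):=1/m(x,V)$ is exactly the radius with $\psi(x,\rho(x))=1$. For (1): by H\"older's inequality $\int_{B(y,r)}V\le\big(\int_{B(y,R)}V^{q}\big)^{1/q}|B(y,r)|^{1-1/q}$, while (\ref{Bq}) on $B(y,R)$ bounds $\big(\int_{B(y,R)}V^{q}\big)^{1/q}$ by $C|B(y,R)|^{1/q-1}\int_{B(y,R)}V$; since $|B(y,r)|/|B(y,R)|=(r/R)^{n}$, multiplying these and dividing by $r^{n-2}$ gives exactly $\psi(y,r)\le C(r/R)^{2-n/q}\psi(y,R)$, with $2-n/q>0$ because $q>n/2$. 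Besides (1), the one other basic tool is the doubling property (\ref{equ doubling}), which reads $\psi(x,2r)\le 2^{2-n}C_{0}\,\psi(x,r)$ (and trivially $\psi(x,2r)\ge 2^{2-n}\psi(x,r)$): thus scaling the radius by a factor $\lambda$ changes $\psi(x,\cdot)$ by a factor that is at most and at least polynomial in $\lambda$.

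For (2): fix $x,y$, write $r=|x-y|$ and $\rho=\rho(x)$. If $r\le\rho$, then $B(y,\rho)\subseteq B(x,2\rho)$ and $B(x,\rho)\subseteq B(y,2\rho)$, which with the doubling bound yield $\psi(y,\rho)\lesssim1$ and $\psi(y,2\rho)\gtrsim1$; since (1) controls how $\psi(y,\cdot)$ varies under scaling of the radius, the radius $\rho(y)$ at which $\psi(y,\cdot)=1$ must lie within a fixed multiple of $\rho$, i.e.\ $m(y,V)\sim m(x,V)$, which in particular yields the last assertion of (2). If instead $r>\rho$, then $B(y,\rho)\subseteq B(x,2r)$, so $\psi(y,\rho)\le(2r/\rho)^{n-2}\psi(x,2r)$, and chaining the doubling bound over the $\approx\log_{2}(r/\rho)$ dyadic scales from $\rho$ to $r$ gives $\psi(x,2r)\lesssim(r/\rho)^{c}$ for some $c\ge0$; inserting the resulting bound $\psi(y,\rho)\lesssim(r/\rho)^{n-2+c}$ into (1) produces a radius $s\gtrsim\rho\,(r/\rho)^{-l_{0}}$ with $\psi(y,s)\le1$, hence $m(y,V)\lesssim m(x,V)\,(1+|x-y|m(x,V))^{l_{0}}$ for a suitable $l_{0}$ --- the first inequality of (2).

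The second inequality of (2) comes from applying the first with $x$ and $y$ swapped, $m(x,V)\lesssim m(y,V)\,(1+|x-y|m(y,V))^{l_{0}}$, and solving for $m(y,V)$: in the regime $|x-y|m(y,V)>1$ this gives $m(x,V)\lesssim|x-y|^{l_{0}}m(y,V)^{1+l_{0}}$, hence $m(y,V)\gtrsim m(x,V)\,(|x-y|m(x,V))^{-l_{0}/(1+l_{0})}$, and it is exactly this inversion that produces the exponent $l_{0}/(1+l_{0})$ (the complementary regime is trivial, and along the way one checks $|x-y|m(x,V)\sim|x-y|m(y,V)$ up to the same polynomial factor). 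For (3): from $\psi(x,\rho(x))=1$, iterating $\psi(x,2r)\le 2^{2-n}C_{0}\psi(x,r)$ gives $\psi(x,2^{k}\rho(x))\le(2^{2-n}C_{0})^{k}$ for all $k\ge0$; for $R\ge\rho(x)$ choose $k$ with $2^{k}\rho(x)\le R<2^{k+1}\rho(x)$ and estimate $\psi(x,R)=R^{2-n}\int_{B(x,R)}V\le(2^{k}\rho(x))^{2-n}\int_{B(x,2^{k+1}\rho(x))}V\lesssim(2^{2-n}C_{0})^{k}\lesssim(R/\rho(x))^{l_{0}}=(Rm(x,V))^{l_{0}}$ with $l_{0}=\max\{0,\log_{2}(2^{2-n}C_{0})\}$.

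The one genuinely delicate point is the lower estimate in (2): obtaining the sharp exponent $l_{0}/(1+l_{0})$ and verifying that $|x-y|m(x,V)$ and $|x-y|m(y,V)$ are comparable up to the same polynomial factor requires threading the upper estimate carefully through the inversion and tracking constants. Everything else reduces to routine manipulation of $\psi$ using (\ref{Bq}) and (\ref{equ doubling}).
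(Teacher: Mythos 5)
Your proof is correct, and it reconstructs essentially the argument in Shen's original paper (which the present paper cites without reproducing a proof): part (1) is the reverse H\"older inequality combined with H\"older, parts (2) and (3) follow by comparing $\psi(x,r)=r^{2-n}\int_{B(x,r)}V$ at different centres and radii via ball inclusions, the doubling property, and iteration of (1). The one place you flag as delicate --- the lower bound in (2) with exponent $l_0/(1+l_0)$ --- in fact falls out directly once you have the upper bound with $x$ and $y$ swapped, since $m(x,V)\lesssim m(y,V)(1+|x-y|m(y,V))^{l_0}$ gives $a\lesssim b(1+b)^{l_0}$ with $a=|x-y|m(x,V)$, $b=|x-y|m(y,V)$, and solving for $b$ in the nontrivial regime $b>1$ yields $b\gtrsim a^{1/(1+l_0)}$, hence $m(y,V)/m(x,V)=b/a\gtrsim a^{-l_0/(1+l_0)}$; no separate comparison of $a$ and $b$ is required.
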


\begin{lemma}{\rm (\cite [Lemma 1] {guo})}\label{xiao V 2}
	Suppose $V \in B _ { q } $, $q > n / 2.$ Let $m _ { 0 } > \log _ { 2 } C _ { 0 } + 1 $, where $ { C } _ { 0 }$ is the constant in (\ref{equ doubling}). Then for any $x _ { 0 } \in \mathbb { R } ^ { n } , R > 0 $, \[  \left( 1 + R m \left( x _ { 0 } , V \right) \right) ^ { - m _ { 0 } }  \int _ { B ( x_{0} , R ) } V ( x ) d x \lesssim R ^ { n - 2 } .\]
\end{lemma}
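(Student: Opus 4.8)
The plan is to write $\rho := 1/m(x_0,V)$, recall from (\ref{def m(x,V)}) that $\int_{B(x_0,\rho)} V(y)\,dy = \rho^{n-2}$, and split the argument according to whether $R$ lies below or above the critical scale $\rho$. When $0<R\le\rho$ one has $1+Rm(x_0,V)\in[1,2]$, so $(1+Rm(x_0,V))^{-m_0}\le 1$ and it suffices to show $\int_{B(x_0,R)} V \lesssim R^{n-2}$; this is exactly where the reverse-Hölder hypothesis enters. Applying Lemma \ref{special mV}(1) with inner radius $R$ and outer radius $\rho$ gives
\[
R^{2-n}\int_{B(x_0,R)} V \le C\Bigl(\frac{R}{\rho}\Bigr)^{2-n/q}\rho^{2-n}\int_{B(x_0,\rho)} V = C\Bigl(\frac{R}{\rho}\Bigr)^{2-n/q},
\]
and since $q>n/2$ forces $2-n/q>0$ while $R/\rho\le 1$, this yields $\int_{B(x_0,R)} V \le CR^{n-2}$, which is more than enough.

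For $R>\rho$ (equivalently $Rm(x_0,V)>1$) we have $1+Rm(x_0,V)>R/\rho>1$, and I would bound $\int_{B(x_0,R)} V$ by iterating the doubling inequality (\ref{equ doubling}): choosing the integer $k\ge 1$ with $2^{k-1}\rho<R\le 2^k\rho$,
\[
\int_{B(x_0,R)} V \le \int_{B(x_0,2^k\rho)} V \le C_0^{\,k}\int_{B(x_0,\rho)} V = C_0^{\,k}\rho^{n-2}.
\]
From $2^k<2R/\rho$ we get $k<1+\log_2(R/\rho)$, hence $C_0^{\,k}\le C_0\,(R/\rho)^{\log_2 C_0}$, and therefore
\[
\bigl(1+Rm(x_0,V)\bigr)^{-m_0}\int_{B(x_0,R)} V \le \Bigl(\frac{R}{\rho}\Bigr)^{-m_0}C_0\Bigl(\frac{R}{\rho}\Bigr)^{\log_2 C_0}\rho^{n-2} = C_0\Bigl(\frac{R}{\rho}\Bigr)^{\log_2 C_0-m_0}\rho^{n-2}.
\]
Since $m_0>\log_2 C_0+1>\log_2 C_0$ the exponent is negative and $R/\rho>1$, so the power is $\le 1$; combined with $\rho^{n-2}\le R^{n-2}$ (valid because $\rho<R$ and $n\ge 3$), this gives the claimed bound $\lesssim R^{n-2}$. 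Together with the first regime, the lemma follows.

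I expect the only genuinely delicate point to be recognizing that the two regimes are governed by different mechanisms. The small-scale estimate cannot come from doubling alone: it is precisely the $B_q$ hypothesis with $q>n/2$, through the scaling inequality in Lemma \ref{special mV}(1), that prevents $\int_B V$ from growing faster than $R^{n-2}$ below the scale $\rho$. The large-scale estimate, by contrast, is a pure doubling phenomenon, and the role of the hypothesis $m_0>\log_2 C_0+1$ is merely to make the prefactor decay fast enough to absorb the $(R/\rho)^{\log_2 C_0}$ growth produced by $k$-fold iteration of (\ref{equ doubling}) — in fact the weaker condition $m_0\ge\log_2 C_0$ already suffices for that step, so there is slack in the stated hypothesis. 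Everything else is bookkeeping.
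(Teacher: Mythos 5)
Your proof is correct. Note that the paper does not prove this lemma itself but merely quotes it from \cite{guo}; nonetheless, the two-regime argument you give --- reverse H\"older scaling via Lemma \ref{special mV}(1) below the critical scale $\rho=1/m(x_0,V)$ (exploiting $2-n/q>0$) and iterated doubling above it --- is exactly the standard proof of this fact in the literature, and every step checks out: the normalization $\int_{B(x_0,\rho)}V=\rho^{n-2}$, the bound $C_0^k\le C_0(R/\rho)^{\log_2 C_0}$ from $2^{k-1}\rho<R$, the use of $\rho^{n-2}\le R^{n-2}$ valid since $n\ge 3$, and the observation that the exponent hypothesis has slack (any $m_0\ge\log_2 C_0$ already closes the large-scale estimate).
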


\begin{lemma} {\rm (\cite [Lemma 6] {li})} \label{lemma}
	There exist constants $\delta ^ {\prime}>0$ and $ m_{0} >0$ such that
	\begin{equation*}
		\frac { 1 } { t ^ { n / 2 } } \int _ { \mathbb { R } ^ { n } } exp( - c | x - y | ^ { 2 } / t ) V ( y ) d y \lesssim \left\{
		\begin{aligned}
			& { t }^{-1} \left( \frac { \sqrt { t } } { \rho ( x ) } \right) ^ { \delta ^ {\prime} } ,
			& \sqrt { t } < m ( x , V ) ^ { - 1 } ; \\
			& { t }^{-1}  \left( \frac { \sqrt { t } } { \rho ( x ) } \right) ^ { m_{0} } ,
			& \sqrt { t } \geq m ( x , V ) ^ { - 1 } .
		\end{aligned}
		\right.
	\end{equation*}
\end{lemma}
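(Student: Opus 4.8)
\medskip
\noindent\textbf{Proof proposal.} Write $s=\sqrt{t}$ and $\rho(x)=1/m(x,V)$, and let $I$ denote the left-hand side. The plan is to decompose $\mathbb{R}^{n}$ into dyadic annuli centred at $x$ and then combine the Gaussian decay with the scaling properties of the potential recorded in Lemmas \ref{special mV} and \ref{xiao V 2}. Setting $A_{0}=B(x,s)$ and $A_{k}=B(x,2^{k}s)\setminus B(x,2^{k-1}s)$ for $k\geq1$, and using that $|x-y|^{2}/t\geq4^{k-1}$ on $A_{k}$, I would first reduce the estimate to
\begin{equation*}
	I\;\lesssim\;\frac{1}{s^{n}}\sum_{k\geq0}e^{-c4^{k-1}}\int_{B(x,2^{k}s)}V(y)\,dy ,
\end{equation*}
and then bound the mass $\int_{B(x,2^{k}s)}V$ in two regimes according to whether $2^{k}s\lesssim\rho(x)$ or $2^{k}s\gtrsim\rho(x)$.

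In the case $s<\rho(x)$, for the indices with $2^{k}s\leq\rho(x)$ I would apply Lemma \ref{special mV}(1) with $r=2^{k}s$, $R=\rho(x)$, together with the normalisation $\rho(x)^{2-n}\int_{B(x,\rho(x))}V=1$ from (\ref{def m(x,V)}), to get $\int_{B(x,2^{k}s)}V\lesssim(2^{k}s)^{n-2}(2^{k}s/\rho(x))^{\delta'}$ with $\delta'=2-n/q>0$; summing these against the Gaussian weight gives a contribution $\lesssim s^{-2}(s/\rho(x))^{\delta'}$, the series in $k$ converging since the super-exponential factor beats any power of $2^{k}$. For the remaining indices $2^{k}s>\rho(x)$ I would instead use Lemma \ref{xiao V 2} to get $\int_{B(x,2^{k}s)}V\lesssim(2^{k}s)^{n-2}(2^{k}s/\rho(x))^{m_{0}}$; because $2^{k}\gtrsim\rho(x)/s\geq1$ here, the Gaussian factor $e^{-c4^{k-1}}$ already supplies a term of size $\lesssim e^{-c'(\rho(x)/s)^{2}}\lesssim(s/\rho(x))^{\delta'}$, which absorbs the growing power and leaves a convergent tail. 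Collecting both pieces yields $I\lesssim t^{-1}(\sqrt{t}/\rho(x))^{\delta'}$.

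In the case $s\geq\rho(x)$, every ball $B(x,2^{k}s)$ has radius at least $\rho(x)$, so Lemma \ref{xiao V 2} applies to each term directly and gives $\int_{B(x,2^{k}s)}V\lesssim 2^{k(n-2+m_{0})}s^{n-2}(s/\rho(x))^{m_{0}}$; inserting this and summing the (plainly convergent) series $\sum_{k\geq0}e^{-c4^{k-1}}2^{k(n-2+m_{0})}$ gives $I\lesssim t^{-1}(\sqrt{t}/\rho(x))^{m_{0}}$, as required.

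I expect the main obstacle to be the second half of the first case: over the large balls $B(x,2^{k}s)$ with $2^{k}s>\rho(x)$ one cannot control the mass of $V$ by a \emph{negative} power of $2^{k}s/\rho(x)$, only by the \emph{growing} power $(2^{k}s/\rho(x))^{m_{0}}$ from Lemma \ref{xiao V 2}, so the gain $(\sqrt{t}/\rho(x))^{\delta'}$ must be extracted entirely from the doubly-exponential Gaussian tail. Verifying that this works uniformly, i.e. that $\sum_{j\geq0}e^{-c'u^{2}4^{j}}2^{j(n-2+m_{0})}\lesssim u^{-(n-2+\delta')}$ for $u=\rho(x)/s\geq1$ with constants independent of $u$ (one may split off the $j=0$ factor $e^{-c'u^{2}4}$, which already decays faster than any power of $u$, leaving a $u$-independent convergent sum), is the only spot that needs a little care; everything else is a routine dyadic summation.
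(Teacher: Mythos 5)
Your proof is correct. Note first that the paper states this lemma without proof, citing it as Lemma~6 of the reference~\cite{li}, so there is no internal argument to compare against; your derivation is a valid self-contained one. The dyadic annulus decomposition $A_{k}=B(x,2^{k}\sqrt{t})\setminus B(x,2^{k-1}\sqrt{t})$, combined with Lemma~\ref{special mV}(1) on the small balls ($2^{k}\sqrt{t}\leq\rho(x)$, where the normalization $\rho(x)^{2-n}\int_{B(x,\rho(x))}V=1$ from (\ref{def m(x,V)}) yields the gain $(2^{k}\sqrt{t}/\rho(x))^{2-n/q}$) and Lemma~\ref{xiao V 2} (or equivalently Lemma~\ref{special mV}(3)) on the large balls, is exactly the standard route to this estimate. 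The one delicate point you flagged — extracting the decaying factor $(\sqrt{t}/\rho(x))^{\delta'}$ in the range $2^{k}\sqrt{t}>\rho(x)$ when $\sqrt{t}<\rho(x)$ — is handled correctly: re-indexing by $j$ with $2^{k}\sim(\rho(x)/\sqrt{t})\,2^{j}$ and writing $u=\rho(x)/\sqrt{t}\geq1$, the inequality $u^{2}4^{j}\geq u^{2}+(4^{j}-1)$ lets one split the Gaussian as $e^{-c'u^{2}4^{j}}\leq e^{-c'u^{2}}e^{-c'(4^{j}-1)}$; the $j$-sum then converges to a $u$-independent constant and the leftover $u^{n-2+m_{0}}e^{-c'u^{2}}\lesssim u^{-\delta'}$ gives the stated power. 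The case $\sqrt{t}\geq\rho(x)$ is then the easy one, since every ball in the decomposition already has radius $\geq\rho(x)$.
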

Furthermore, by the perturbation theory in \cite[Proposition 2.12]{engel}, 
\begin{equation}\label{pertur}
	\begin{aligned}
		h _ { t } ( x , y ) - K _ { t } ^ { L } ( x , y )
		&= \int _ { 0 } ^ { t } \int _ { \mathbb { R } ^ { n } } h _ { s } ( w , y ) V ( w )
		K _ {  t - s } ^ { L} ( w , x ) d w d s \\
		&= \int _ { 0 } ^ { t / 2 } \int _ { \mathbb { R } ^ { n } } h _ { s } ( w , y ) V ( w )
		K _ {  t - s } ^ { L } ( w , x ) d w d s  \\
		& \quad + \int _ { 0 } ^ { t / 2 } \int _ { \mathbb { R } ^ { n } } h _ {  t - s } ( w , y ) V ( w )
		K _ { s } ^ { L } ( w , x ) d w d s .
	\end{aligned}
\end{equation}
It is easy to see that
$0 \leq K _ { t } ^{L} ( x , y ) \leq h _ { t } ( x , y ) $,
where $h _ { t } ( \cdot , \cdot )$ are the integral kernels of the semigroup
$\{ exp( - t L  _ { 0 } ) \} _ { t > 0 } $ on $L ^ { 2 } ( \mathbb{ R }^{n+1}_{+} )$ generated by $L _ { 0 } $, where \[ L  _ { 0 } f ( x ) = - \sum _ { i , j } \partial _ { i }
\left( a _ { i j } ( \cdot ) \partial _ { j } f \right) ( x ) .\]
In \cite[(2.18)]{hebi}, the authors showed that for $ k \in \mathbb{N}$, there exist constants $ C_{k} > 0 $ and $ c > 0 $ such that the heat kernels of $ L_{0} $ satisfy
\[\left| \partial _ { t } ^ { k } h _ { t } ( x , y ) \right|
\leq \frac { C _ { k } } { t ^ { k + n / 2 }  }
\exp \left( \frac { - | x - y | ^ { 2 } } { c t } \right) .\]
In \cite[(3.4)]{dziub 3}, Dziuba\'nski further derived a smoothness estimate for $ h _{t} ( \cdot , \cdot ) $:
\begin{equation} \label{smo ht}
	\left| h _ { t } ( x , y ) - h _ { t } ( x , z ) \right|
	\lesssim { t ^ { - n / 2 } } ( | y - z | / \sqrt { t } ) ^ { v }
	\exp \left( - c ( | x - y | - 2 | y - z | ) _ { + } ^ { 2 } /  t \right) ,
\end{equation}
where the constants $ v $, $ c $, $ C $ are positive.
It follows from \cite[Theorem 2.1]{hebi} that
\[	\int_{\mathbb{R}^n} h_{t}(x , y) dy = 1.\]
Additionally, for some nonnegative constants $c _ { 1 } , c _ { 2 } , C _ { 1 }$ and $C _ { 2 } $, the kernels $h _ { t } ( \cdot , \cdot )$ satisfy the Gaussian estimates {\rm \cite [Theorem 2.7] {hebi} }:
\begin{equation}\label{def ht}
	\frac{c_{1}}{t^{n/2}}  \exp\left( - c_{2} {| x - y | ^ { 2 }} / { t } \right)
	\leq h_{t}\left(x,y\right) \leq \frac{C_{1}}{t^{n/2}}  \exp\left( - C_{2} {| x - y | ^ { 2 }} / { t } \right).
\end{equation}
Then we conclude that the kernel $K _ { t } ^{L} ( \cdot , \cdot )$ has a Gaussian upper bound. Furthermore, Dziuba\'{n}ski \cite{dziub 3} proved that
\begin{proposition}{\rm (\cite [Theorem 2.2] {dziub 3})}\label{K bdd}
	For $N > 0 $, there exist constants $ { C } _ { N }$ and $c $ such that
	\begin{equation} \label{equ K bdd}
		0 \leq K _ { t } ^ { L } ( x , y )  \leq \frac { C _ { N } \exp\left( -  { c | x - y | ^ { 2 }} / { t } \right) } { t ^ { n / 2 } \left( 1 +  { \sqrt { t } } / { \rho ( x ) } + { \sqrt { t } } / { \rho ( y ) } \right) ^ {  N }}  ,  \;  x , y \in \mathbb { R } ^ { n } .
	\end{equation}
\end{proposition}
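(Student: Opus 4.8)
The plan is to derive \ref{equ K bdd} --- which is \cite[Theorem 2.2]{dziub 3} --- from the perturbation identity (\ref{pertur}), the Gaussian bounds (\ref{def ht}), and the properties of $m(\cdot,V)$ collected in Lemmas \ref{special mV}, \ref{xiao V 2} and \ref{lemma}. First, the lower bound $K_t^L(x,y)\ge 0$ and the plain Gaussian bound $K_t^L(x,y)\le C t^{-n/2}\exp(-c|x-y|^2/t)$ are already available from the domination $0\le K_t^L\le h_t$ together with (\ref{def ht}); this settles (\ref{equ K bdd}) whenever $\sqrt t\le\min\{\rho(x),\rho(y)\}$, since then the denominator $(1+\sqrt t/\rho(x)+\sqrt t/\rho(y))^{N}$ is comparable to $1$. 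So the whole task is to gain that factor when $\sqrt t$ is large compared with $\rho(x)$ or $\rho(y)$.

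My first step would be to reduce matters to the one-variable ``zeroth order'' estimate
\[
  w_t(x):=\int_{\mathbb{R}^n}K_t^L(x,y)\,dy\;\le\;C_N\bigl(1+\sqrt t\,m(x,V)\bigr)^{-N},\qquad t>0,\ N>0 .
\]
Indeed, writing $K_t^L(x,y)=\iint_{\mathbb{R}^{2n}}K_{t/3}^L(x,z_1)\,K_{t/3}^L(z_1,z_2)\,K_{t/3}^L(z_2,y)\,dz_1\,dz_2$ and bounding the middle factor by $h_{t/3}(z_1,z_2)\le C t^{-n/2}$, the two outer factors integrate to $w_{t/3}$, so $K_t^L(x,y)\lesssim t^{-n/2}w_{t/3}(x)\,w_{t/3}(y)$; combining this with $K_t^L(x,y)\le h_t(x,y)\lesssim t^{-n/2}\exp(-c|x-y|^2/t)$ via $K_t^L=(K_t^L)^{1/2}(K_t^L)^{1/2}$ gives $K_t^L(x,y)\lesssim t^{-n/2}\exp(-\tfrac c2|x-y|^2/t)\sqrt{w_{t/3}(x)w_{t/3}(y)}$, and the zeroth order bound together with $(1+a)^{-N}(1+b)^{-N}\le(1+a+b)^{-N}$ then yields (\ref{equ K bdd}).

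The core is the bound on $w_t=e^{-tL}1$ (here $1$ denotes the constant function), which satisfies $0\le w_t\le 1$; integrating (\ref{pertur}) over the free variable gives the identity $1-w_t(x)=\int_0^t\!\int_{\mathbb{R}^n}V(w)\,K_s^L(x,w)\,dw\,ds$. From this and Lemma \ref{lemma} (using $K_s^L\le h_s$) one first obtains $w_s(z)\ge\tfrac12$ whenever $\sqrt s\lesssim\rho(z)$; then, using the lower Gaussian bound in (\ref{def ht}), the doubling property (\ref{equ doubling}), the normalization $r^{2-n}\int_{B(x,r)}V=1$ at $r=\rho(x)$ from (\ref{def m(x,V)}), and the local constancy $\rho(z)\sim\rho(x)$ for $|x-z|\lesssim\rho(x)$ (Lemma \ref{special mV}(2)), one checks that over a time block of length $\simeq\rho(x)^2$ the quantity $w$ decreases by a fixed factor $\theta\in(0,1)$ independent of $x$. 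Chaining $\simeq\sqrt t\,m(x,V)$ such blocks through the semigroup law --- keeping track both of the spatial spreading of the kernels (via the Gaussian bounds (\ref{def ht})) and of the slow variation of $\rho$ along the chain (via Lemma \ref{special mV}(2)--(3) and Lemma \ref{xiao V 2}) --- then produces decay of exponential type $\exp(-c\sqrt t\,m(x,V))$, hence $(1+\sqrt t\,m(x,V))^{-N}$ for every $N$. (Equivalently, this step can be run through the Feynman--Kac representation of $w_t$ for the Markov process generated by $L_0$, via Khasminskii's lemma.)

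The hard part is exactly this chaining for large $t$: a term-by-term estimate of the perturbation series generated by (\ref{pertur}) is \emph{not} enough, because by Lemma \ref{lemma} a single correction term already grows like $(\sqrt t/\rho(x))^{m_0}$, so one must exploit the alternating structure of the series --- equivalently, the per-scale contraction built from the semigroup law --- to see the true decay. A secondary, routine, point is propagating the Gaussian factor $\exp(-c|x-y|^2/t)$ through the semigroup compositions and through the time cut-offs in (\ref{pertur}); this is handled by the elementary bound $\int_{\mathbb{R}^n}t^{-n}\exp(-c|x-z|^2/t)\exp(-c|z-y|^2/t)\,dz\lesssim t^{-n/2}\exp(-\tfrac c2|x-y|^2/t)$ and by splitting spatial integrals at $|x-w|=|x-y|/2$. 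The complete argument is carried out in \cite[Theorem 2.2]{dziub 3}.
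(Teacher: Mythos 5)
This proposition is cited directly from \cite[Theorem 2.2]{dziub 3} without proof, so there is no internal argument in the paper to compare against. Your reconstruction follows the structure of Dziuba\'nski's and Shen's arguments: reduce to the one-variable bound on $w_t=e^{-tL}\mathbf 1$ via the triple semigroup factorisation and the geometric-mean trick $K_t^L=(K_t^L)^{1/2}(K_t^L)^{1/2}$ (clean and correct, including the elementary step $(1+a)^{-N}(1+b)^{-N}\le(1+a+b)^{-N}$), then extract a definite contraction over one $\rho$-block from the perturbation identity and iterate using the slow variation of $m(\cdot,V)$. Two small points worth flagging: the one-block contraction silently needs a \emph{lower} bound $K_s^L\gtrsim h_s$ for $\sqrt s\lesssim\rho(x)$ before the lower Gaussian bound on $h_s$ can be used --- this follows from the same small-time perturbation estimate that gives $w_s\ge\tfrac12$, but it deserves a sentence; and the Khasminskii aside is slightly misdirected, since Khasminskii's lemma bounds $E[e^{+\int_0^t V(X_s)\,ds}]$ for Kato-small $V$ (no blow-up), whereas what is needed here is a lower bound on the occupation functional $\int_0^t V(X_s)\,ds$. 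The block count ``$\simeq\sqrt t\,m(x,V)$'' is also heuristic: at constant $\rho$ one would cover $[0,t]$ with $\sim t\,m(x,V)^2$ blocks of length $\rho(x)^2$, and the growth of $\rho$ along the chain (controlled by Lemma~\ref{special mV}(2)--(3)) is exactly what brings the effective count down to something polynomial rather than quadratic in $\sqrt t\,m(x,V)$ --- but any block count growing without bound in $\sqrt t\,m(x,V)$ defeats the target weight $(1+\sqrt t\,m(x,V))^{-N}$, so the conclusion is unaffected. You correctly single out the chaining as the crux and defer its details to \cite{dziub 3}; as an outline, the sketch is adequate.
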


\begin{proposition}{\rm (\cite [Proposition 3.2] {huang 1})}\label{K smo}
	For every $0 < \delta  < \delta _ { 0 } = \min \{ 1 , 2 - n / q , v \}$ and every $N > 0$, there exist constants $ { C } _ { N } > 0$ and $c $ such that for $| h |<\min\{ \sqrt{t}, |x-y|/2\} $,
	\begin{equation} \label{equ K smo}
		\left| K _ { t } ^ { L } ( x + h , y ) - K _ { t } ^ { L } ( x , y ) \right| \leq \frac { C _ { N } \left(  { | h | } / { \sqrt { t } } \right) ^ { \delta  } \exp\left( -  { c | x - y | ^ { 2 }} / { t } \right) } { t ^ { n / 2 } \left( 1 +  { \sqrt { t } } / { \rho ( x ) } + { \sqrt { t } } / { \rho ( y ) } \right) ^ {  N }}   .
	\end{equation}
\end{proposition}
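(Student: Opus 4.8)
The plan is to establish \eqref{equ K smo} by a case split according to the size of $\sqrt t$ relative to $\rho(x)$ and $\rho(y)$. I begin with the routine reductions: since $|h|<|x-y|/2$ one has $|x+h-y|\geq|x-y|/2$, so by the triangle inequality and Proposition \ref{K bdd} it suffices to produce the right-hand side of \eqref{equ K smo} with $\exp(-c|x-y|^2/t)$ for some smaller $c>0$, and the degenerate ranges ($|h|$ comparable to $\sqrt t$, or $|x-y|$ small compared to $\sqrt t$) are immediate from Proposition \ref{K bdd}.

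\textbf{Local regime} $\sqrt t\leq c_0\min\{\rho(x),\rho(y)\}$. Here the factor $(1+\sqrt t/\rho(x)+\sqrt t/\rho(y))^{-N}$ is $\sim1$, so it is enough to prove $|K_t^L(x+h,y)-K_t^L(x,y)|\lesssim t^{-n/2}(|h|/\sqrt t)^{\delta}\exp(-c|x-y|^2/t)$. I would use the perturbation identity \eqref{pertur}, rearranged (using symmetry of $h_t$) as $K_t^L(x,y)=h_t(x,y)-\int_0^t\!\int_{\mathbb R^n}h_{t-s}(x,w)V(w)K_s^L(w,y)\,dw\,ds$, so that $x$ sits only inside $L_0$-kernels, and take the $x$-increment. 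The increment of $h_t$ is controlled by \eqref{smo ht}. In the integral I split $\int_0^t=\int_0^{t/2}+\int_{t/2}^t$: on $(0,t/2)$ the increment of $h_{t-s}$ is at scale $\sim t$, so \eqref{smo ht} with $|h|\ll\sqrt t$ dominates it by a constant multiple of $(|h|/\sqrt t)^{v}h_{t-s}(x,w)$, and after $K_s^L\leq h_s$ the piece is $\lesssim(|h|/\sqrt t)^{v}\bigl[h_t(x,y)-K_t^L(x,y)\bigr]\leq(|h|/\sqrt t)^{v}h_t(x,y)$ by \eqref{pertur}; on $(t/2,t)$, writing $\tau=t-s$, I use \eqref{smo ht} when $|h|<\sqrt\tau$ and the crude bound $h_\tau(x+h,w)+h_\tau(x,w)$ when $|h|\geq\sqrt\tau$, then localise $w$ near $x$ via the Gaussian at scale $\tau$ and invoke Lemma \ref{lemma} to get $\tau^{-n/2}\!\int\exp(-c|x-w|^2/\tau)V(w)\,dw\lesssim\tau^{-1}(\sqrt\tau/\rho(x))^{\delta'}$; the remaining $\tau$-integral, whose lower endpoint is $|h|^2$, contributes $(|h|/\rho(x))^{\delta'}\leq(|h|/\sqrt t)^{\delta'}$. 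Summing the three contributions gives the estimate with $\delta=\delta_0=\min\{1,2-n/q,v\}$.

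\textbf{Global regime}, say $\sqrt t>c_0\rho(x)$ (the subcase $\sqrt t>c_0\rho(y)$ is symmetric). The obstacle is that $h_t(x+h,y)-h_t(x,y)$ is by itself much larger than the target, so the cancellation coming from the potential term must be exploited at once rather than after differencing. I would therefore drop \eqref{pertur} and use the semigroup identity $K_t^L(x,y)=\int_{\mathbb R^n}K_\tau^L(x,w)\,K_{t-\tau}^L(w,y)\,dw$ with $\tau\sim\rho(x)^2\leq t/2$. The $x$-increment now lands on $K_\tau^L(\cdot,w)$, which — since $\sqrt\tau\sim\rho(x)\sim\rho(w)$ for $|w-x|\lesssim\rho(x)$ by part (2) of Lemma \ref{special mV} — is controlled by the local-regime estimate just proved, while Proposition \ref{K bdd} provides the decay $(1+\sqrt{t-\tau}/\rho(w))^{-N}$ for $K_{t-\tau}^L(w,y)$. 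The Gaussian at scale $\tau$ localises $w$ to $|w-x|\lesssim\rho(x)$, where $\rho(w)\sim\rho(x)$ and $\exp(-c|w-y|^2/(t-\tau))\lesssim\exp(-c|x-y|^2/t)$; integrating in $w$ I obtain $|K_t^L(x+h,y)-K_t^L(x,y)|\lesssim(|h|/\rho(x))^{\delta}t^{-n/2}(1+\sqrt t/\rho(x))^{-N}\exp(-c|x-y|^2/t)$. Writing $(|h|/\rho(x))^{\delta}=(|h|/\sqrt t)^{\delta}(\sqrt t/\rho(x))^{\delta}$ and absorbing $(\sqrt t/\rho(x))^{\delta}$ into the weight by using exponent $N+\delta$ in Proposition \ref{K bdd}, this becomes the desired bound with $(1+\sqrt t/\rho(x))^{-N}$; the passage to the symmetric $(1+\sqrt t/\rho(x)+\sqrt t/\rho(y))^{-N}$ follows from part (2) of Lemma \ref{special mV} (together with Lemma \ref{xiao V 2}) when $|x-y|\lesssim\sqrt t$, and by trading a fraction of $\exp(-c|x-y|^2/t)$ when $|x-y|\gg\sqrt t$.

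\textbf{Main obstacle.} I expect the real difficulty to be precisely the production of the decay factor $(1+\sqrt t/\rho(x)+\sqrt t/\rho(y))^{-N}$: the unperturbed kernel $h_t$ carries no such decay, so it cannot be obtained by simply differencing $h_t$ and estimating the perturbation term separately. The splitting of $K_t^L$ at the intermediate scale $\rho(x)^2$ is what encodes the required cancellation, and it relies crucially on the already-established weighted Gaussian bound (Proposition \ref{K bdd}) and on the fine behaviour of the auxiliary function $m(\cdot,V)$ (Lemma \ref{special mV}). The remaining work — the time-scale splitting, the elementary Gaussian convolution estimates, and the bookkeeping of the various subcases in $|h|$, $|x-y|$, $\sqrt t$ — is routine but somewhat lengthy.
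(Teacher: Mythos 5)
This statement is quoted in the paper as \cite[Proposition~3.2]{huang 1} and is not proved here, so there is no in-paper proof to compare your argument against; the following is an assessment of your proposal on its own terms.

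Your overall architecture is the right one and is the standard route to such weighted Lipschitz bounds: difference the Duhamel identity \eqref{pertur} against the smoothness estimate \eqref{smo ht} for $h_t$ to obtain the unweighted local estimate when $\sqrt t\lesssim\rho(x)$, then propagate the local estimate through the Chapman--Kolmogorov identity at the intermediate scale $\tau\sim\rho(x)^2$ and extract the weight from Proposition~\ref{K bdd} applied to the $K_{t-\tau}^L(w,y)$ factor. Your observations that the local perturbation step only uses $\sqrt\tau\lesssim\rho(x)$ (not $\rho(w)$), that the subcase $|h|\gtrsim\rho(x)$ in the global regime is trivial from Proposition~\ref{K bdd} with exponent $N+\delta$, and that the $\rho(y)$-factor is restored via Lemma~\ref{special mV} are all correct and are exactly where the decay is earned.

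There is, however, one step in the local regime that does not hold as stated. On the piece $s\in(0,t/2)$ you claim that \eqref{smo ht} gives the pointwise domination $|h_{t-s}(x+h,w)-h_{t-s}(x,w)|\lesssim(|h|/\sqrt t)^{v}\,h_{t-s}(x,w)$, so that the whole integral collapses to $(|h|/\sqrt t)^{v}[\,h_t(x,y)-K_t^L(x,y)\,]$ by \eqref{pertur}. This fails: the Gaussian exponent $c$ in \eqref{smo ht} is, in general, strictly smaller than the constant $C_2$ in the two-sided bound \eqref{def ht}, so for $|x-w|\gg\sqrt{t-s}$ one has $\exp\bigl(-c(|x-w|-2|h|)_+^2/(t-s)\bigr)\gg\exp\bigl(-C_2|x-w|^2/(t-s)\bigr)$ and the increment is \emph{not} dominated by a multiple of $h_{t-s}(x,w)$. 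The conclusion you want is nonetheless reachable by replacing this shortcut with the same computation you already carry out on $(t/2,t)$: after \eqref{smo ht} one has a Gaussian $(t-s)^{-n/2}\exp(-c'|x-w|^2/(t-s))$ (with a possibly worse constant $c'$), one splits this Gaussian into a localising half and a half fed to Lemma~\ref{lemma} at scale $t-s\sim t$, integrates out $w$ against $K_s^L(w,y)\le h_s(w,y)$, and finally integrates $ds$ over $(0,t/2)$, which contributes the same $(|h|/\sqrt t)^v t^{-n/2}\exp(-c''|x-y|^2/t)$. Apart from this repairable misstep, and the unstated bookkeeping of Gaussian convolutions and of the $|x-w|>\rho(x)$ tail in the global regime (both routine), the proposal is sound.
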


Let $ Q  _ { t , m } ^ { L } ( x , y ) : = t ^ { m } \partial _ { t } ^ {m} K _ { t } ^ { L } ( x , y ) $, $m \in \mathbb { Z } _ { + } $. To characterize the Hardy type spaces related with $L$, in \cite{huang 1}, Huang, Li and Liu obtained the following regularity estimate of $K_{t}^{L}(\cdot,\cdot)$.
\begin{proposition}{\rm (\cite [Proposition 3.3] {huang 1})}\label{Q bdd}
\item[\rm(i)] For every $N > 0$, there exist constants $C _ { N } > 0$ and $c > 0$ such that
\[
\left| Q _ { t , m } ^ { L } ( x , y ) \right| \leq \frac { C _ { N } \exp\left( -  { c | x - y | ^ { 2 }} / { t } \right) } { t ^ { n / 2 } \left( 1 +  { \sqrt { t } } / { \rho ( x ) } + { \sqrt { t } } / { \rho ( y ) } \right) ^ {  N }} .
\]
\item[\rm(ii)] Let $0 < \delta \leq \delta _ { 0 }$, where $\delta _ { 0 }$ appears in Proposition \ref{K smo}. For every $N > 0$, there exist constants $C _ { N } > 0$ and $c > 0 $ such that for $| h | < \sqrt { t } $, 
\[
\left| Q _ { t , m } ^ { L } ( x + h , y ) - Q _ { t , m } ^ { L } ( x , y ) \right|  \leq \frac { C _ { N } \left(  { | h | } / { \sqrt { t } } \right) ^ { \delta  } \exp\left( -  { c | x - y | ^ { 2 }} / { t } \right) } { t ^ { n / 2 } \left( 1 +  { \sqrt { t } } / { \rho ( x ) } + { \sqrt { t } } / { \rho ( y ) } \right) ^ {  N }}   .
\]
\item[\rm(iii)] Let $0 < \delta \leq \delta _ { 0 }$, where $\delta _ { 0 }$ appears in Proposition \ref{K smo}. For every $N > 0$, there exists a constant $C _ { N } > 0 $ such that
\[
\left| \int _ { \mathbb { R } ^ { n } } Q _ { t , m } ^ { L } ( x , y ) d y \right|
\leq C _ { N } \left(  { \sqrt { t } } / { \rho ( x ) } \right) ^ { \delta  } \left( 1 +  { \sqrt { t } } / { \rho ( x ) } \right) ^ { - N } .
\]
\end{proposition}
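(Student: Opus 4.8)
The plan is to deduce (i) and (ii) from the analyticity of $\{exp(-tL)\}_{t>0}$ via the Cauchy integral formula, and to deduce (iii) from the conservation property $\int_{\mathbb R^n}h_t(x,y)\,dy=1$ together with a recursion in $m$. Since $L=-div(A(x)\cdot\nabla)+V$ is self-adjoint and nonnegative on $L^2(\mathbb R^n)$, the semigroup $\{exp(-tL)\}$ extends to a bounded analytic semigroup on $\{\mathrm{Re}\,z>0\}$, so for fixed $x,y$ the function $z\mapsto K_z^L(x,y)$ is holomorphic there, and for $|z-t|\le t/2$ I would first establish the complex-time analogue of Proposition \ref{K bdd},
\[
|K_z^L(x,y)|\le\frac{C_N\exp(-c|x-y|^2/t)}{t^{n/2}\bigl(1+\sqrt t/\rho(x)+\sqrt t/\rho(y)\bigr)^N}.
\]
On that disk $\mathrm{Re}\,z$ and $|z|$ are comparable to $t$ and $|\arg z|\le\pi/4$; the Gaussian bound for the free kernel $h_z$ there is the classical analytic extension of (\ref{def ht}), the perturbation identity (\ref{pertur}) persists for complex time by analytic continuation, and Dziuba\'nski's iteration of (\ref{pertur}) proving Proposition \ref{K bdd} — which uses only Lemma \ref{lemma} and Lemma \ref{special mV} — goes through with $z$ in place of $t$.

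Granting this, (i) follows at once: by the Cauchy integral formula
\[
\partial_t^m K_t^L(x,y)=\frac{m!}{2\pi i}\int_{|z-t|=t/2}\frac{K_z^L(x,y)}{(z-t)^{m+1}}\,dz,
\]
and bounding the integrand by the displayed estimate, the length of the contour by $\pi t$ and $|z-t|^{m+1}$ by $(t/2)^{m+1}$, one gets $|\partial_t^m K_t^L(x,y)|\lesssim t^{-m}C_N\exp(-c|x-y|^2/t)\,t^{-n/2}(1+\sqrt t/\rho(x)+\sqrt t/\rho(y))^{-N}$; multiplying by $t^m$ gives (i). For (ii) I would first record, in the same way (running the proof of Proposition \ref{K smo}, which uses only (\ref{smo ht}), (\ref{pertur}) and Lemma \ref{lemma}), the complex-time bound
\[
|K_z^L(x+h,y)-K_z^L(x,y)|\le\frac{C_N(|h|/\sqrt t)^\delta\exp(-c|x-y|^2/t)}{t^{n/2}\bigl(1+\sqrt t/\rho(x)+\sqrt t/\rho(y)\bigr)^N}
\]
for $|h|<\sqrt t$: the range $|h|<|x-y|/2$ is Proposition \ref{K smo} complexified, while for $|x-y|/2\le|h|<\sqrt t$ one has $|x-y|^2/t<4$, so the exponential is bounded below by a constant and it suffices to prove genuine H\"older continuity across the diagonal, $|K_z^L(x+h,y)-K_z^L(x,y)|\lesssim(|h|/\sqrt t)^\delta t^{-n/2}(1+\sqrt t/\rho(x))^{-N}$, which comes from the \emph{unrestricted} estimate (\ref{smo ht}) (note $\delta\le v$) propagated through (\ref{pertur}) and Lemma \ref{lemma}, using Lemma \ref{special mV}(2) to replace $\rho(x+h)$ by $\rho(x)$ at the cost of enlarging $N$. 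Feeding this into the Cauchy integral formula applied to the difference $K_z^L(x+h,y)-K_z^L(x,y)$ yields (ii).

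For (iii), note first that $\int_{\mathbb R^n}h_t(x,y)\,dy=1$ is independent of $t$, so $\int_{\mathbb R^n}Q_{t,m}^{L_0}(x,y)\,dy=0$ for $m\ge1$; hence the estimate is really a statement about the perturbation by $V$. Writing $u(t,x)=\int_{\mathbb R^n}K_t^L(x,y)\,dy=exp(-tL)\mathbf 1(x)$ and using $\partial_t\,exp(-tL)=-L\,exp(-tL)$ together with $L\mathbf 1=L_0\mathbf 1+V\mathbf 1=V$ (since $L_0\mathbf 1=0$), one gets $\partial_t^m u=(-1)^m L^{m-1}exp(-tL)V$; since $L^{m-1}exp(-tL)$ has kernel $(-1)^{m-1}t^{-(m-1)}Q_{t,m-1}^L(x,w)$, this gives the recursion
\[
\int_{\mathbb R^n}Q_{t,m}^L(x,y)\,dy=-t\int_{\mathbb R^n}Q_{t,m-1}^L(x,w)\,V(w)\,dw,\qquad m\ge1,
\]
with $Q_{t,0}^L=K_t^L$. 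Bounding $Q_{t,m-1}^L$ by part (i) (or by Proposition \ref{K bdd} when $m=1$), discarding $(1+\sqrt t/\rho(w))^{-N}\le1$, pulling out $(1+\sqrt t/\rho(x))^{-N}$, and applying Lemma \ref{lemma} to $t^{-n/2}\int_{\mathbb R^n}\exp(-c|x-w|^2/t)V(w)\,dw$ gives $\lesssim t\cdot t^{-1}(\sqrt t/\rho(x))^{\delta}(1+\sqrt t/\rho(x))^{-N}$ when $\sqrt t<\rho(x)$ and, when $\sqrt t\ge\rho(x)$, a power $(\sqrt t/\rho(x))^{m_0}(1+\sqrt t/\rho(x))^{-N}$ that is absorbed into the desired form by taking $N$ large; relabelling the exponents produces (iii). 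I expect the main obstacle to be the first step for (i) and (ii): transferring the pointwise and H\"older bounds for $K_t^L$ to complex time $z$ in the disk $\{|z-t|\le t/2\}$ while retaining the $\rho$-decay — the analytic extension of the Gaussian bound for $h_z$ is classical, but carrying the decay through the perturbation iteration for complex parameters, though conceptually routine, is the technically delicate point; the secondary difficulty is the diagonal regime $|x-y|\lesssim|h|$ in (ii), which is precisely why (\ref{smo ht}) rather than Proposition \ref{K smo} is needed there.
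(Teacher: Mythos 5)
The paper cites this proposition verbatim from \cite[Proposition 3.3]{huang 1} and supplies no in-house proof, so there is no argument in the manuscript to compare against. Your sketch is a sound reconstruction of the standard proof and almost certainly reflects what the cited paper does: Cauchy's integral formula on $\{|z-t|=t/2\}$ for parts (i) and (ii), granted complex-time analogues of Propositions \ref{K bdd} and \ref{K smo}, and for (iii) the recursion obtained by differentiating in $t$ the mass identity $1-\int_{\mathbb{R}^{n}}K_t^L(x,y)\,dy=\int_0^t\int_{\mathbb{R}^{n}}V(w)K_{t-s}^L(w,x)\,dw\,ds$, which follows by integrating (\ref{pertur}) in $y$ and using $\int h_s(\cdot,y)\,dy=1$. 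Your identity $\int_{\mathbb{R}^{n}}Q_{t,m}^L(x,y)\,dy=-t\int_{\mathbb{R}^{n}}Q_{t,m-1}^L(x,w)V(w)\,dw$ is correct, and I would derive it from the perturbation formula in this way rather than from $L\mathbf{1}=V$, since $\mathbf{1}\notin L^2(\mathbb{R}^{n})$ and some care is otherwise needed to justify $L\,exp(-tL)\mathbf{1}=exp(-tL)L\mathbf{1}$.

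Two points should be treated as genuine work rather than formality. First, the complex-time estimates: analytic continuation carries the identity (\ref{pertur}), not the inequalities of Propositions \ref{K bdd} and \ref{K smo}, so the Gaussian bound with $\rho$-decay on the disk $|z-t|\le t/2$ has to be re-derived by rerunning Dziuba\'nski's iteration of (\ref{pertur}) with $z$ in place of $t$, starting from the sectorial Gaussian bound for $h_z$ and checking that Lemma \ref{lemma} and Lemma \ref{special mV} enter only through $\mathrm{Re}\,z\asymp t$; you flag this yourself, and that assessment is accurate. Second, in (iii), Lemma \ref{lemma} produces a single fixed exponent $\delta'$, whereas the conclusion is asserted for every $0<\delta\le\delta_0$; your bound transfers only because $\delta'\ge\delta_0$, so that $(\sqrt t/\rho(x))^{\delta'}\le(\sqrt t/\rho(x))^{\delta}$ when $\sqrt t<\rho(x)$, while for $\sqrt t\ge\rho(x)$ the $m_0$-power is absorbed by enlarging $N$ — a monotonicity check that ``relabelling the exponents'' glosses over. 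Neither point is a wrong turn, but both need to be made explicit in a complete argument.
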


\subsection{Fractional heat kernel associated with $L$}
This section begins by outlining the background on fractional heat semigroups and the fractional heat kernel associated with $L$. In the case where $V \neq 0$, the fractional heat semigroups for $L$ cannot be characterized via the Fourier multiplier method, which is applicable to the Laplace operator.
In \cite[page 20]{gri}, Grigor'yan proved that there exists a nonnegative continuous function $\eta _ {t} ^ { \alpha } ( \cdot )$ satisfying
\begin{equation}\label{subordinative}
	\left\{ \begin{aligned}
		&\eta _ { t } ^ { \alpha } ( s ) = \frac { 1 } { t ^ { 1 / \alpha } }
		\eta _ { 1 } ^ { \alpha } \left( s / t ^ { 1 / \alpha } \right) ; \\
		&\eta _ { t } ^ { \alpha } ( s ) \lesssim \frac { t } { s ^ { 1 + \alpha } } ,
		\forall s , t > 0 ; \\
		&\int _ { 0 } ^ { \infty } s ^ { - \gamma } \eta _ { 1 } ^ { \alpha } ( s ) d s < \infty ,
		\gamma > 0 ; \\
		&\eta _ { t } ^ { \alpha } ( s ) \simeq \frac { t } { s ^ { 1 + \alpha } } ,
		\forall s \geq t ^ { 1 / \alpha } > 0 ,
	\end{aligned} \right.
\end{equation}
such that $K _ { \alpha , t } ^ { L } ( \cdot, \cdot )$ can be expressed as
\begin{equation} \label{subor}
	K _ { \alpha, t } ^ { L } ( x , y ) = \int _ { 0 } ^ { \infty }
	\eta _ { t } ^ { \alpha } ( s ) K _ { s } ^ { L } ( x , y ) d s .
\end{equation}
Therefore, following Grigor'yan \cite{gri}, we use the subordinative formula (\ref{subor}) to express the fractional heat semigroup related with $L$.
Moreover, the Schr\"odinger operator $L$ can be regarded as the generator of the semigroup ${ exp(-tL) }_{t > 0}$, i.e.,
\[L ( f ) : = \lim _ { t \rightarrow 0 } \frac { f - e xp( - t L) f } { t } , \]
where the limit is taken in $L ^ { 2 } \left( \mathbb { R } ^ { n } \right) $.
Additionally, for $ s \in (0,\alpha) $, $ \alpha \in (0,1] $, the fractional power of $L $, denoted by $L ^ { s}$ is defined as
\begin{equation}\label{def frac L}
	L^ { s } ( f ) = C_{ s , \alpha } \int_{ 0 } ^ { \infty }
	\left(exp(-tL^{\alpha}) f(x) -f(x)\right)
	\frac{dt}{t^{1+ s / \alpha }} \quad \forall f\in L^{2} (\mathbb{ R }^{n}) .
\end{equation}

\begin{proposition} \label{bdd frac diff}  {\rm (\cite [Propositions 3.5] {wang})}
	Let $\alpha \in ( 0 , 1 )$ and $V \in B _ { q } $, $q > n / 2 $.
	\item[\rm(i)] For every $N > 0$, there exists a constant ${ C } _ { N } $ such that
	\begin{equation*}
		\left| K _ { \alpha , t } ^ { L } ( x , y ) \right| \leq \frac { C _ { N } t } { ( t ^ { 1 / (2\alpha ) } + | x - y |) ^ { n + 2 \alpha } } \left( 1 + \frac { t ^ { 1 / (2\alpha) } } { \rho ( x ) } + \frac { t ^ { 1 / (2\alpha) } } { \rho ( y ) } \right) ^ { - N } .
	\end{equation*}
	\item[\rm(ii)] For any $N > 0$, there exists a constant ${ C } _ { N } $ such that for every $0 < \delta < \delta _ { 0 } = \min \{ 1 , 2 - n / q , v \}$ and all $| h | \leq t ^ { 1 / (2 \alpha) } $,
	\begin{align*}
	 \left| K _ { \alpha , t } ^ { L } ( x + h , y ) - K _ { \alpha , t } ^ { L } ( x , y ) \right| 
	 \leq \frac { C _ { N } t ( | h | / t^{1/(2\alpha)}) ^ { \delta }  }
		{ ( t ^ { 1 / (2 \alpha) } + | x - y | ) ^ { n + 2 \alpha } \left( 1 + \frac { t ^ { 1 / (2 \alpha) } } { \rho ( x ) } + \frac { t ^ { 1 / (2 \alpha )} } { \rho ( y ) } \right) ^ { - N }} 	  .
	\end{align*}
\end{proposition}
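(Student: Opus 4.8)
The plan is to obtain both estimates from the subordinative formula (\ref{subor}) combined with the pointwise bound of Proposition \ref{K bdd} and the smoothness bound of Proposition \ref{K smo}, by splitting the $s$-integral at the natural scale $t^{1/\alpha}$. It is cleanest to rescale first: by the scaling identity in (\ref{subordinative}), the substitution $s = t^{1/\alpha}\sigma$ turns $\eta_t^\alpha(s)\,ds$ into $\eta_1^\alpha(\sigma)\,d\sigma$, so that
\begin{equation*}
	K_{\alpha,t}^L(x,y) = \int_0^\infty \eta_1^\alpha(\sigma)\,K_{t^{1/\alpha}\sigma}^L(x,y)\,d\sigma ,
\end{equation*}
and likewise for the increment $K_{\alpha,t}^L(x+h,y) - K_{\alpha,t}^L(x,y)$. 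Throughout write $\tau = t^{1/(2\alpha)}$ and $\mu = |x-y|/\tau$. Two elementary facts do most of the work: the inequality $1 + \tau a + \tau b \le \sigma^{-1/2}(1 + \tau\sqrt\sigma\,a + \tau\sqrt\sigma\,b)$ for $0 < \sigma \le 1$, which lets me migrate the factor $(1+\sqrt s/\rho(x)+\sqrt s/\rho(y))^{-N}$ from scale $\sqrt s$ to scale $\tau$ at the price of a power $\max\{1,\sigma^{-N/2}\}$; and the fast decay of $\eta_1^\alpha$ at the origin recorded in $\int_0^\infty \sigma^{-\gamma}\eta_1^\alpha(\sigma)\,d\sigma < \infty$ for every $\gamma > 0$, which makes every small-$\sigma$ integral converge and, more importantly, is quantitative: $\int_0^{\varepsilon}\sigma^{-\gamma}\eta_1^\alpha(\sigma)\,d\sigma \le \varepsilon^{\vartheta}\int_0^\infty\sigma^{-\gamma-\vartheta}\eta_1^\alpha(\sigma)\,d\sigma$ for any $\vartheta > 0$.

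For part (i) I would insert the Gaussian bound of Proposition \ref{K bdd} into the rescaled formula, migrate the $\rho$-factor to scale $\tau$ as above, and reduce matters to showing
\begin{equation*}
	\int_0^\infty \eta_1^\alpha(\sigma)\,\sigma^{-n/2}\max\{1,\sigma^{-N/2}\}\,e^{-c\mu^2/\sigma}\,d\sigma \lesssim (1+\mu)^{-(n+2\alpha)} .
\end{equation*}
This is handled by splitting at $\sigma = 1$: on $\sigma \ge 1$ one uses $\eta_1^\alpha(\sigma) \lesssim \sigma^{-1-\alpha}$ together with, when $\mu > 1$, the substitution $\sigma = \mu^2 v$ and the convergence of $\int_0^\infty v^{-1-\alpha-n/2}e^{-c/v}\,dv$; on $\sigma \le 1$ one uses the same device when $\mu > 1$ (which produces a gain $\mu^{-(n+2\alpha)-N}$) and the integrability of $\sigma^{-\gamma}\eta_1^\alpha$ when $\mu \le 1$. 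Undoing the rescaling converts $(1+\mu)^{-(n+2\alpha)}$ into $t/(t^{1/(2\alpha)}+|x-y|)^{n+2\alpha}$, which is the asserted bound.

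For part (ii) I would split the rescaled increment at $\sigma = \nu^2$, where $\nu = |h|/\tau \le 1$. On $\sigma > \nu^2$ one has $|h| \le \tau\sqrt\sigma = \sqrt{s}$; if moreover $|x-y| > 2|h|$, Proposition \ref{K smo} applies and contributes the factor $(|h|/\sqrt s)^\delta = (\nu/\sqrt\sigma)^\delta$, from which $\nu^\delta$ is pulled out and the leftover $\sigma^{-\delta/2}$ is absorbed exactly as the extra power was absorbed in (i); if instead $|x-y| \le 2|h|$, then $\mu \le 2\nu \le 2$, the Gaussian is irrelevant, and one invokes the form of the smoothness estimate without Gaussian gain — valid for all $|h| \le \sqrt s$ — obtained by combining the increment bound (\ref{smo ht}) for $h_t$ with the perturbation identity (\ref{pertur}). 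On $0 < \sigma \le \nu^2$ one has $\sqrt s < |h|$; here I would bound $|K_s^L(x+h,y) - K_s^L(x,y)|$ by the triangle inequality and Proposition \ref{K bdd}, passing from $\rho(x+h)$ to $\rho(x)$ via Lemma \ref{special mV}(2) (which, since $|h| \le \tau$, yields $1 + \tau/\rho(x+h) \gtrsim (1+\tau/\rho(x))^{1/(1+l_0)}$, so it suffices to run Proposition \ref{K bdd} with a larger exponent). The corresponding $\sigma$-integral over $(0,\nu^2)$ is bounded, after writing $\sigma^{-\gamma} \le \nu^{\delta}\sigma^{-\gamma-\delta/2}$ there, by $\nu^\delta \int_0^\infty \eta_1^\alpha(\sigma)\sigma^{-(n+N+\delta)/2}e^{-c\mu^2/\sigma}\,d\sigma$, which is $\lesssim \nu^\delta(1+\mu)^{-(n+2\alpha)}$ by the estimates already used in (i). Summing the three contributions and undoing the rescaling gives (ii).

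The two genuine difficulties are the following. First, the time variable $s$ in (\ref{subor}) ranges over all of $(0,\infty)$ while the target bounds are pinned at the single scale $t^{1/(2\alpha)}$; reconciling the two — in particular transferring the $\rho$-dependent factor — is the crux, and it is the super-polynomial vanishing of $\eta_1^\alpha$ near $0$ that simultaneously forces convergence and delivers the precise Hölder gain $(|h|/t^{1/(2\alpha)})^\delta$. Second, in part (ii) the smoothness input of Proposition \ref{K smo} is only available under $|h| < \min\{\sqrt s, |x-y|/2\}$, so the argument must split according to whether $|x-y|$ is larger or smaller than $|h|$ and, where $\sqrt s < |h|$, fall back on the triangle inequality and part (i); keeping the $\rho$-factors aligned through all these branches is the main bookkeeping obstacle.
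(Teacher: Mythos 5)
The paper does not prove this proposition; it is imported verbatim from \cite[Proposition~3.5]{wang}, so there is no in-house argument to compare against. Judged on its own terms, your route --- rescale the subordination formula to $K_{\alpha,t}^L(x,y)=\int_0^\infty\eta_1^\alpha(\sigma)K_{t^{1/\alpha}\sigma}^L(x,y)\,d\sigma$, insert Propositions~\ref{K bdd}--\ref{K smo}, migrate the $\rho$-weight from scale $t^{1/(2\alpha)}\sqrt\sigma$ to scale $t^{1/(2\alpha)}$ at the cost of $\max\{1,\sigma^{-N/2}\}$, and absorb the loss using the super-polynomial decay of $\eta_1^\alpha$ near zero --- is the standard one, and the mechanics of part~(i) are correct, including the algebra $\tau^{-n}(1+|x-y|/\tau)^{-(n+2\alpha)}=t\,(\tau+|x-y|)^{-(n+2\alpha)}$ with $\tau=t^{1/(2\alpha)}$ and the handling of the $\sigma$-integral on $\sigma\ge1$ (power decay of $\eta_1^\alpha$ plus the substitution $\sigma=\mu^2v$) and on $\sigma\le1$ (the Gaussian kills everything when $\mu>1$, integrability of $\sigma^{-\gamma}\eta_1^\alpha$ when $\mu\le1$).

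Part~(ii) has a gap, one you identify but do not close. Proposition~\ref{K smo} is stated only under $|h|<\min\{\sqrt t,|x-y|/2\}$, and your Case~B ($|x-y|\le 2|h|$ with $\sqrt s>|h|$) requires a H\"older estimate for $K_s^L$ valid for all $|h|<\sqrt s$ regardless of the relative size of $|x-y|$. You assert that such an estimate --- without Gaussian gain, which is anyway harmless in this regime since $|x-y|\lesssim\sqrt s$ --- follows from combining (\ref{smo ht}) with the perturbation identity (\ref{pertur}); that is plausible, but it amounts to re-running the proof of Proposition~\ref{K smo} and verifying that the restriction $|h|<|x-y|/2$ is inessential, and as written it is an appeal to an unproven strengthening rather than to what the paper actually supplies. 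The triangle inequality together with part~(i) cannot substitute here: in the regime $|x-y|\le 2|h|$ the target bound still carries the strict gain $(|h|/t^{1/(2\alpha)})^{\delta}<1$ over the pure size estimate, so a genuine smoothness input is unavoidable in that branch. The remaining pieces --- the split at $\sigma=\nu^2$, the passage from $\rho(x+h)$ to $\rho(x)$ via Lemma~\ref{special mV}(2) at the cost of a power $1/(1+l_0)$ compensated by taking a larger $N$ in Proposition~\ref{K bdd}, and the absorption of $\sigma^{-\delta/2}$ by the fast vanishing of $\eta_1^\alpha$ at the origin --- are all sound.
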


\section{Regularities on fractional heat semigroups associated with $L$ } \label{sec 3}
This section is devoted to establishing pointwise estimates for the fractional heat kernel $K _ { \alpha , t } ^ { L } ( \cdot ,\cdot ) $. We begin by employing the subordination formula (\ref{subor}) to estimate $\partial _ { t } ^ { m } K _ { \alpha , t } ^ { L } ( \cdot , \cdot ) , m \geq 1 $. Subsequently, we derive the spatial gradient of
$K _ { \alpha , t } ^ { L } ( \cdot,\cdot ) $ via the solution to the parabolic equation
\begin{equation}\label{equation}
	\partial _ { t } u + L u = \partial _ { t } u - div(A(x) \cdot \nabla u)  + V u = 0 .
\end{equation}

\subsection{Estimation for $ \alpha = 1 $ }  \label{sec 3.1}

This section is devoted to analyzing the spatial gradient of $ K_t^L(\cdot, \cdot) $.

\begin{lemma} \label{x K bdd}
	Suppose that $\left(a _ { i j } ( x )\right)$ satisfies the conditions $(A1) \; \& \; (A2)$, and that $V \in B _ { q }$ for some $q > n$. For every $N > 0$, there exists a constant
	${ C } _ { N } > 0 $ such that for all $x , y  \in  \mathbb { R } ^ { n }$ and $t > 0$,
	the semigroup kernels $K _ { t } ^ { L } \left( \cdot , \cdot \right)$ satisfy
	the following estimate:
	\begin{equation}\label{u}
		\left| \nabla _ { x } K _ { t } ^ { L } ( x , y ) \right|
		+ \left| t  \nabla_{x} \partial_{ t } K_t^{L}(x, y) \right|
		\leq \frac{C _ { N } { t ^ { - ( n + 1 ) / 2 } } 	exp(- c \left| x  - y \right| ^ { 2 } / t ) }{\left( 1 +  { \sqrt { t } } / { \rho ( x ) }
			+  { \sqrt { t } } / { \rho ( y ) } \right) ^ { N }}.
	\end{equation}
\end{lemma}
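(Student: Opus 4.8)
The plan is to prove the gradient bound by exploiting the representation of $K_t^L(x,y)$ as the fundamental solution of the parabolic equation \eqref{equation} together with classical interior gradient estimates for uniformly parabolic operators, and then to recover the extra decay factor $(1+\sqrt t/\rho(x)+\sqrt t/\rho(y))^{-N}$ from the already-known Gaussian-with-decay bound in Proposition \ref{K bdd}. For the case $\sqrt t \lesssim \rho(x)$ (equivalently $\sqrt t \le m(x,V)^{-1}$), the decay factor is comparable to a constant, so it suffices to prove the plain Gaussian gradient bound $|\nabla_x K_t^L(x,y)| \lesssim t^{-(n+1)/2}\exp(-c|x-y|^2/t)$. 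For the complementary range $\sqrt t \gtrsim \rho(x)$, one bootstraps: having the Gaussian gradient bound in hand, one upgrades it to include the decay factor by using the perturbation identity \eqref{pertur}, the Gaussian gradient bound for the unperturbed kernel $h_t$ from \cite[(2.18)]{hebi}, Proposition \ref{K bdd} applied to $K_s^L$, and Lemma \ref{lemma}, which is precisely the device that converts an integral of $\exp(-c|\cdot|^2/s)V(w)$ against a Gaussian into a gain of the form $(\sqrt s/\rho(x))^{\delta'}$ or $(\sqrt s/\rho(x))^{m_0}$.

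In more detail, the core of the argument is the plain gradient estimate. Fix $y$ and set $u(x,t)=K_t^L(x,y)$, which solves $\partial_t u + Lu = 0$ for $t>0$. On a parabolic cylinder $Q_r(x_0,t_0)$ with $r \sim \sqrt{t_0}/2$ and $r \lesssim |x_0-y|/4$ (so that $u$ is a solution there), De Giorgi–Nash–Moser interior estimates for the divergence-form parabolic operator $\partial_t - \mathrm{div}(A(x)\nabla\cdot)$ — here we only need (A1), the measurability and uniform ellipticity of $A$; the lower-order term $Vu$ with $V\in B_q$, $q>n$, is handled as a subordinate perturbation using $V\in B_q\subset L^{n/2+\epsilon}_{loc}$ and the local boundedness of $u$ — give
\[
|\nabla_x u(x_0,t_0)| \lesssim \frac{1}{r}\,\sup_{Q_r(x_0,t_0)} |u| \lesssim \frac{1}{\sqrt{t_0}}\cdot \frac{C_N}{t_0^{n/2}}\exp\!\left(-\frac{c|x_0-y|^2}{t_0}\right),
\]
where the second inequality is Proposition \ref{K bdd} (used with $N=0$, together with the fact that $|x-y|\sim|x_0-y|$ throughout the cylinder) and possibly a slight shrinkage of $c$. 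When instead $|x_0-y| \lesssim \sqrt{t_0}$ one uses a cylinder of size $r\sim\sqrt{t_0}/2$ without the constraint on $|x_0-y|$; the exponential is then $\sim 1$ and the same computation applies. This proves $|\nabla_x K_t^L(x,y)| \lesssim t^{-(n+1)/2}\exp(-c|x-y|^2/t)$. The time-derivative term $|t\nabla_x\partial_t K_t^L(x,y)|$ is handled identically: $\partial_t K_t^L(\cdot,y)$ also solves the parabolic equation (differentiate in $t$), and $|t\partial_t K_t^L(x,y)| = |Q_{t,1}^L(x,y)| \lesssim t^{-n/2}\exp(-c|x-y|^2/t)$ by Proposition \ref{Q bdd}(i), so the interior gradient estimate applied to $t\partial_t K_t^L(\cdot,y)$ yields the claimed bound.

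The final step inserts the decay factor. Differentiating \eqref{pertur} in $x$,
\[
\nabla_x h_t(x,y) - \nabla_x K_t^L(x,y) = \int_0^{t/2}\!\!\int_{\mathbb{R}^n} h_s(w,y)V(w)\,\nabla_x K_{t-s}^L(w,x)\,dw\,ds + \int_0^{t/2}\!\!\int_{\mathbb{R}^n} h_{t-s}(w,y)V(w)\,\nabla_x K_s^L(w,x)\,dw\,ds .
\]
Since $\nabla_x h_t$ already satisfies the desired bound (with decay factor $\equiv 1$, which is acceptable for the regime $\sqrt t\lesssim\rho(x)$), one only needs the two integral terms. In each, bound $V(w)$ against a Gaussian in $w$ using the plain gradient estimate just proved for $\nabla_x K^L$, combine Gaussians in $w$ via the semigroup-type inequality $\exp(-c|x-w|^2/\tau)\exp(-c|w-y|^2/\sigma)\lesssim \exp(-c'|x-y|^2/(\tau+\sigma))$ after integrating one factor, and apply Lemma \ref{lemma} to the resulting $s$-integral (splitting at $\sqrt s = m(x,V)^{-1}$) to extract a power $(\sqrt t/\rho(x))^{\delta}$; iterating this perturbation bound $N$ times (or running it once with the stronger $m_0$-gain and using Lemma \ref{special mV}(2) to pass between $\rho(x)$ and $\rho(y)$) produces the factor $(1+\sqrt t/\rho(x)+\sqrt t/\rho(y))^{-N}$ for arbitrary $N$. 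The main obstacle is the first step: making the interior parabolic gradient estimate rigorous in the presence of the potential $V\in B_q$ — that is, showing the lower-order term does not spoil the scale-invariant $\tfrac1r\sup|u|$ bound — which is exactly where the hypothesis $q>n$ is used (it guarantees $V$ lies in a subcritical Lebesgue space locally, so that $Vu$ is an admissible right-hand side in the Moser iteration after rescaling). The smoothness hypothesis (A2) enters only if one prefers to quote the sharper Gaussian gradient bounds for $h_t$ from \cite{gru}/\cite{hebi} rather than re-deriving them; it is not otherwise essential to this argument.
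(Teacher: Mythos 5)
There is a genuine gap at the foundational step. You invoke De~Giorgi--Nash--Moser to obtain a scale-invariant gradient bound
\[
\left|\nabla_x u(x_0,t_0)\right|\lesssim \frac{1}{r}\sup_{Q_r(x_0,t_0)}|u|
\]
and explicitly claim that "here we only need (A1), the measurability and uniform ellipticity of $A$." This is false: De~Giorgi--Nash--Moser yields interior boundedness and $C^{\alpha}$ (H\"older) regularity for some small $\alpha$ depending only on the ellipticity ratio and dimension, \emph{not} Lipschitz or $C^{1}$ regularity. Solutions of $\partial_t u - \operatorname{div}(A\nabla u)=0$ with merely bounded measurable $A$ need not be differentiable in $x$, so the pointwise quantity $\nabla_x K_t^{L}(x,y)$ may not even exist in that generality. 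Scale-invariant interior gradient estimates require quantitative smoothness of the coefficients (Dini or H\"older continuity, i.e.\ Schauder theory, or $C^{1,\alpha}$ coefficients permitting a rewrite in non-divergence form with a $W^{2,p}$ estimate), which is precisely what assumption (A2) supplies. Your closing remark that (A2) "is not otherwise essential to this argument" is therefore backwards: (A2) is exactly what makes the gradient estimate possible, and without it the core step of your proof collapses.

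The paper's argument avoids this by working through the \emph{elliptic} Green's function $\Gamma_0$ of $-\operatorname{div}(A\nabla)$: using a cutoff $\eta$ adapted to the scale $\sqrt t + |x_0-y_0|$ and the equation $\partial_t u + Lu = 0$, it writes $-\sum\nabla_i\bigl(a_{ij}\nabla_j(u\eta)\bigr)$ in terms of $\partial_t u$, $Vu$, and cutoff-derivative terms, then represents $\nabla_x u$ as $\int \nabla_x\Gamma_0(x,y)\{\cdots\}\,dy$ plus a second-derivative term. The pointwise Green's function bounds \eqref{green} (Gr\"uter--Widman, requiring (A1) and (A2)) and Lemma~\ref{xiao V 2} handle the $V$-term, while Proposition~\ref{Q bdd} controls $\sup|u|$ and $\sup|\partial_t u|$ on the cutoff ball and already carries the $(1+\sqrt t/\rho)^{-N}$ decay, so no separate bootstrap via the perturbation formula is needed. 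As a minor additional point, your opening case reduction — that for $\sqrt t\lesssim\rho(x)$ "the decay factor is comparable to a constant" — neglects the $\sqrt t/\rho(y)$ term, which is not controlled solely by the condition $\sqrt t\lesssim\rho(x)$; one must also use the Gaussian factor together with Lemma~\ref{special mV}(2), but this is secondary to the main gap above.
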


\begin{proof}
	
	Let $\Gamma _ { 0 } ( \cdot , \cdot )$ be the fundamental solution of
	$- div \left( A \cdot \nabla \right)$ in $\mathbb { R } ^ { n } $.
	Assume that $u ( \cdot , \cdot )$ is a weak solution to
	\[\partial _ { t } u + L u =
	\partial _ { t } u + ( - div \left( A (x) \cdot \nabla u \right) )  + V u = 0 .\]
Firstly, for any point $y_0 \in \mathbb{R}^n$, define $u(x,t) := K_t^L(x, y_0)$. Then $u$ is a solution to the heat equation
\[
\partial_t u + L u = 0 \quad \text{in } \mathbb{R}^{n+1}_+.
\]
Fix $t > 0$ and set $\epsilon = {1}/ {2}$. Let $\eta \in C_0^\infty\left(B\left(x_0, \epsilon(\sqrt{t} + |x_0 - y_0|)\right)\right)$ be a smooth cutoff function such that $\eta \equiv 1$ on the ball
\[
B\left(x_0, \epsilon^2(\sqrt{t} + |x_0 - y_0|)\right).
\]
 Moreover, on the annulus 
\[
U = B\left(x_0, \epsilon(\sqrt{t} + |x_0 - y_0|)\right) \setminus B\left(x_0, \epsilon^2(\sqrt{t} + |x_0 - y_0|)\right),
\]
we have the estimates
\[
|\nabla \eta(x)| \lesssim (\sqrt{t} + |x_0 - y_0|)^{-1}, \quad
|\nabla^2 \eta(x)| \lesssim (\sqrt{t} + |x_0 - y_0|)^{-2}.
\]

	Noticing that $\left(a _ { i j } ( x )\right)$ satisfies the conditions $(A1)$ \& $ (A2)$ and $\partial _ { t } u + L u = 0 $,
	we can obtain
	\begin{align*}
		&- \sum _ { i ,j  = 1 } ^ { n } \nabla _ { x _ { i } }
		\left( a _ { i j } ( x ) \nabla _ { x _ { j } }  ( u \eta )\right) \\
		&= - \sum _ { i ,j  = 1 } ^ { n } a_{ij} (x)
		\left(\nabla _ { x _ { i } } \nabla _ { x _ { j } }  u \cdot \eta
		+ \nabla _ { x _ { i } } u \cdot \nabla _ { x _ { j } } \eta
		+  \nabla _ { x _ { i } } \eta  \cdot \nabla _ { x _ { j } } u
		+ \nabla _ { x _ { i } } \nabla _ { x _ { j } } \eta  \cdot u  \right)  \\
		& \quad - \sum _ { i ,j  = 1 } ^ { n } \nabla _ { x _ { i } } a_{ij} (x)
		\left( \nabla _ { x _ { j } } u  \cdot \eta   (x,t)
		+ \nabla _ { x _ { j } } \eta  \cdot  u  (x,t) \right)   \\
		&= -\partial_{t} u \cdot  \eta - Vu \eta
		- \sum _ { i ,j  = 1 } ^ { n }  \nabla _ { x _ { i } }  a_{ij} (x)
		\nabla _ { x _ { j } } \eta  \cdot u  \\
		& \quad   - 2 \sum _ { i ,j  = 1 } ^ { n }
		a_{ij} (x) \nabla _ { x _ { i } } u \cdot \nabla _ { x _ { j } } \eta
		- \sum _ { i ,j  = 1 } ^ { n }  a_{ij} (x)
		\left(  \nabla _ { x _ { i } }  \nabla _ { x _ { j } } \eta  \cdot u \right) .
	\end{align*}
	Since $\left(a _ { i j } ( x )\right)$ satisfies the conditions $(A1)$ \& $ (A2)$, there exists a constant $ C _ { n , K } $ such that
	\begin{equation}\label{A}
		\left\{\begin{aligned}
			| a_{ij} (x) | &\leq K ,
			\quad  x \in
			B\left(x_{0},\epsilon \left(\sqrt{t} + |x_{0}-y_{0}| \right) \right)  ; \\
			| \nabla  a_{ij} (x) | &\leq \frac{C _ { n , K } }{ \sqrt{t} + |x_{0}-y_{0}| } ,
			\quad  x \in U ,
		\end{aligned}\right.
	\end{equation}
	which, together with integration by parts, gives
	\begin{align*}
		&- \int _ { \mathbb{ R } ^ { n } } \sum _ { i ,j  = 1 } ^ { n } a_{ij} (y) \cdot
		\Gamma _ { 0 } ( x , y ) \cdot {\nabla}_{i} u ( y , t ) \cdot {\nabla}_{j} \eta ( y ) d y  \\
		&=\int _ { \mathbb{ R } ^ { n } }     \left(\sum _ { i ,j  = 1 } ^ { n }  a_{ij} (y)
		\cdot  {\nabla}_{i} \Gamma _ { 0 } ( x , y )  \cdot   {\nabla}_{j} \eta (y)  \right) u (y,t) d y  \\
		& \quad +\int _ { \mathbb{ R } ^ { n } }    \left(\sum _ { i ,j  = 1 } ^ { n } a_{ij} (y)
		\cdot \Gamma _ { 0 } ( x , y )  \cdot   {\nabla}_{i}  {\nabla}_{j} \eta (y)  \right) u (y,t) d y  \\
		& \quad +\int _ { \mathbb{ R } ^ { n } }    \left(\sum _ { i ,j  = 1 } ^ { n }
		{\nabla}_{i} a_{ij} (y)   \cdot \Gamma _ { 0 } ( x , y )  \cdot
		{\nabla}_{j} \eta (y)  \right) u (y,t) d y  .
	\end{align*}
	
	It follows from Lemma \ref{xiao V 2} and \cite[(1.7)] {shen} that
	\[\int _ { B \left( x , R \right) } \frac { V ( y ) } { | x - y | ^ { n - 1 } } d y
	\leq \frac { C } { R ^ { n - 1 } } \int _ { B \left( x , R \right) } V ( y ) d y
	\leq \frac { C } { R }
	\left( 1 + \frac { R } { \rho \left( x  \right) } \right) ^ { l _ { 0 } } , \;
	l _ { 0 } > 1 .\]
	
	By (\ref{green}), for any $x \in B\left(x_{0},\epsilon^{3} \left(\sqrt{t} + |x_{0}-y_{0}| \right) \right) $, we have
	\begin{align*}
		|\nabla_{ x } u(x,t) |
		&= {\Bigg|} \int _ { \mathbb { R } ^ { n } } \nabla_{ x } \Gamma _ { 0 } ( x , y )
		\Bigg\{ -\partial_{t} u (y,t) \cdot  \eta (y)- V ( y ) u ( y , t ) \eta ( y ) \\
		&\quad + \sum _ { i ,j  = 1 } ^ { n }  a_{ij} (y)
		\left(  \nabla _ { i }  \nabla _ { j } \eta (y)  \cdot u (y,t) \right)  \\
		& \quad +  \sum _ { i ,j  = 1 } ^ { n }
		{\nabla}_{i} a_{ij} (y)   \cdot {\nabla}_{j} \eta (y)  \cdot u (y,t)   \Bigg\} d y \\
		& \quad + 2 \int _ { \mathbb{ R } ^ { n } }     \sum _ { i ,j  = 1 } ^ { n }  a_{ij} (y)  \cdot
		\nabla_{ x } {\nabla}_{i} \Gamma _ { 0 } ( x , y )  \cdot
		{\nabla}_{j} \eta (y)  \cdot  u (y,t) d y   {\Bigg|}  \\
		& \lesssim (\sqrt{t} + |x_{0}-y_{0}|) \cdot
		\sup_{ y \in B\left(x_{0},\epsilon \left(\sqrt{t} + |x_{0}-y_{0}| \right) \right) }
		| \partial_{ t } u ( y , t ) |  \\
		& \quad  + \frac{	\left( 1 + { ( \sqrt{t} + |x_{0}-y_{0}| ) } / { \rho \left( x  \right) } \right) ^ { l _ { 0 } }}{ \sqrt{t} + |x_{0}-y_{0}| }
		\cdot \sup_{ y \in B\left(x_{0},\epsilon \left(\sqrt{t} + |x_{0}-y_{0}| \right) \right) }
		| u ( y , t ) | .
	\end{align*}
	Take $u \left( x _ { 0 } , t \right) = K _ { t } ^ { L } \left( x _ { 0 } , y _ { 0 } \right) .$	
	For any $y\in B\left(x_{0},\epsilon \left(\sqrt{t} + |x_{0}-y_{0}| \right) \right)$,
	$ \sqrt{t} + |x_{0}-y_{0}| \sim  \sqrt{t} + |y-y_{0}| $.
	By Proposition \ref{Q bdd} and Lemma \ref{special mV}, a straightforward computation yields
	\begin{align}\label{u1}
		\left| \nabla_{x}  K _ { t } ^{L} \left( x _ { 0 } , y _ { 0 } \right) \right|
		& \lesssim t^{- (n+1) / {2}} exp(-c|x_{0}-y_{0}|^{2} /t  )
		\left(1+ \frac{\sqrt{t}}{\rho(x_{0})} + \frac{\sqrt{t}}{\rho(y_{0})}  \right)
		^ { \frac {-N}  { l_{0} + 1 } + c }    .
	\end{align}
	
	Now, letting $ u ( x_{0} , t ) = t \partial_{ t } K_t^{L}(x_{0}, y_{0}) $, we use a similar argument as
	in (\ref{u1}) to obtain the estimate for the term $\left| t  \nabla_{x} \partial_{ t } K_t^{L}(x, y) \right|$
	in (\ref{u}). This completes the proof of Lemma \ref{x K bdd}.
\end{proof}

\begin{remark}
	When $A=E $ (the identity matrix), the estimate (\ref{u}) with an exponential function decay term is better than the spatial derivative estimate in \cite{li}.
\end{remark}

In \cite[Lemma 2.1]{dziub 2}, Dziuba\'nski and Preisner obtained $ L ^ {1} $-boundedness
and $ L ^ {2} $-boundedness of spatial gradient of the integral kernels of the operator semigroup $\{ \exp\left( - t L \right) \} _ { t > 0 } $ on $L ^ { 2 } ( \mathbb{ R }^{n+1}_{+} )$ generated by $ L $, where $ L = - \Delta + V $. Since the spatial derivative obtained above is a stronger pointwise estimate than the norm estimate, we can generalize the result of Dziuba\'nski and Preisner to  $ L ^ {p} \; ( p \in [1,\infty) )$ spatial gradient estimates of the integral kernels of $\{ \exp\left( - t L \right) \} _ { t > 0 } $ on $L ^ { 2 } ( \mathbb{ R }^{n} )$, where $ L = - \text{div} (A (x) \cdot \nabla ) + V ( x ) $.

\begin{proposition} \label{Lbdd}
	Suppose that $\left(a _ { i j } ( x )\right)$ satisfies the conditions $(A1) \; \& \; (A2)$, and that $V \in B _ { q }$
	for some $q > n$. Then for $\alpha > 0$ and $ p \in [1,\infty) $, the spatial derivative of $K _ { t } ^ { L } \left( \cdot , \cdot \right)$ satisfies
	\begin{equation*}
		\left\| \nabla _ { x } K _ { t } ^ { L } \left( \cdot , y \right) \right\|
		_ { L ^ { p } \left( \exp\left( \alpha  | x - y | / \sqrt { t } \right)  d x \right) }
		\lesssim t ^{ n / ( 2 p ) - ( n + 1 ) / 2 } .
	\end{equation*}
\end{proposition}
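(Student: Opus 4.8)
The plan is to deduce the $L^p$ estimate directly from the pointwise bound for $\nabla_x K_t^L(x,y)$ established in Lemma \ref{x K bdd}, by integrating the (weighted) $p$-th power in $x$. First I would fix $y \in \mathbb{R}^n$ and $t > 0$, and recall from Lemma \ref{x K bdd} that for every $N > 0$ there is $C_N$ with
\[
\left| \nabla_x K_t^L(x,y) \right| \leq \frac{C_N\, t^{-(n+1)/2}\, \exp\!\left(-c|x-y|^2/t\right)}{\left(1 + \sqrt{t}/\rho(x) + \sqrt{t}/\rho(y)\right)^N}.
\]
Since the denominator is $\geq 1$, we may simply drop it and keep the Gaussian factor, which is all that is needed here; the weight $\exp(\alpha|x-y|/\sqrt{t})$ in the statement will be absorbed by splitting the exponent $\exp(-c|x-y|^2/t) = \exp(-c|x-y|^2/(2t))\exp(-c|x-y|^2/(2t))$ and using that $\exp(-c|x-y|^2/(2t))\exp(\alpha|x-y|/\sqrt{t})$ is uniformly bounded in $x$ (by Young's inequality $\alpha s - (c/2)s^2 \leq \alpha^2/(2c)$ with $s = |x-y|/\sqrt{t}$). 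Thus it remains to bound
\[
\int_{\mathbb{R}^n} \left| \nabla_x K_t^L(x,y) \right|^p \exp\!\left(\alpha|x-y|/\sqrt{t}\right) dx
\lesssim t^{-p(n+1)/2} \int_{\mathbb{R}^n} \exp\!\left(-cp|x-y|^2/(2t)\right) dx.
\]

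Next I would evaluate the remaining Gaussian integral by the change of variables $z = (x-y)/\sqrt{t}$, which gives $\int_{\mathbb{R}^n} \exp(-cp|x-y|^2/(2t))\,dx = t^{n/2}\int_{\mathbb{R}^n}\exp(-cp|z|^2/2)\,dz = C\, t^{n/2}$, the constant being finite and independent of $t$ and $y$. Collecting the powers of $t$ yields
\[
\int_{\mathbb{R}^n} \left| \nabla_x K_t^L(x,y) \right|^p \exp\!\left(\alpha|x-y|/\sqrt{t}\right) dx \lesssim t^{-p(n+1)/2}\, t^{n/2} = t^{\,n/2 - p(n+1)/2},
\]
and taking the $p$-th root gives the claimed bound $t^{n/(2p) - (n+1)/2}$. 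This handles the case $p \in [1,\infty)$ uniformly; for $p=1$ the argument is identical since no $L^1$-specific cancellation is needed.

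I do not expect a substantive obstacle here: the proposition is essentially a corollary of the pointwise estimate in Lemma \ref{x K bdd}, and the only points requiring a line of care are (i) checking that the auxiliary-function denominator can be discarded (it is bounded below by $1$, so this only helps), and (ii) verifying that the exponential weight $\exp(\alpha|x-y|/\sqrt{t})$ is dominated by a fraction of the Gaussian decay, which is the standard $\alpha s - \beta s^2 \leq \alpha^2/(4\beta)$ trick. The mild subtlety worth mentioning is that the implicit constant will depend on $\alpha$, $p$, $n$, and on the constant $c$ from Lemma \ref{x K bdd}, but not on $t$ or $y$; one should also note that the constant $C_N$ can be taken with, say, $N=0$ (or any fixed $N$), so no largeness of $N$ is exploited. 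Beyond that, the computation is purely the scaling of a Gaussian, so the proof is short.
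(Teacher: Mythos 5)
Your proof is correct and follows the same route as the paper's: start from the pointwise bound in Lemma \ref{x K bdd}, drop the auxiliary-function factor since it is $\geq 1$, and exploit that the Gaussian decay dominates the exponential weight $\exp(\alpha|x-y|/\sqrt{t})$. The only difference is cosmetic in the final step: you absorb the weight pointwise into half the Gaussian (completing the square) and are left with a plain Gaussian integral over $\mathbb{R}^n$, whereas the paper passes to polar coordinates and verifies $\int_0^\infty \exp(-cu^2+\alpha u)\,u^{n-1}\,du < \infty$ by completing the square, splitting the domain at $\alpha/(2c)$, and expanding $(v+\alpha/(2c))^{n-1}$ binomially; your version is marginally more streamlined but not a different method.
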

\begin{proof}
	By Lemma \ref{x K bdd} and the change of variables, we can obtain
	\begin{align*}
		\left\| \nabla _ { x } K _ { t } ^ { L } \left( \cdot , y \right) \right\|
		_ { L ^ { p } \left( \exp\left( \alpha  | x - y | / \sqrt { t } \right)  d x \right) }
		& \lesssim  \frac{\left( \int _ { 0 } ^ { \infty } \exp\left( - c u ^ { 2 } +  \alpha u  \right) u ^ { n - 1 } d u \right) ^ { 1 / p } }  { t ^{ ( n + 1 ) / 2 - n / ( 2 p )  } }  \\
		& \lesssim   \frac { \left(   M _ { 1 } + M _ { 2 } \right) ^ { 1 / p } }
		{ t ^{ ( n + 1 ) / 2 - n / ( 2 p )  } } ,
	\end{align*}
	where
	\begin{equation*}
		\left\{\begin{aligned}
			& M _ { 1  }:=  \int _ { -  \alpha / ( 2 c  ) } ^ {  \alpha / ( 2 c  ) }
			\exp\left( - c v ^ { 2 } \right) ( v +  \alpha / ( 2 c  )  ) ^ { n - 1 } d v  ;   \\ 
			& M _ { 2 }:=  \int _ {  \alpha / ( 2 c  ) } ^ { \infty }
			\exp\left( - c v ^ { 2 } \right) ( v +  \alpha / ( 2 c  )  ) ^ { n - 1 } d v   .
		\end{aligned}\right.
	\end{equation*}
	Obviously, $M _ { 2 } \lesssim  1 $. For $ M _ { 1 } $, we have
	\begin{align*}
	M _ { 1 } & = \sum _ { k = 0 } ^ { n - 1 } 
	\frac{ C _ { n - 1 } ^ { k }} { (  \alpha / ( 2 c  ) ) ^ { 1 + k - n } }  
	\int _ { -  \alpha / ( 2 c  ) } ^ { \alpha / ( 2 c  ) } 	\exp\left( - c v ^ { 2 } \right)  v ^ { k } d v  \\
	& = \sum _ { k = 0 } ^ { n - 1 } C _ { n - 1 } ^ { k }
	(  \alpha / ( 2 c  ) ) ^ { n - 1 - k } M _ { 1 , k } ,
	\end{align*}
	where
	\begin{equation*}
		M _ { 1 , k } =
		\left\{\begin{aligned}
			& 0 ,   &k \; is \; odd ;   \\ 
			&  2 \int _ { 0 } ^ {  \alpha / ( 2 c  ) }	\exp\left( - c v ^ { 2 } \right)  v ^ { k } d v  ,
			& k \; is \; even . \\
		\end{aligned}\right.
	\end{equation*}
	Then we can obtain $M _ { 1 } \lesssim  1 $, which completes the proof of Proposition \ref{Lbdd}.
\end{proof}

By Propositions \ref{K bdd} \& \ref{Q bdd}, we can obtain a similar result as follows.
\begin{proposition}\label{corollary}
	Suppose that $\left(a _ { i j } ( x )\right)$ satisfies the conditions $(A1) \; \& \; (A2)$, and that $V \in B _ { q }$ for some $q > n / 2 $. For $\alpha > 0$, $ p \in [1,\infty) $, we have the following $L ^ { p }$-estimate:
	\begin{equation*}
		\left\|  K _ { t } ^ { L } \left( \cdot , y \right) \right\|
		_ { L ^ { p } \left( \exp\left( \alpha  | x - y | / \sqrt { t } \right)  d x \right) }
		+ 
		\left\|  Q _ { t  , m } ^ { L } \left( \cdot , y \right) \right\|
		_ { L ^ { p } \left( \exp\left( \alpha  | x - y | / \sqrt { t } \right)  d x \right) }
		\lesssim t ^{ n / ( 2 p ) - n / 2 } .
	\end{equation*}
\end{proposition}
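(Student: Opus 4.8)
The plan is to run essentially the same argument as in the proof of Proposition \ref{Lbdd}, but starting from the Gaussian upper bounds for $K_t^L(\cdot,\cdot)$ and $Q_{t,m}^L(\cdot,\cdot)$ furnished by Proposition \ref{K bdd} and Proposition \ref{Q bdd}(i), rather than from the spatial gradient bound of Lemma \ref{x K bdd}. This is also why the hypothesis here only asks for $V\in B_q$ with $q>n/2$ instead of $q>n$: the required pointwise bounds are available under the weaker assumption. Since the factor $(1+\sqrt t/\rho(x)+\sqrt t/\rho(y))^{-N}$ occurring in both estimates is always $\le 1$, I would discard it at once and work with
\[
|K_t^L(x,y)|+|Q_{t,m}^L(x,y)|\lesssim t^{-n/2}\exp\!\big(-c|x-y|^2/t\big),\qquad x,y\in\mathbb{R}^n,\ t>0,
\]
for a suitable $c>0$.

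Next I would raise this to the $p$-th power, insert the weight $\exp(\alpha|x-y|/\sqrt t)$, integrate in $x$, and perform the change of variables $z=(x-y)/\sqrt t$ (so that $dx=t^{n/2}\,dz$ and $|x-y|^2/t=|z|^2$), which gives
\[
\int_{\mathbb{R}^n}\big(t^{-n/2}\exp(-c|x-y|^2/t)\big)^p\exp(\alpha|x-y|/\sqrt t)\,dx=t^{n/2-np/2}\int_{\mathbb{R}^n}\exp\!\big(-cp|z|^2+\alpha|z|\big)\,dz .
\]
The entire $t$-dependence has now separated into the prefactor $t^{n/2-np/2}$, and it remains only to see that the residual integral $\int_{\mathbb{R}^n}\exp(-cp|z|^2+\alpha|z|)\,dz$ is a finite constant depending only on the fixed parameters $n,p,c,\alpha$. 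Passing to polar coordinates reduces it to $\int_0^\infty\exp(-cpr^2+\alpha r)\,r^{n-1}\,dr$, and since $cp>0$ the quadratic term dominates, so the integral converges; if one wants this fully explicit, completing the square and splitting the radial integral over $\{r\le\alpha/(2cp)\}$ and $\{r>\alpha/(2cp)\}$ reproduces verbatim the $M_1,M_2$ bookkeeping of the proof of Proposition \ref{Lbdd}.

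Taking the $p$-th root of the displayed identity then yields
\[
\|K_t^L(\cdot,y)\|_{L^p(\exp(\alpha|x-y|/\sqrt t)\,dx)}+\|Q_{t,m}^L(\cdot,y)\|_{L^p(\exp(\alpha|x-y|/\sqrt t)\,dx)}\lesssim\big(t^{n/2-np/2}\big)^{1/p}=t^{n/(2p)-n/2},
\]
which is the claimed estimate. There is essentially no obstacle here; the only point needing a little care is the one already present in Proposition \ref{Lbdd}, namely checking that the Gaussian integral is finite and, crucially, independent of $t$ — this is exactly what makes the scaling $t^{n/(2p)-n/2}$ come out cleanly — and since $cp>0$ this is automatic, so the "main step" is in fact routine.
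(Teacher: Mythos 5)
Your proof is correct and follows exactly the route the paper intends: the paper itself remarks only that the estimate follows ``By Propositions~\ref{K bdd} \& \ref{Q bdd}'' in the same way as Proposition~\ref{Lbdd}, and your argument — drop the $(1+\sqrt t/\rho(x)+\sqrt t/\rho(y))^{-N}$ factor, change variables $z=(x-y)/\sqrt t$, and note that $\int_{\mathbb{R}^n}\exp(-cp|z|^2+\alpha|z|)\,dz<\infty$ — is precisely that computation, down to the optional $M_1,M_2$ splitting. Nothing to add.
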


To study the Lipschitz property of the kernel function, the $L^p$-boundedness of the operator $\nabla^2 L_0^{-1}$ is required in our proof. Applying the boundedness equivalence in \cite[Theorem B]{avell}, we impose the additional condition $(A3)$ on the coefficients $a_{ij}(x)$, which further enables us to establish the Lipschitz continuity of $\nabla_x K_t^{L}(\cdot,\cdot)$ under the strengthened assumption that $\big(a_{ij}(x)\big)$ satisfies the conditions $(A 1) \; \& \; (A2) \; \& \; (A3) $.

\begin{lemma} \label{x K smo}
	Suppose that $\left(a _ { i j } ( x )\right)$ satisfies the conditions $(A 1) \; \& \; (A2) \; \& \; (A3) $, and that $V \in B _ { q }$ for some $q > n$. Let $\delta ^ { \prime } = 1 - n / q $. For every $N > 0 $, there exist constants $C _ { N } > 0$ and $c > 0 $ such that for all $x , y \in \mathbb { R } ^ { n } $, $t > 0$ and $| h | < | x - y | / 4 $,
	\begin{align*}
		\left| \nabla _ { x } K _ { t } ^ { L } ( x + h , y )
		- \nabla _ { x } K _ { t } ^ { L } ( x , y ) \right|   \leq
		\frac { C _ { N } \left( { | h |} / { \sqrt{t } } \right) ^{ \delta ' } exp( - c \left| x - y \right| ^ { 2 } / t ) } { t ^ { ( n + 1 ) / 2 } 	\left( 1 + { \sqrt { t } } / { \rho \left( x \right) } + { \sqrt { t } } / { \rho \left( y \right) } \right) ^ {  N } } .
	\end{align*}
\end{lemma}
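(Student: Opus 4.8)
The plan is to combine the gradient bound of Lemma~\ref{x K bdd} with the Lipschitz smoothing estimate~(\ref{equ K smo}) for $K_t^L(\cdot,\cdot)$ itself (Proposition~\ref{K smo}), together with the $L^p$-boundedness of $\nabla^2 L_0^{-1}$ from \cite[Theorem B]{avell}, which is precisely what the extra hypothesis $(A3)$ is imposed for. As in the proof of Lemma~\ref{x K bdd}, fix $x_0,y_0\in\mathbb{R}^n$ and $t>0$, set $u(x,t)=K_t^L(x,y_0)$, which solves $\partial_t u+Lu=0$, and write the second-order spatial derivatives of $u$ via the representation $u\eta = -L_0^{-1}\big(L_0(u\eta)\big)$ on a ball $B(x_0,\epsilon(\sqrt t+|x_0-y_0|))$, using the cutoff $\eta$ and the identity for $-\sum_{i,j}\nabla_{x_i}(a_{ij}\nabla_{x_j}(u\eta))$ already derived in that proof. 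Applying $\nabla^2 L_0^{-1}:L^p\to L^p$ to this identity, and then $L^p$–$L^\infty$ considerations (or a Moser-type iteration / local boundedness of $\nabla^2 u$) localized to the smaller ball, one obtains a pointwise bound
\[
|\nabla_x^2 K_t^L(x_0,y_0)|\lesssim \big(\sqrt t+|x_0-y_0|\big)^{-1}\sup_{U}|\nabla u|
+\big(\sqrt t+|x_0-y_0|\big)^{-2}\sup_{U}|u| + \text{(terms with $\partial_t u$, $Vu$)},
\]
and then Lemma~\ref{x K bdd}, Proposition~\ref{K bdd}, Lemma~\ref{special mV} and the observation $\sqrt t+|x_0-y_0|\sim\sqrt t+|y-y_0|$ on $U$ convert the right side into the Gaussian-type bound $t^{-(n+2)/2}\exp(-c|x_0-y_0|^2/t)(1+\sqrt t/\rho(x_0)+\sqrt t/\rho(y_0))^{-N}$.

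With a bound on $|\nabla_x^2 K_t^L|$ in hand, the Lipschitz estimate follows by the standard dichotomy on the size of $|h|$ relative to $\sqrt t$. When $|h|\le\sqrt t$, I would write $\nabla_x K_t^L(x+h,y)-\nabla_x K_t^L(x,y)=\int_0^1 \nabla_x^2 K_t^L(x+sh,y)\cdot h\,ds$ and estimate the integrand by the bound just derived; since $|h|<|x-y|/4$ we have $|x+sh-y|\sim|x-y|$ uniformly in $s\in[0,1]$, so the Gaussian factor is comparable to $\exp(-c|x-y|^2/t)$ and the $\rho$-weights are comparable (again by Lemma~\ref{special mV}), yielding the gain $|h|/\sqrt t$ to the first power, hence certainly to the power $\delta'=1-n/q\le 1$. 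When $\sqrt t\le|h|<|x-y|/4$, the mean-value approach is wasteful, so instead I would bound the difference by $|\nabla_x K_t^L(x+h,y)|+|\nabla_x K_t^L(x,y)|$ directly via Lemma~\ref{x K bdd}, and absorb the factor $(|h|/\sqrt t)^{\delta'}\ge 1$ trivially, using once more that $|x+h-y|\sim|x-y|$ to keep the exponential and the auxiliary-function weights under control.

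The main obstacle is the first step: obtaining a genuinely \emph{pointwise} second-derivative bound for $K_t^L$ with the correct Gaussian decay and the correct power $N$ of the auxiliary-function weight, starting only from the $L^p$-boundedness of $\nabla^2 L_0^{-1}$. The operator $L_0^{-1}$ is only a bounded inverse in a suitable global sense, so care is needed in how $u\eta$ is inverted and how the compactly supported error terms (those involving $\nabla\eta$, $\nabla^2\eta$, $\nabla a_{ij}\,\nabla\eta$, $Vu\eta$ and $\partial_t u\,\eta$) are estimated; in particular the potential term $Vu\eta$ must be handled with $V\in B_q$, $q>n$, via the estimate $\int_{B(x,R)}V(y)|x-y|^{1-n}\,dy\lesssim R^{-1}(1+R/\rho(x))^{l_0}$ quoted in the proof of Lemma~\ref{x K bdd} (and Lemma~\ref{xiao V 2}), which is exactly where the restriction $q>n$ and the loss/recovery of powers in the weight $(1+\sqrt t/\rho(x)+\sqrt t/\rho(y))^{-N}$ originate. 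Passing from the resulting exponent of the form $-N/(l_0+1)+c$ back to an arbitrary $N$ is routine, since $N$ was arbitrary to begin with.
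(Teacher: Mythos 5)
Your overall framework (the $u\eta$-representation, the use of \cite[Theorem~B]{avell}, the $V$-term estimate via Lemma~\ref{xiao V 2}) matches the paper's, but at the decisive step you diverge, and in a way that does not work. The paper never proves a \emph{pointwise} bound on $\nabla_x^2 K_t^L$. Instead it derives an $L^q$ estimate
\[
\Bigl(\int_{B\left(x_0,\epsilon(\sqrt t+|x_0-y_0|)\right)}\left|\nabla_x^2 K_t^L(x,y_0)\right|^q\,dx\Bigr)^{1/q}
\lesssim \frac{t^{-(n+1)/2+n/(2q)}}{\bigl(1+\sqrt t/\rho(x_0)+\sqrt t/\rho(y_0)\bigr)^N}\,e^{-c|x_0-y_0|^2/t},
\]
and then invokes the Morrey embedding $W^{1,q}_{\mathrm{loc}}\hookrightarrow C^{0,1-n/q}_{\mathrm{loc}}$ (valid precisely because $q>n$) to conclude
\[
\left|\nabla_x K_t^L(x_0+h,y_0)-\nabla_x K_t^L(x_0,y_0)\right|
\lesssim |h|^{1-n/q}\left\|\nabla_x^2 K_t^L(\cdot,y_0)\right\|_{L^q(B)} .
\]
This is what produces the exponent $\delta'=1-n/q$; it is not a cosmetic weakening of a hypothetical exponent $1$, it is exactly the Hölder modulus that the $L^q$ second-derivative bound buys you. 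The hypotheses $q>n$ and $(A3)$ are there precisely to make this $L^q$-to-$C^{0,\delta'}$ step run.

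The step you flag as "the main obstacle" --- passing from the $L^q$ bound to a pointwise bound on $\nabla_x^2 K_t^L$ --- is not merely hard, it is not available under the stated hypotheses. The operator $\nabla^2 L_0^{-1}$ is a genuine Calder\'on--Zygmund singular integral (the kernel $\nabla_x^2\Gamma_0(x,y)\sim|x-y|^{-n}$ is not absolutely integrable), so $\int \nabla_x^2\Gamma_0(x,y)\,V(y)u(y,t)\eta(y)\,dy$ does not converge pointwise unless $Vu$ has some modulus of continuity; $V\in B_q$ gives only $L^q_{\mathrm{loc}}$. Moser iteration and De~Giorgi--Nash theory give local boundedness and $C^{0,\alpha}$ regularity of $u$ itself (which is why Propositions~\ref{K bdd}--\ref{K smo} hold), but they do not give $L^\infty_{\mathrm{loc}}$ control of $\nabla^2 u$; that would require $C^\alpha$ (or at least Dini) data, i.e.\ Schauder theory, which the term $Vu$ blocks. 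So your proposed pointwise $\nabla^2$-bound plus mean-value-theorem route fails at exactly the step you identified; the missing ingredient is to stay at the $L^q$ level and apply Morrey's inequality, which is the route the paper takes.
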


\begin{proof}
	Let $\Gamma _ { 0 } ( \cdot , \cdot )$ be the fundamental solution of
	$- div \left( A \cdot \nabla \right)$ in $\mathbb { R } ^ { n } $.
	Assume that $u ( \cdot , \cdot )$ is a weak solution to the equation:
	\[\partial _ { t } u + L u =
	\partial _ { t } u + ( - div \left( A (x) \cdot \nabla u \right) )  + V u = 0 .\]
	Similarly to Lemma \ref{x K bdd}, we know that for any $y _ { 0 } \in \mathbb { R } ^ { n } $, the function $ u \left(x,t\right) := K_{t}^{L}\left(x,y_{0}\right)$ solves the heat equation:
	$\partial _ { t } u + L u = 0$ on $ \mathbb { R } ^ { n+1 } _{+} $.
	We can obtain
	\begin{align*}
		&- \sum _ { i ,j  = 1 } ^ { n } \nabla _ { x _ { i } }
		\left( a _ { i j } ( x ) \nabla _ { x _ { j } }  ( u \eta )\right)  \\
		& = -\partial_{t} u \cdot  \eta - Vu \eta - 2 \sum _ { i ,j  = 1 } ^ { n }
		a_{ij} (x) \nabla _ { x _ { i } } u \cdot \nabla _ { x _ { j } } \eta   \\
		& \quad - \sum _ { i ,j  = 1 } ^ { n }  a_{ij} (x)
		\left(  \nabla _ { x _ { i } }  \nabla _ { x _ { j } } \eta  \cdot u \right)
		- \sum _ { i ,j  = 1 } ^ { n }  \nabla _ { x _ { i } }  a_{ij} (x)
		\nabla _ { x _ { j } } \eta  \cdot u .
	\end{align*}
	Then, for any $x \in B\left(x_{0},\epsilon^{3} \left(\sqrt{t} + |x_{0}-y_{0}| \right) \right) $, it holds
	\begin{align*}
		{\nabla }_ {x} ^ {2}  u ( x , t )
		&= \int _ { \mathbb { R } ^ { n } }  {\nabla }_ {x} ^ {2}  \Gamma _ { 0 } ( x , y )
		\Bigg\{ -  \eta (y) \partial_{t} u (y,t)  - 2 \sum _ { i ,j  = 1 } ^ { n }  a_{ij} (y) \nabla _ { i }  u (y,t)
		\nabla _ { j } \eta (y) \\
		& \quad
		- V ( y ) u ( y , t ) \eta ( y )	- \sum _ { i ,j  = 1 } ^ { n }  a_{ij} (y)
		\nabla _ { i }  \nabla _ {j } \eta  \cdot u  	\\
		& \quad - \sum _ { i ,j  = 1 } ^ { n }  \nabla _ { i }  a_{ij} (y)
		\nabla _ { j } \eta  \cdot u  \Bigg\} d y  .
	\end{align*}
	Since $\left(a _ { i j } ( x )\right)$ satisfies equations (\ref{A}), we have
	$\left\| \nabla _ { x } ^ { 2 } u ( \cdot , t ) \right\| _ { q }
	\leq \sum \limits _ { i = 1 } ^ { 5 } \left\| S _ { i } ( \cdot , t ) \right\| _ { q }$, where
	\begin{equation*}
		\left\{\begin{aligned}
			& S_ { 1 } ( x, t ):= \int_ { \mathbb{ R } ^ {n } }
			\left| {\nabla }_ {x} ^ {2} \Gamma _ { 0 } ( x , y ) \right | \cdot | \eta (y) | \cdot
			| \partial_{t} u (y,t) | d y ;  \\
			& S_ { 2 } ( x, t ):= \int_ { \mathbb{ R } ^ {n } }
			\left| {\nabla }_ {x} ^ {2} \Gamma _ { 0 } ( x , y ) \right | \cdot | \eta (y) | \cdot
			| V (y) | \cdot | u (y,t) | d y ;  \\
			& S_ { 3 } ( x, t ):= \int_ { \mathbb{ R } ^ {n } }
			\left| {\nabla }_ {x} ^ {2} \Gamma _ { 0 } ( x , y ) \right | \cdot
			\sum _ { i ,j  = 1 } ^ { n } \left|  a_{ij} (y) \right|  \cdot
			|  \nabla _ { i }  \nabla _ { j }  \eta (y) | \cdot  |  u (y,t) | d y ;  \\
			& S_ { 4 } ( x, t ):= 2 \int_ { \mathbb{ R } ^ {n } }
			\left| {\nabla }_ {x} ^ {2}  \Gamma _ { 0 } ( x , y ) \right | \cdot
			\sum _ { i ,j  = 1 } ^ { n } \left|  a_{ij} (y) \right|  \cdot
			|  {\nabla}_{j} \eta (y)  | \cdot   | {\nabla}_{i}  u (y,t) | d y ;  \\
			& S_ { 5 } ( x, t ):=  \int_ { \mathbb{ R } ^ {n } }
			\left| {\nabla }_ {x} ^ {2}  \Gamma _ { 0 } ( x , y ) \right | \cdot
			\sum _ { i ,j  = 1 } ^ { n } \left| {\nabla}_{i} a_{ij} (y) \right|  \cdot
			|  {\nabla}_{j} \eta (y)  | \cdot   |   u (y,t) | d y .
		\end{aligned}\right.
	\end{equation*}
	
	Now, we estimate the terms $\left\| S _ { i } ( \cdot , t ) \right\| _ { q } , i =
	1 , 2 , 3 , 4 $, separately. For the term $\left\| S _ { 1 } ( \cdot , t ) \right\|
	_ { q  }$, owing to the $L ^ { p }$-boundedness of the operator $ \nabla ^ { 2 } L _ { 0 } ^ { - 1 } $ (cf. \cite[Theorem B]{avell}),
	\begin{align*}
		\| S_ { 1 } ( \cdot , t ) \|_ { q }
		&\lesssim \left\{ \sup _ { y \in B \left( x _ { 0 } , \epsilon \left(\sqrt{t}
			+ |x_{0}-y_{0}| \right) \right) }
		\left| \partial _ { t } u ( y , t ) \right| \right\}
		\left\|   \int_ { \mathbb{ R } ^ {n } }  | \eta ( y ) |
		| \nabla _ { x } ^ { 2 } \Gamma _ { 0 } ( \cdot , y ) | d y \right\| _ { q }  \\
		&\lesssim { \left(\sqrt{t} + |x_{0}-y_{0}| \right) } ^ {n / q }   \left\{ \sup _ { y \in B \left( x _ { 0 } , \epsilon \left(\sqrt{t} + |x_{0}-y_{0}| \right) \right) }
		\left| \partial _ { t } u ( y , t ) \right| \right\}    .
	\end{align*}
	
	For the term $S _ { 2 } $, by Lemma \ref{xiao V 2} and the condition $V \in B _ { q }$,
	we can obtain
	\begin{align*}
		\left\| S _ { 2 } ( \cdot , t ) \right\| _ { q }
		& \lesssim 
		\left(  \int _ { B \left( x _ { 0 } , \epsilon \left(\sqrt{t} + |x_{0}-y_{0}| \right) \right) }
		V ^ { q } ( y ) d y  \right) ^ { 1 / q } 
		\sup _ { y \in B \left( x _ { 0 } , \epsilon \left(\sqrt{t} + |x_{0}-y_{0}| \right) \right) } | u ( y , t ) |    \\
		& \lesssim \frac{ \left( 1 +  ( \sqrt{t} + |x_{0}-y_{0}| )  / { \rho \left( x _ { 0 } \right) } \right) ^ { m _ { 0 } }  }{ { \left(\sqrt{t} + |x_{0}-y_{0}| \right)} ^ { 2 - n / q } }
	   \sup _ { y \in B \left( x , \epsilon \left(\sqrt{t} + |x_{0}-y_{0}| \right) \right) } | u ( y , t ) |   .
	\end{align*}
	
	The estimate of $S _ { 3 }$ is similar to that of $S _ { 1 } $. Noting that $\eta = 1$ on
	$B \left( x _ { 0 } ,\epsilon ^ { 2 } \left(\sqrt{t} + |x_{0}-y_{0}| \right)\right) ,$
	we can obtain
	\begin{align*}
		\left\| S _ { 3 } ( \cdot , t ) \right\| _ { q }
		& \lesssim \left\| \left\{ \sup _ { y \in B \left( x _ { 0 } , \epsilon \left(\sqrt{t} + |x_{0}-y_{0}| \right) \right)} | u ( y , t ) | \right\}
		\Delta \eta \right\| _ { q } .
	\end{align*}
For $x \in B\left(x_0, \epsilon^3(\sqrt{t} + |x_0 - y_0|)\right)$ and
\[
\epsilon^2(\sqrt{t} + |x_0 - y_0|) < |y - x_0| < \epsilon(\sqrt{t} + |x_0 - y_0|),
\]
we have $|x - y| \sim \sqrt{t} + |x_0 - y_0|$. It follows that
\[
\left\| S_3(\cdot, t) \right\|_q \lesssim \left( \sqrt{t} + |x_0 - y_0| \right)^{\frac{n}{q} - 2} \sup_{y \in B\left( x_0, \epsilon(\sqrt{t} + |x_0 - y_0|) \right)} |u(y,t)|.
\]

	At last, following the same procedure, we obtain
	\begin{equation*}
		\left\{\begin{aligned}
			\| S _ { 4 } ( \cdot , t ) \| _ { q } 	
			& \lesssim { \left(\sqrt{t} + |x_{0}-y_{0}| \right) }  ^ { n / q - 1 }
			\sup _ { y \in B \left( x _ { 0 } , \epsilon \left(\sqrt{t} + |x_{0}-y_{0}| \right)  \right)}  | \nabla u ( y , t ) | ;  \\
			\left\| S _ { 5 } ( \cdot , t ) \right\| _ { q }
			& \lesssim { \left(\sqrt{t} + |x_{0}-y_{0}| \right) }  ^ { n / q - 2 }
			\sup _ { y \in B \left( x _ { 0 } , \epsilon \left(\sqrt{t} + |x_{0}-y_{0}| \right)  \right)} | u ( y , t ) | .
		\end{aligned}\right.
	\end{equation*}
	
	Let $ u (x,t) = K _ { t } ^{L} \left( x , y _ { 0 } \right) $. For any $y\in B\left(x_{0},\epsilon \left(\sqrt{t} + |x_{0}-y_{0}| \right) \right)$, we have
	$ \sqrt{t} + |x_{0}-y_{0}| \sim  \sqrt{t} + |y-y_{0}| $.
	From the estimate for $\left\| \nabla_{ x }^{2} u ( x , t )\right\| _ { q }$, Proposition \ref{Q bdd} and Lemma \ref{special mV}, we have	
	\begin{align*}
		& \left\| \nabla_{x} ^{2}  K _ { t } ^{L} \left( x , y _ { 0 } \right) \right\|_{q}  \\
		& \lesssim  \sup _ { y \in B \left( x _ { 0 } , \epsilon \left(\sqrt{t} + |x_{0}-y_{0}| \right)  \right) } \left\{ exp( - c | x - y_{0} | ^ { 2 } / t )
		\left( 1 + \frac { \sqrt { t } } { \rho ( x ) }
		+ \frac { \sqrt { t } } { \rho ( y_{0} ) } \right) ^ { - N } \right\}  \\
		& \quad \times \Bigg\{  \frac{\left(\sqrt{t} + |x_{0}-y_{0}| \right) ^ {n / q } } { t ^ { 1 + n / 2 } } + \frac{\left( \left( 1 + \frac { \sqrt{t} + |x_{0}-y_{0}| } { \rho \left( x _ { 0 } \right) } \right) ^ { m _ { 0 } } + 1 \right) }
		{t ^ {  n / 2 } }  \\
		& \quad \times \left(\sqrt{t} + |x_{0}-y_{0}| \right) ^ { n / q - 2 } 
		+ \frac{ \left(\sqrt{t} + |x_{0}-y_{0}| \right) ^ {n / q  - 1 } } { t ^ {  ( n + 1 ) / 2 } } \Bigg\} .
	\end{align*}
	Take $ 0 < \epsilon \left(\sqrt{t} + |x_{0}-y_{0}| \right) < c \rho \left( x _ { 0 } \right) $, where $ \epsilon = 1 / 2 $, $ c> 0 $.
	Since $ x\in B\left(x_{0},\epsilon^{3} \left(\sqrt{t} + |x_{0}-y_{0}| \right) \right)$,
	$\left| x - x _ { 0 } \right| < c \rho \left( x _ { 0 } \right)/ 4 $, that is,
	$\rho ( x ) \sim \rho \left( x _ { 0 } \right) $. Hence,
	\begin{align*}
		& \left| \nabla _ { x } K _ { t } ^ { L } \left( x _ { 0 } + h , y _ { 0 } \right)
		- \nabla _ { x } K _ { t } ^ { L } \left( x _ { 0 } , y _ { 0 } \right) \right|   \\
		& \lesssim | h | ^ { 1 - n / q }
		\left( \int _ { B \left(x_{0},\epsilon \left(\sqrt{t} + |x_{0}-y_{0}| \right) \right) }
		\left| \nabla _ { x } ^ { 2 } K _ { t } ^ { L } \left( x , y _ { 0 } \right) \right|
		^ { q } d x \right) ^ { 1 / q }  \\
		& \lesssim  t^{- ( n + 1 ) / 2  } \left( \frac{ | h | }{ \sqrt{ t } } \right) ^ { 1 - n / q }
		\left( 1 + \frac { \sqrt { t } } { \rho ( x _ { 0 } ) }
		+\frac { \sqrt { t } } { \rho ( y _ { 0 } ) } \right) ^ { - N }
		exp( - c | x _{0}  - y _ { 0 } | ^ { 2 } / t ) .
	\end{align*}
\end{proof}

\subsection{Estimation for $ \alpha \in ( 0 , 1 ) $ } \label{sec 3.2}
In this section, we first investigate the regularities of $K _ { \alpha , t } ^ { L } ( \cdot , \cdot ) $. 
Below we provide gradient estimates for the fractional heat kernel associated with the variable $t$.
Define the operators:
\[ 	D_{\alpha,t}^{L,\beta}(f) : = t^\beta \partial_t^\beta \left( exp(-tL^{\alpha}) f \right) ,
\quad \alpha \in (0,1), \, \beta > 0. \]
Let $ D_{\alpha,t}^{L,\beta}(\cdot, \cdot) $ denote the integral kernels of the operator $ D_{\alpha,t}^{L,\beta} $. Then the following proposition can be derived.

\begin{proposition} \label{bdd D}
	Let $\alpha \in ( 0 , 1 ) , V \in B _ { q } , q > n / 2$ and $\beta > 0 $.
	\item[\rm(i)] 	For every $N > \alpha \beta $, there exists a constant ${ C } _ { N } > 0 $ such that
	\begin{equation} \label{equ bdd D}
		\left| D _ { \alpha , t } ^ { L , \beta } ( x , y ) \right|
		\leq \frac { C _ { N } t ^ { \beta } } { ( t ^ { 1 / ( 2 \alpha ) } + | x - y | )
			^ { n + 2 \alpha \beta } } \left( 1 + \frac { t ^ { 1 / ( 2 \alpha ) } } { \rho ( x ) }
		+ \frac { t ^ { 1 / ( 2 \alpha ) } } { \rho ( y ) } \right) ^ { - N } .
	\end{equation}
	\item[\rm(ii)]
	Let $0 < \delta ^ { \prime } \leq  \min \left\{ 2 \alpha , \delta _ { 0 } \right\} .$
	For $N > \alpha \beta $, there exists a constant $C _ { N } > 0 $ such that for
	$| h | \leq t ^ { 1 / ( 2 \alpha ) } $,
	\begin{equation} \label{smo D}
		\left| D _ { \alpha , t } ^ { L , \beta }  ( x + h , y )
		- D _ { \alpha , t } ^ { L , \beta }  ( x , y ) \right|
		\leq \frac { C _ { N } t ^ { \beta } \left( { | h | } / {  t ^ { 1 / ( 2 \alpha ) } } \right) ^ { \delta ' }   	\left( 1 + \frac { t ^ { 1 / ( 2 \alpha ) }  } { \rho ( x ) }
			+ \frac { t ^ { 1 / ( 2 \alpha ) }  } { \rho ( y ) } \right) ^ { - N } } { \left( t ^ { 1 / ( 2 \alpha ) } + | x - y | \right) ^ { n + 2 \alpha \beta } } .
	\end{equation}
	\item[\rm(iii)]
	Let $0 < \delta ^ { \prime } \leq \min \left\{ 2 \alpha , 2 \alpha \beta , \delta _ { 0 } \right\} .$ For every $N > \delta $, there exists a constant $ { C } _ { N }  $ such that
	\begin{equation} \label{RD}
		\left| \int _ { \mathbb { R } ^ { n } }  D _ { \alpha , t } ^ { L , \beta } ( x , y )  d y \right|
		\leq C _ { N } \frac { \left(  t ^ { 1 / ( 2 \alpha ) } / \rho ( x ) \right) ^ { \delta ' } }
		{ \left( 1 + t ^ { 1 / ( 2 \alpha ) } / \rho ( x ) \right) ^ { N } }
		, \quad x \in \mathbb { R } ^ { n } .
	\end{equation}
\end{proposition}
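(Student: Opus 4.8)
The plan is to obtain (i)--(iii) from the definition (\ref{10}) of the fractional $t$-derivative, reducing each statement to the corresponding integer-order estimate for $\partial_\tau^m K_{\alpha,\tau}^L$, $m=[\beta]+1$, and then performing the remaining $u$-integration by hand. The key preliminary step is the integer-order bound: for every integer $m\ge 1$ and every $N>0$,
\[
\big|\partial_\tau^m K_{\alpha,\tau}^L(x,y)\big|\ \lesssim\ \frac{C_N}{\big(\tau^{1/(2\alpha)}+|x-y|\big)^{n+2\alpha m}}\Big(1+\tfrac{\tau^{1/(2\alpha)}}{\rho(x)}+\tfrac{\tau^{1/(2\alpha)}}{\rho(y)}\Big)^{-N},
\]
together with its first-variable Hölder analogue for $|h|\le\tau^{1/(2\alpha)}$ (carrying the extra factor $(|h|/\tau^{1/(2\alpha)})^{\delta'}$, $0<\delta'\le\min\{2\alpha,\delta_0\}$) and its cancellation analogue $\big|\int_{\mathbb{R}^n}\partial_\tau^m K_{\alpha,\tau}^L(x,y)\,dy\big|\lesssim \tau^{-m}(\tau^{1/(2\alpha)}/\rho(x))^{\delta'}(1+\tau^{1/(2\alpha)}/\rho(x))^{-N}$. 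These I would derive by differentiating the subordination formula (\ref{subor}) in $\tau$, writing $\partial_\tau^m K_{\alpha,\tau}^L(x,y)=\int_0^\infty(\partial_\tau^m\eta_\tau^\alpha)(s)\,K_s^L(x,y)\,ds$, then combining the scaling from (\ref{subordinative}) with the decay of $\partial_\tau^m\eta_\tau^\alpha$ (polynomial $\lesssim s^{-1-\alpha m}$ for $s\gtrsim\tau^{1/\alpha}$, rapid for $s\lesssim\tau^{1/\alpha}$) and the Gaussian-type estimates of Propositions \ref{K bdd}, \ref{K smo}, \ref{Q bdd} for $K_s^L$, for $K_s^L(x+h,\cdot)-K_s^L(x,\cdot)$, and for $\int_{\mathbb{R}^n}Q_{s,m}^L\,dy$, splitting the $s$-integral at $s\sim|x-y|^2$; in the Hölder case one uses Proposition \ref{K smo} when $|h|<\sqrt s$ and the size bound together with $|h|<|x-y|/4$ when $|h|\ge\sqrt s$.

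Granting the integer-order estimates, part (i) follows by inserting them into $D_{\alpha,t}^{L,\beta}(x,y)=c_{m,\beta}\,t^\beta\int_0^\infty \partial_\tau^m K_{\alpha,\tau}^L(x,y)\big|_{\tau=t+u}\,u^{m-\beta-1}\,du$: since the weight factor is nonincreasing in $\tau$, it may be replaced by its value at $\tau=t$ and pulled out, after which one splits $\int_0^\infty du$ at $u\sim t$ and $u\sim|x-y|^{2\alpha}$. On $u\le t$ the denominator is comparable to $(t^{1/(2\alpha)}+|x-y|)^{n+2\alpha m}$, on $t\le u\le |x-y|^{2\alpha}$ to $|x-y|^{n+2\alpha m}$, and on $u\ge\max\{t,|x-y|^{2\alpha}\}$ to $u^{(n+2\alpha m)/(2\alpha)}$; each of the three integrals is a Beta-type integral, and using $m-\beta>0$ (hence $(t^{1/(2\alpha)}+|x-y|)^{2\alpha(m-\beta)}\ge t^{m-\beta}$) they combine to $\lesssim t^\beta(t^{1/(2\alpha)}+|x-y|)^{-(n+2\alpha\beta)}(\cdots)^{-N}$, i.e. (\ref{equ bdd D}). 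Running the same $u$-integration from the Hölder, resp. cancellation, version of the key step gives (\ref{smo D}), resp. (\ref{RD}); the hypotheses $N>\alpha\beta$, resp. $N>\delta$, and the bounds $\delta'\le\min\{2\alpha,\delta_0\}$, resp. $\delta'\le\min\{2\alpha,2\alpha\beta,\delta_0\}$, are precisely what keep these integrals finite and preserve the stated weight and decay — for instance, in (iii) the range $\delta'<2\alpha$ is that of the positive moments of the $\alpha$-stable subordinator ($\int_0^\infty s^p\,\eta_1^\alpha(s)\,ds<\infty$ iff $p<\alpha$) used when propagating the oscillation factor $(\sqrt s/\rho(x))^{\delta}$ through (\ref{subor}), while the $2\alpha\beta$ is forced by convergence of the $u$-integral at $u\to\infty$, and the ceiling $\delta'\le 2\alpha$ in (ii) reflects the intrinsic $2\alpha$-regularity of the fractional heat semigroup.

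The main obstacle is the \emph{improved} spatial exponent $n+2\alpha m$ (rather than just $n+2\alpha$) in the integer-order estimate. A naive argument — differentiate (\ref{subor}) $m$ times, integrate by parts so that $\partial_\tau^m K_{\alpha,\tau}^L=\tau^{-m}\int_0^\infty\eta_\tau^\alpha(s)\big(\text{linear combination of }Q_{s,j}^L,\ j\le m\big)\,ds$, and bound $|Q_{s,j}^L(x,y)|\lesssim s^{-n/2}e^{-c|x-y|^2/s}$ — only yields the weaker $|\partial_\tau^m K_{\alpha,\tau}^L(x,y)|\lesssim \tau^{1-m}(\tau^{1/(2\alpha)}+|x-y|)^{-(n+2\alpha)}(\cdots)^{-N}$, which is too weak to run the $u$-integration once $\beta\ge 1$ (that is, $m\ge 2$). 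Getting the sharp exponent requires exploiting the cancellation in the higher $\tau$-derivatives of the subordinator: the contributions $\tau^j s^{-1-\alpha j}$, $j<m$, to $\eta_\tau^\alpha$ are annihilated by $\partial_\tau^m$, so that $|\partial_\tau^m\eta_\tau^\alpha(s)|\lesssim s^{-1-\alpha m}$ for large $s$ — equivalently, the order-$2\alpha m$ vanishing at $z=0$ of the symbol $z^m e^{-z}$ of $(\tau L^\alpha)^m e^{-\tau L^\alpha}$, which could alternatively be used through the functional calculus as in \cite{huang}. Once this refined estimate is established, the rest of the proof is bookkeeping.
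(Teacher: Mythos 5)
The paper gives no proof of this proposition; it simply defers to \cite[Propositions 14--16]{li}. Your argument reconstructs what is almost certainly the intended route: start from the Marchaud-type formula (\ref{10}), reduce to the integer-order bound $|\partial_\tau^m K_{\alpha,\tau}^L(x,y)|\lesssim(\tau^{1/(2\alpha)}+|x-y|)^{-(n+2\alpha m)}\bigl(1+\tau^{1/(2\alpha)}/\rho(x)+\tau^{1/(2\alpha)}/\rho(y)\bigr)^{-N}$ together with its H\"older and cancellation variants, and then perform the $u$-integration by splitting at $u\sim t$ and $u\sim|x-y|^{2\alpha}$. Your observation that the improved spatial exponent $n+2\alpha m$ --- not $n+2\alpha$ --- is genuinely needed once $m\ge 2$ is correct and worth recording: with only the weaker bound $\tau^{1-m}(\tau^{1/(2\alpha)}+|x-y|)^{-(n+2\alpha)}$, the portion of the $u$-integral over $u\le t$ already produces $t\,|x-y|^{-(n+2\alpha)}$, which exceeds the target $t^\beta|x-y|^{-(n+2\alpha\beta)}$ by the factor $(|x-y|^{2\alpha}/t)^{\beta-1}$ when $|x-y|\gg t^{1/(2\alpha)}$ and $\beta>1$, and the $\rho$-weight cannot absorb this since in that range it is comparable to its value at $\tau=t$.

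The one step you assert without proof is the derivative bound $|\partial_\tau^m\eta_\tau^\alpha(s)|\lesssim s^{-1-\alpha m}$ for $s\gtrsim\tau^{1/\alpha}$. It is not contained in the list (\ref{subordinative}), which only records $\eta_\tau^\alpha(s)\lesssim\tau s^{-1-\alpha}$ and the matching two-sided estimate. The bound does hold, and for exactly the cancellation reason you describe: the one-sided $\alpha$-stable density admits the large-argument asymptotic expansion $\eta_1^\alpha(\sigma)\sim\sum_{k\ge 1}c_k\,\sigma^{-1-\alpha k}$, hence $\eta_\tau^\alpha(s)\sim\sum_k c_k\,\tau^k s^{-1-\alpha k}$, and $\partial_\tau^m$ kills the terms with $k<m$; equivalently, the Laplace symbol $\lambda^{\alpha m}e^{-\tau\lambda^\alpha}$ vanishes to order $\alpha m$ at $\lambda=0$. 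A complete writeup should either cite that expansion or, as you note, bypass the subordinator by writing $\partial_\tau^m K_{\alpha,\tau}^L=(-L^\alpha)^m e^{-\tau L^\alpha}$ and running the $H^\infty$-functional calculus for $\phi_m(z)=z^me^{-z}$ as in \cite{huang}, which encapsulates the same vanishing. Granting that integer-order lemma, the three $u$-integrations you lay out are routine and reproduce (\ref{equ bdd D})--(\ref{RD}) with the stated constraints on $\delta'$ and $N$.
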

\begin{proof}
	We omit the proof, as it follows similarly to \cite[Propositions 14–16]{li}.
\end{proof}

Next, we give the gradient estimate of $K _ { \alpha , t } ^ { L } ( \cdot , \cdot ) $ related with the spatial variables. Let $ D_{\alpha, t}^L := t^{1/(2\alpha)} \nabla_x exp(-tL^{\alpha}) $, and denote its kernel by $D_{\alpha, t}^L(x, y) := t^{1/(2\alpha)} \nabla_x K_{\alpha, t}^L(x, y)$.

\begin{proposition} \label{t x frac}
	Suppose $\alpha \in (0,1)$ and $V \in B _ { q }$ for some $q > n $.
	\item[\rm(i)] Suppose that $\left(a _ { i j } ( x )\right)$ satisfies the conditions $ (A1) \; \& \; (A2)$. For every $N > 0$, there exists a constant ${ C } _ { N } > 0 $ such that for all $x , y \in \mathbb { R } ^ { n } $ and $t > 0 $,
	\begin{equation*}
		\left| D _ { \alpha , t } ^ { L } ( x , y )  \right|
		\leq  \frac {  C_ {N} t }
		{ ( t ^ { 1 / (2 \alpha) } + | x - y | ) ^ { n + 2 \alpha } }
		\left( 1 + \frac { t ^ { 1 / (2 \alpha) } } { \rho ( x ) } +
		\frac { t ^ { 1 / (2 \alpha )} } { \rho ( y ) } \right) ^ { - N }.
	\end{equation*}
	\item[\rm(ii)] Suppose that $\left(a _ { i j } ( x )\right)$ satisfies the conditions $ (A1) \; \& \; (A2) \; \& \; (A3)$.
	Let $0 < \delta ^ { \prime } < \delta _ { 1 } : = 1 - n / q $. For every $N > 0$,
	there exists a constant $ { C } _ { N } > 0 $ such that for
	all $| h | < | x - y | / 4  $,
	\begin{align*}
		\left| D _ { \alpha , t } ^ { L } ( x + h , y )
		-  D _ { \alpha , t } ^ { L } ( x , y ) \right|
		\leq  \frac { C_ {N} t  \left(  { | h | } / { t ^ { 1 / ( 2 \alpha ) } } \right) ^ { \delta ' } \left( 1 + \frac { t ^ { 1 / (2 \alpha) } } { \rho ( x ) } +
			\frac { t ^ { 1 / (2 \alpha )} } { \rho ( y ) } \right) ^ { - N } } { ( t ^ { 1 / ( 2 \alpha ) } + | x - y | )  ^ { n +  2 \alpha  } } .
	\end{align*}
	\item[\rm(iii)] Let $\alpha \in ( 0 , 1 / 2 - n / ( 2 q ) ) $. For every $N > 0 $,
	\begin{align*}
		\left| D _ { \alpha , t } ^ { L } ( 1 ) ( x ) \right|
		\lesssim \min \left\{     \left( \frac { t ^ { 1 / ( 2 \alpha ) }  } { \rho ( x ) } \right)
		^ { 1 + 2 \alpha  } ,    \left( \frac {t ^ { 1 / ( 2 \alpha ) }  } { \rho ( x ) } \right)
		^ { - N } \right\}  .
	\end{align*}
\end{proposition}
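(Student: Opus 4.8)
The plan is to derive (i)--(iii) from the subordination formula (\ref{subor}) differentiated in the spatial variable,
\[
\nabla_x K^L_{\alpha,t}(x,y)=\int_0^\infty \eta_t^\alpha(s)\,\nabla_x K^L_s(x,y)\,ds ,
\]
so that each estimate reduces to feeding the $\alpha=1$ bounds of Section \ref{sec 3.1} into a one-dimensional integral in $s$ governed by the four properties of $\eta_t^\alpha$ in (\ref{subordinative}); the bookkeeping is that of \cite[Proposition 3.5]{wang} and \cite{li}. For (i) I would insert Lemma \ref{x K bdd}, split $\int_0^\infty$ at $s\simeq(t^{1/(2\alpha)}+|x-y|)^2$, use $\eta_t^\alpha(s)\lesssim t/s^{1+\alpha}$ on the lower part and $\eta_t^\alpha(s)\simeq t/s^{1+\alpha}$ on the upper part, move the weight $(1+\sqrt s/\rho)^{-N}$ from scale $\sqrt s$ to scale $t^{1/(2\alpha)}$ by Lemma \ref{special mV}(2) while shrinking $N$, and absorb the prefactor through $t^{1/(2\alpha)}\le t^{1/(2\alpha)}+|x-y|$. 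For (ii) with $|h|\le t^{1/(2\alpha)}$ I would run the same computation with Lemma \ref{x K smo}, writing the H\"older factor $(|h|/\sqrt s)^{\delta'}=(|h|/t^{1/(2\alpha)})^{\delta'}(t^{1/(2\alpha)}/\sqrt s)^{\delta'}$, pulling out the first factor and absorbing the second, whose negative power of $\sqrt s$ near $s=0$ is harmless because $\eta_1^\alpha$ decays faster than any power there (third line of (\ref{subordinative})); the small loss to $\delta'<\delta_1=1-n/q$ absorbs the weight transfer. For $t^{1/(2\alpha)}<|h|<|x-y|/4$ one bounds the difference by $|D^L_{\alpha,t}(x+h,y)|+|D^L_{\alpha,t}(x,y)|$ and invokes (i), using $|x+h-y|\simeq|x-y|$, the routine comparison of $\rho$ at nearby points from Lemma \ref{special mV}(2), and $(|h|/t^{1/(2\alpha)})^{\delta'}\ge1$.

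For (iii), write $D^L_{\alpha,t}(1)(x)=t^{1/(2\alpha)}\int_0^\infty\eta_t^\alpha(s)\left(\int_{\mathbb{R}^n}\nabla_x K^L_s(x,y)\,dy\right)ds$. When $t^{1/(2\alpha)}\ge\rho(x)$ the bound is immediate from integrating the pointwise estimate of (i) in $y$, which gives $\lesssim(1+t^{1/(2\alpha)}/\rho(x))^{-N}\le(t^{1/(2\alpha)}/\rho(x))^{-N}$. When $t^{1/(2\alpha)}<\rho(x)$ I would split the $s$-integral at $s=\rho(x)^2$; on $s\ge\rho(x)^2$ the factor $(1+\sqrt s/\rho(x))^{-N}$ in Lemma \ref{x K bdd} already produces the power $(t^{1/(2\alpha)}/\rho(x))^{1+2\alpha}$ (endpoint-dominated, with $\eta_t^\alpha(s)\simeq t/s^{1+\alpha}$). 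On $0<s<\rho(x)^2$ that factor is useless and the required smallness must come from the cancellation hidden in $\int_{\mathbb{R}^n}\nabla_x K^L_s(x,y)\,dy=\nabla_x\exp(-sL)(1)(x)$: integrating the perturbation formula (\ref{pertur}) over $y$ and using $\int_{\mathbb{R}^n}h_\tau(x,y)\,dy=1$ yields
\[
\int_{\mathbb{R}^n}\nabla_x K^L_s(x,y)\,dy=-\int_0^{s/2}\!\!\int_{\mathbb{R}^n}V(w)\,\nabla_x K^L_{s-\sigma}(x,w)\,dw\,d\sigma-\int_0^{s/2}\!\!\int_{\mathbb{R}^n}V(w)\,\nabla_x K^L_\sigma(x,w)\,dw\,d\sigma .
\]

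The key step, which I expect to be the main obstacle, is to convert the last display into the pointwise bound $|\nabla_x\exp(-sL)(1)(x)|\lesssim s^{-1/2}(\sqrt s/\rho(x))^{2-n/q}$ valid for $\sqrt s<\rho(x)$. For this I would combine the Gaussian bound of Lemma \ref{x K bdd} with the \emph{sharp} reverse-H\"older estimate $\int_{B(x,r)}V\lesssim r^{n-2}(r/\rho(x))^{2-n/q}$ for $r<\rho(x)$, which follows from Lemma \ref{special mV}(1) with $R=\rho(x)$ together with the identity $\rho(x)^{2-n}\int_{B(x,\rho(x))}V=1$; this gives $\int_{\mathbb{R}^n}V(w)|\nabla_x K^L_\tau(x,w)|\,dw\lesssim\tau^{-3/2}(\sqrt\tau/\rho(x))^{2-n/q}$, whose integral in $\tau$ over $(0,s)$ converges precisely because $q>n$, producing the claimed pointwise bound on $\nabla_x\exp(-sL)(1)$. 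Feeding this into the subordination integral, the hypothesis $\alpha<1/2-n/(2q)$ is exactly the inequality $1+2\alpha<2-n/q$ that makes $\int_{t^{1/\alpha}}^{\rho(x)^2}\eta_t^\alpha(s)\,s^{-1/2}(\sqrt s/\rho(x))^{2-n/q}\,ds$ endpoint-dominated at $s\simeq\rho(x)^2$, so that after multiplying by $t^{1/(2\alpha)}$ one recovers exactly $(t^{1/(2\alpha)}/\rho(x))^{1+2\alpha}$; the leftover range $0<s<t^{1/\alpha}$ contributes the higher, hence smaller, power $(t^{1/(2\alpha)}/\rho(x))^{2-n/q}$ by the scaling $\eta_t^\alpha(s)=t^{-1/\alpha}\eta_1^\alpha(s t^{-1/\alpha})$ and the integrability $\int_0^\infty r^{-\gamma}\eta_1^\alpha(r)\,dr<\infty$. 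The delicate point I would check throughout is that it is genuinely the exponent $2-n/q$ --- not the weaker generic exponent of Lemma \ref{lemma} --- that is available for $\nabla_x\exp(-sL)(1)$, since this sharper exponent is precisely what matches and forces the stated range of $\alpha$.
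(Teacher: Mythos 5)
Your plan reproduces the paper's proof in all essentials: parts (i)--(ii) are obtained by feeding Lemma \ref{x K bdd} and Lemma \ref{x K smo} into the subordination formula (\ref{subor}) and bookkeeping exactly as in \cite{li,wang}, and part (iii) is proved by splitting the $s$-integral at $\rho(x)^2$, bounding the tail by the decay of $\eta_t^\alpha$ together with the Gaussian integral, and extracting the cancellation on $s<\rho(x)^2$ from the perturbation formula (\ref{pertur}) combined with the sharp reverse--H\"older bound $\int_{B(x,r)}V\lesssim r^{n-2}(r/\rho(x))^{2-n/q}$ coming from Lemma \ref{special mV}(1); you correctly identify that $\alpha<1/2-n/(2q)$ is precisely the inequality $1+2\alpha<2-n/q$ making the subordination integral endpoint-dominated. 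The only cosmetic difference is that the paper differentiates the $h$-factor of the perturbation identity and uses the Gaussian gradient bound for $\nabla_x h_s$, whereas you differentiate the $K^L$-factors via Lemma \ref{x K bdd}; and in (i)--(ii) the paper takes the minimum of two global bounds rather than splitting the $s$-integral at a scale — both are equivalent.
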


\begin{proof}
	For (i), by the subordinative formula (\ref{subor}) and Lemma \ref{x K bdd}, we have
	\begin{align*}
		\left| \nabla _ { x } K _ { \alpha , t } ^ { L } ( x , y ) \right|
		& \leq \int _ { 0 } ^ { \infty }
		\frac{ exp( - c | x - y| ^ { 2 } / (t ^ { 1 / \alpha } u ) )   }
		{ t ^ {(n+1) / ( 2 \alpha )  }   u ^ { ( n + 3 ) / 2 + \alpha }   }  \\ 
		& \quad \times  \left( 1 + \frac { t ^ { 1 / (2\alpha) } \sqrt { u } } { \rho ( x ) } \right) ^ { - N }  	\left( 1 + \frac { t ^ { 1 / (2\alpha) } \sqrt { u } } { \rho ( y ) } \right) ^ { - N } d u   \\
		&\leq  \left( \frac { t ^ { 1 / (2\alpha) } } { \rho ( x ) } \right) ^ { - N }
		\left( \frac { t ^ { 1 / (2\alpha) }  } { \rho ( y ) } \right) ^ { - N }
		\frac { t ^ { 1 + N / \alpha } } { | x - y | ^ { 2 \alpha + 2 N + n + 1 } } .
	\end{align*}
	
	On the other hand, by Lemma \ref{x K bdd} and changing variables $\tau = s / t ^ { 1 / \alpha } $, we obtain
	\begin{align*}
		\left| \nabla_{x} K _ { \alpha , t } ^ { L } ( x , y ) \right|
		& \leq
		\int _ { 0 } ^ { \infty }  \frac{ C _ { N } s ^ { - ( n + 1 ) / 2 } \eta _ { 1 } ^ { \alpha  } \left( { s } / { t^{1/\alpha} } \right)  }{ t ^ { 1 / \alpha} \left( 1 + { \sqrt { s } } / { \rho ( x ) } \right) ^ {  N } 	\left( 1 +  { \sqrt { s } } / { \rho ( y ) } \right) ^ {  M } }  d s   \\
		& \leq \frac { C _ { N } \left( { t ^ { 1 / (2\alpha) }  } / { \rho ( x ) } \right) ^ { - N } } { t ^ { ( n + 1 ) / (2\alpha) } \left(  { t ^ { 1 / (2\alpha) }  } / { \rho ( y ) } \right) ^ {  N } } .
	\end{align*}
	
	Finally,
	\begin{align*}
	& \left(  { t ^ { 1 / (2\alpha) }  } / { \rho ( x ) } \right) ^ { N }
	\left(  { t ^ { 1 / (2\alpha) }  } / { \rho ( y ) } \right) ^ { N }
	\left| \nabla_{x} K _ { \alpha , t } ^ { L } ( x , y ) \right|  \\
	& \leq  \min \left\{ \frac{ C _ { N } }{ t ^ {  ( n + 1 ) / (2\alpha) } },
	\frac { C _ { N }  t ^ { 1 + N / \alpha } } { | x - y | ^ { n + 1 + 2 N + 2 \alpha } } \right\} .
	\end{align*}

	The arbitrariness of $ N $ indicates that
	\[\left| t ^{ 1/ (2 \alpha ) } \nabla _ { x } K _ { \alpha , t } ^ { L } \left( x _ { 0 } , y _ { 0 } \right) \right|
	\leq  \frac { C_ {N} t \left( 1 +  { t ^ { 1 / (2 \alpha) } } / { \rho ( x ) }  \right) ^ { - N }
		\left( 1 +  { t  ^ { 1 / (2 \alpha ) } } / { \rho ( y ) } \right) ^ { - N } } { ( t ^ { 1 / (2 \alpha) } + | x - y | ) ^ { n + 2 \alpha } }.  \]
	
	For (ii), by the subordinative formula (\ref{subor}) and Lemma \ref{x K smo}, we can obtain
	\begin{align*}
		\left| \nabla _ { x }  K _ { \alpha , t } ^ { L } ( x + h , y )
		- \nabla _ { x }  K _ { \alpha , t } ^ { L } ( x , y ) \right|
		\leq \frac { C _ { N } t ^ { 1 + N / \alpha } \left(  { | h | } / { | x - y | }  \right) ^ { \delta ^ { \prime } } \left(  \frac{ t ^ { 1 / ( 2 \alpha ) } } { \rho ( y ) } \right) ^ { - N } } { | x - y | ^ { 2 \alpha  + 2 N + n + 1 }
			\left(  { t ^ { 1 / ( 2 \alpha ) } } / { \rho ( x ) } \right) ^ {  N } } .
	\end{align*}

	On the other hand, we can deduce from Lemma \ref{x K smo} that
	\begin{align*}
		\left| \nabla _ { x }  K _ { \alpha , t } ^ { L } ( x + h , y )
		- \nabla _ { x }  K _ { \alpha , t } ^ { L } ( x , y ) \right|
		\leq \frac {C _ { N } \left(  { | h | } / {  t ^ { 1 / ( 2 \alpha ) }  } \right) ^ { \delta ^ { \prime } } 	\left(  { t ^ { 1 / ( 2 \alpha ) } } / { \rho ( y ) } \right) ^ { - N } } { t ^ { ( n + 1 ) / ( 2 \alpha ) } 	\left(  { t ^ { 1 / ( 2 \alpha ) } } / { \rho ( x ) } \right) ^ { N } } .
	\end{align*}
	
	Finally, since $N$ was arbitrary, we conclude that (ii) holds.

	At last, we will divide the proof of (iii) into two cases.
	
	Case 1: $t ^ { 1 / ( 2 \alpha ) } > \rho ( x ) $. By (i) of Proposition \ref{t x frac}, we use a direct computation to obtain
	\begin{align*}
		\left| t ^ { 1 / ( 2 \alpha ) } \nabla _ { x } \exp\left( - t L ^ { \alpha  } \right) ( 1 ) ( x ) \right| \lesssim  \left(  \frac { t ^ { 1 / ( 2 \alpha ) } } { \rho ( x ) } \right) ^ { - N }  .
	\end{align*}
	
	Case 2: $ t ^ { 1 / ( 2 \alpha ) } \leq \rho ( x ) $. It follows from (\ref{subor}) that
	\[t ^ { 1 / ( 2 \alpha ) } \nabla _ { x } \exp\left( - t L ^ { \alpha  } \right) ( 1 ) ( x )
	= t ^ { 1 / ( 2 \alpha ) } \nabla _ { x } \int _ { \mathbb{ R } ^ { n } }
	K _ { \alpha , t } ^ { L } ( x , y ) d y = I _ { 1 } + I _ { 2 } ,\]
	where
	\[\left\{ \begin{aligned}
		&I _ { 1 } : = t ^ { 1 / ( 2 \alpha ) } \int _ { { \rho ( x ) } ^ { 2 } } ^ { \infty }
		\eta _ { t } ^ { \alpha } ( s )  \left( \int _ { \mathbb { R } ^ { 2 } } \nabla _ { x } K _ { s } ^ { L } ( x , y ) d y \right) d s ; \\
		&I _ { 2 } : = t ^ { 1 / ( 2 \alpha ) } \int _ { 0 } ^ { { \rho ( x ) } ^ { 2 } }
		\eta _ { t } ^ { \alpha } ( s )  \left( \int _ { \mathbb { R } ^ { 2 } } \nabla _ { x } K _ { s } ^ { L } ( x , y ) d y \right) d s .
	\end{aligned} \right.\]
	By Lemma \ref{x K bdd}, taking $N$ sufficiently large, we can use the change of variable to obtain
	\begin{equation} \label{I1}
		\begin{aligned}
			I _ { 1 } & \lesssim t ^ { 1 / ( 2 \alpha ) } \int _ { \rho ^ { 2 } ( x ) } ^ { \infty } \eta _ { t } ^ { \alpha } ( s ) \int_{ 0 }^{\infty} \frac{ \exp\left( - c u^{2} \right) } {u^{ 1 - n }}du \frac { d s } { \sqrt { s } }  \\
			& \lesssim  \int _ { \rho ^ { 2 } ( x ) } ^ { \infty }
			\frac { t ^ { 1 + 1 / ( 2 \alpha ) } } { s ^ { \alpha  + 3 / 2 } } d s \lesssim
			\left( \frac { t ^ { 1 / ( 2 \alpha ) } } { \rho ( x ) } \right) ^ { 1 + 2 \alpha } .
		\end{aligned}
	\end{equation}
	For $I _ { 2 } $, it follows from the perturbation formula (see \cite{engel}, 2.12 Proposition), that
	\[h _ { u } ( x , y ) - K _ { u } ^ { L } ( x , y )
	= \int _ { 0 } ^ { u } \int _ { \mathbb { R } ^ { n } }  h _ { s } ( x , z ) V ( z )
	K _ { u - s } ^ { L} ( z , y ) d z d s , \]
	where $ h _ { u } ( x , y ) $ is the integral kernels of the semigroup
	$\{ \exp\left( - u L  _ { 0 } \right) \} _ { u > 0 } $ on $L ^ { 2 } ( \mathbb{ R }^{n+1}_{+} )$ generated by $L _ { 0 } $. Then, for $\delta = 2 - n / q > 1 $,
	\begin{align*}
		\left| \sqrt { u } \nabla _ { x }  \exp\left( - u L \right) ( 1 ) ( x ) \right|
		& = \left| \sqrt { u }  \int _ { \mathbb { R } ^ { n } } \nabla _ { x }
		\left( K_{u}^{L} (x,y) - h_{u} (x,y)  \right) d y \right|  \\
		& \lesssim \int _ { 0 } ^ { u } \sqrt { \frac { u } { s } }  \int _ { \mathbb { R } ^ { n } }
		 \frac{ exp ( - c |x-z|^{2} / s ) }{ s ^ { n / 2} } V (z) d z d s   \\
		& \lesssim \left( \frac { \sqrt { u } } { \rho ( x ) } \right) ^ { \delta }
	\end{align*}
	
	Therefore, noting that $0 < 2 \alpha < 1 - n / q $, we can use the change of variables to obtain
	\begin{align*}
		I _ { 2 }
		\lesssim t ^ { 1 / ( 2 \alpha ) } \int _ { 0 } ^ { \rho ^ { 2 } ( x ) }
		\frac { 1 } { t ^ { 1 / \alpha } } \eta _ { 1 } ^ { \alpha }
		\left( s / t ^ { 1 / \alpha } \right) \frac { 1 } { \sqrt { s } }
		\left( \frac { \sqrt { s } } { \rho ( x ) } \right) ^ { \delta } d s
		\lesssim \left( \frac { t ^ { 1 / ( 2 \alpha )  } } { \rho ( x ) } \right) ^ { 1 + 2 \alpha } ,
	\end{align*}
	which, together with (\ref{I1}), implies that
	\begin{equation*}
		\left|  D _ { \alpha , t } ^ { L } ( 1 ) ( x ) \right|  \lesssim
		\left( \frac { t ^ { 1 / ( 2 \alpha )  } } { \rho ( x ) } \right) ^ { 1 + 2 \alpha }.
	\end{equation*}
\end{proof}

Let $ \widetilde{D}_{\alpha, t} ^ {L, \beta } (\cdot , \cdot )$ denote the kernel of
$ t^{\beta / \alpha } L^{ \beta } exp( - t L ^ { \alpha } )  $.
\begin{lemma} \label{bdd}
	Let $\alpha \in ( 0 , 1 ) $, $ \beta > 0 $, and $V \in B _ { q } $, $q > n / 2 $.
	\item[\rm(i)] For every $N > 0$, there exists a constant ${ C } _ { N } > 0 $ such that
	\begin{equation} \label{bdd D/}
		\left| \widetilde{D}_{\alpha, t} ^ {L, \beta } (x,y) \right|
		\leq \frac { C _ { N } t^{\beta / \alpha} } { ( t ^ { 1 / (2\alpha ) } + | x - y |) ^ { n + 2 \beta } }
		\left( 1 + \frac {  t ^ { 1 / (2\alpha ) }  } { \rho ( x ) } +
		\frac {  t ^ { 1 / (2\alpha ) }  } { \rho ( y ) } \right) ^ { - N }.
	\end{equation}
	\item[\rm(ii)] Let $0 < \delta ^ { \prime } \leq \delta $. For $N > 0 $, there exists a constant $C _ { N } > 0 $ such that for all $| h | \leq t ^ { 1 / (2 \alpha)} $,
	\begin{equation}
		\left|  \widetilde{D}_{\alpha, t} ^ {L, \beta }  ( x + h , y )
		-  \widetilde{D}_{\alpha, t} ^ {L, \beta }  ( x , y ) \right|
		\leq \frac { C _ { N } t^{\beta / \alpha} \left(  { | h | } / {  t ^ { 1 / ( 2\alpha ) } } \right) ^ { \delta ^ { ' } } \left( 1 + \frac {  t ^ { 1 / (2\alpha ) }  } { \rho ( x ) } +
		\frac {  t ^ { 1 / (2\alpha ) }  } { \rho ( y ) } \right) ^ { - N } } { ( t ^ { 1 / (2\alpha ) } + | x - y |) ^ { n + 2 \beta } } .
	\end{equation}
	\item[\rm(iii)] Let $0 < \delta ^ { \prime } \leq \delta < \min \{ \delta_{ 0 }, 2 \beta \}$, where
	$ \delta_{ 0 }$ is a parameter in Proposition \ref{bdd frac diff} . For any $N > \delta$, there exists a constant $C _ { N } > 0 $ such that
	\begin{equation} \label{R bdd D/}
		\left| \int _ { \mathbb { R } ^ { n } }
		\widetilde{D}_{\alpha, t} ^ {L, \beta }  ( x , y ) d y \right|
		\leq C _ { N } \frac { \left( t ^ { 1 / (2\alpha ) } / \rho ( x ) \right) ^ { \delta ^ { ' } } }
		{ \left( 1 +  t ^ { 1 / (2\alpha ) } / \rho ( x ) \right) ^ { N } } .
	\end{equation}
\end{lemma}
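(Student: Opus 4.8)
The plan is to deduce Lemma~\ref{bdd} from Proposition~\ref{bdd D} by showing that, up to a unimodular constant, $\widetilde{D}_{\alpha,t}^{L,\beta}$ is nothing but $D_{\alpha,t}^{L,\beta/\alpha}$. Set $\gamma := \beta/\alpha > 0$ and $m := [\gamma]+1$. The first and main step is the operator identity
\[
  t^{\beta/\alpha} L^{\beta}\exp(-tL^{\alpha}) \;=\; e^{-i\pi\beta/\alpha}\, t^{\beta/\alpha}\,\partial_t^{\beta/\alpha}\exp(-tL^{\alpha}),
\]
equivalently $\widetilde{D}_{\alpha,t}^{L,\beta}(x,y) = e^{-i\pi\beta/\alpha}\, D_{\alpha,t}^{L,\beta/\alpha}(x,y)$ for a.e.\ $x,y$. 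To obtain it I would write $L^{\beta}=(L^{\alpha})^{\gamma}$, insert into the definition~(\ref{10}) of $\partial_t^{\gamma}$ the semigroup identity $\partial_t^{m}\exp(-(t+u)L^{\alpha}) = (-1)^{m}(L^{\alpha})^{m}\exp(-tL^{\alpha})\exp(-uL^{\alpha})$, pull $(L^{\alpha})^{m}\exp(-tL^{\alpha})$ out of the $u$-integral, and evaluate $\int_0^{\infty}\exp(-uL^{\alpha})\,u^{m-\gamma-1}\,du = \Gamma(m-\gamma)(L^{\alpha})^{-(m-\gamma)}$ by the spectral theorem (legitimate on the range of $\exp(-tL^{\alpha})$, since the singularity of $u^{m-\gamma-1}$ at $u=0$ is absorbed by the semigroup). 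Collecting $(L^{\alpha})^{m}(L^{\alpha})^{-(m-\gamma)}=(L^{\alpha})^{\gamma}=L^{\beta}$ and the phase $(-1)^{m}e^{-i\pi(m-\gamma)}=e^{i\pi\gamma}$ gives $\partial_t^{\gamma}\exp(-tL^{\alpha}) = e^{i\pi\gamma}L^{\beta}\exp(-tL^{\alpha})$, which is the claim. The interchange of the $u$-integration with $L$, and the absolute convergence of~(\ref{10}), are justified by the pointwise bounds on $\partial_t^{m}K_{\alpha,s}^{L}(\cdot,\cdot)$ produced by the subordination formula~(\ref{subor}) (as in the proof of Proposition~\ref{bdd D}): near $u=0$ the integrand is controlled by $u^{m-\gamma-1}$ with $m-\gamma>0$, while for large $u$ it decays like $u^{-n/(2\alpha)-\gamma-1}$, so Fubini and the spectral calculus apply.

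With this identity established, the rest is bookkeeping. Since $|e^{-i\pi\beta/\alpha}|=1$, parts (i)--(iii) of Proposition~\ref{bdd D}, read with its parameter $\beta$ replaced by $\gamma=\beta/\alpha$, transfer verbatim to $\widetilde{D}_{\alpha,t}^{L,\beta}$: one has $t^{\gamma}=t^{\beta/\alpha}$, $n+2\alpha\gamma=n+2\beta$ and $2\alpha\gamma=2\beta$, so the three displayed bounds of Proposition~\ref{bdd D} become exactly the bounds (i)--(iii) of Lemma~\ref{bdd}. The hypothesis ``$N>\alpha\gamma=\beta$'' of Proposition~\ref{bdd D}(i) is stronger than ``$N>0$'' in Lemma~\ref{bdd}(i), but the latter follows from the former because $\big(1+t^{1/(2\alpha)}/\rho(x)+t^{1/(2\alpha)}/\rho(y)\big)^{-N}$ is nonincreasing in $N$, so the estimate for large $N$ yields the estimate for every $N>0$ with the same constant; the admissible ranges of the smoothness exponent in (ii) and (iii) likewise carry over after the substitution.

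The only genuine difficulty is the first step --- making the functional-calculus manipulation of the fractional/negative power $(L^{\alpha})^{-(m-\gamma)}$ rigorous and checking that~(\ref{10}) converges absolutely so that Fubini is applicable; no new kernel estimate is needed beyond those already recorded in Section~\ref{sec 3}. As an alternative, when $\beta\in(0,\alpha)$ one may skip $\partial_t^{\gamma}$ and instead substitute the subordination-type formula coming from~(\ref{def frac L}) for $L^{\beta}\exp(-tL^{\alpha})$ directly, then estimate via Proposition~\ref{bdd frac diff}; the general case is reduced to this one by writing $L^{\beta}=(L^{\alpha})^{k}L^{\beta-k\alpha}$ with $k=[\beta/\alpha]$ and using $L^{\alpha}\exp(-tL^{\alpha})=-\partial_t\exp(-tL^{\alpha})$ repeatedly.
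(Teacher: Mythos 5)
Your proposal is correct in its main idea and takes a genuinely different route from the paper. The paper does not prove or use the operator identity $\widetilde{D}_{\alpha,t}^{L,\beta}=e^{-i\pi\beta/\alpha}D_{\alpha,t}^{L,\beta/\alpha}$; instead it manipulates the Balakrishnan-type integral representation of $L^{\beta}$. Concretely, for $\beta\geq\alpha$ the paper writes $L^{\beta}=L^{\alpha[\beta/\alpha]}L^{\alpha l}$ with $l=\beta/\alpha-[\beta/\alpha]$, expands $L^{\alpha l}\exp(-tL^{\alpha})$ by the defining formula (\ref{def frac L}) to get the double-integral representation (\ref{D2}), then converts $L^{\alpha[\beta/\alpha]}\partial_{r}$ into $\partial_{r}^{1+[\beta/\alpha]}$ and estimates that integral directly using Proposition~\ref{bdd D} with an \emph{integer} derivative order. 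For $\beta\in(0,\alpha)$ the paper treats the case separately (part (i) by citing \cite{huang 2}, part (iii) by a hands-on computation with the subordinator $\eta_{1}^{\alpha}$ and Lemma~\ref{lemma}). Your reduction collapses both cases into one: after verifying the phase identity via spectral calculus, you apply Proposition~\ref{bdd D} once with the \emph{fractional} order $\gamma=\beta/\alpha$, and parts (i)--(iii) drop out by relabelling. That is cleaner and avoids all the case splitting, at the price of having to justify the Fubini/spectral-calculus manipulation (interchange of $\int_{0}^{\infty}\cdots u^{m-\gamma-1}\,du$ with the resolution of the identity, and passing from the operator identity to the kernel identity); the paper sidesteps this by only ever working with integrals of explicit kernel bounds.

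Two remarks worth making precise in a write-up. First, the justification you sketch (``the singularity of $u^{m-\gamma-1}$ at $u=0$ is absorbed by the semigroup'') points at the wrong place: since $m-\gamma\in(0,1]$ the power $u^{m-\gamma-1}$ is locally integrable on its own. The possible delicacy is at $u\to\infty$ when $0$ lies in the spectrum of $L^{\alpha}$, where $\int_{0}^{\infty}e^{-u\lambda}u^{m-\gamma-1}\,du=\Gamma(m-\gamma)\lambda^{-(m-\gamma)}$ is unbounded; one must not detach the factor $(L^{\alpha})^{-(m-\gamma)}$ as a standalone operator but evaluate the full symbol $\lambda^{m}e^{-t\lambda}\cdot\lambda^{-(m-\gamma)}=\lambda^{\gamma}e^{-t\lambda}$ jointly, which is bounded; Tonelli then justifies the interchange. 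Second, after the substitution $\beta\mapsto\gamma=\beta/\alpha$, the admissible range in Proposition~\ref{bdd D}(iii) becomes $\delta'\leq\min\{2\alpha,\,2\beta,\,\delta_{0}\}$, which carries an extra $2\alpha$ constraint compared with Lemma~\ref{bdd}(iii)'s stated $\delta'<\min\{\delta_{0},2\beta\}$; for $\alpha<1/2$ these need not coincide, so your ``carries over verbatim'' claim for (iii) requires either checking that the cited Proposition in \cite{li} actually allows the wider range, or accepting a slightly smaller $\delta'$-range (which is what the paper itself implicitly does when it invokes Proposition~\ref{bdd D} for the $\beta\geq\alpha$ case). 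The identity itself, the reduction for (i)--(ii), and the monotonicity-in-$N$ observation are all correct.
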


\begin{proof}
	
	Due to the definition of the fractional operator (\ref{def frac L}), the proof of this lemma is divided into two cases: $ 0 < \beta < \alpha $ and $ \beta \geq \alpha $.

	For (i), when $\alpha \in (0,1)$, $ \beta \in ( 0 , \alpha ) $, the proof is similar to that of \cite[Proposition 2.11]{huang 2}.
	When $ \beta \geq \alpha $, let $ l \in [ 0 , 1 )$ such that
	$ l = \beta/\alpha - [ \beta/\alpha ] $. Since
	\begin{align} \label{D2}
		\widetilde{D}_{\alpha, t} ^ {L, \beta }
		= t^{\beta / \alpha} L ^ { \alpha [ \beta / \alpha ] }
		\int_{ 0 }^{\infty} \int_{0}^{s} \partial_{ r } exp( - ( t + r ) L ^ { \alpha } ) d r \frac{ d s }{ s ^ { 1 + l } } ,
	\end{align}
	then, by Proposition \ref{bdd D}, we have
	\begin{align*}
		\left| \widetilde{D}_{\alpha, t} ^ {L, \beta } (x,y) \right|
		& \leq  \int_{ 0 }^{\infty} \frac{ C_{ N } t^{\beta / \alpha} \left( 1 +  {  ( t + r ) ^ { 1 / (2\alpha ) }  } /  { \rho ( x ) } +
			{  ( t + r ) ^ { 1 / (2\alpha ) }  } / { \rho ( y ) } \right) ^ { - N } }
		{ \left( (r+t)^ { 1 / (2\alpha) } + |x-y| \right) ^ { n + 2\alpha ( 1 + [ \beta / \alpha ] ) }}
		\frac{ d r }{ r ^ { l } }  \\
		& \leq  I + II  ,
	\end{align*}
	where
	\[\left\{ \begin{aligned}
		&I  : = \int_{ 0 }^{ t + |x-y| ^ { 2 \alpha } } \frac{ C_{ N } t^{\beta / \alpha} \left( 1 +  {  t ^ { 1 / (2\alpha ) }  } / { \rho ( x ) } + { t ^ { 1 / (2\alpha ) }  } / { \rho ( y ) } \right) ^ { - N } }
		{ \left( r + t + |x-y| ^ { 2 \alpha } \right) ^ { n / ( 2\alpha ) +  1 + [ \beta / \alpha ] }}
		\frac{ d r }{ r ^ { l } }   ; \\
		&I I : = \int_{ t + |x-y| ^ { 2 \alpha } }^{\infty} \frac{ C_{ N } t^{\beta / \alpha} \left( 1 +  {  t ^ { 1 / (2\alpha ) }  } / { \rho ( x ) } + { t ^ { 1 / (2\alpha ) }  } / { \rho ( y ) } \right) ^ { - N }  }
		{ \left( r + t + |x-y| ^ { 2 \alpha } \right) ^ { n / ( 2\alpha ) +  1 + [ \beta / \alpha ] }}
		\frac{ d r }{ r ^ { l } }  .
	\end{aligned} \right.\]
	For $ I $, a direct calculus gives
	\begin{align*}
		I \leq   \frac { C_{ N } t^{\beta / \alpha} } { ( t ^ { 1 / (2\alpha ) } + | x - y |) ^ { n + 2 \beta } }
		\left( 1 + \frac {  t ^ { 1 / (2\alpha ) }  } { \rho ( x ) } +
		\frac { t ^ { 1 / (2\alpha ) }  } { \rho ( y ) } \right) ^ { - N } .
	\end{align*}
	For $ I I $, choose $ \lambda $ such that $ 1 - l < \lambda < 1 + n / ( 2 \alpha ) + [ \beta / \alpha ] $. Then
	\begin{align*}
		I I  & \leq \frac{ C_{ N } t^{\beta / \alpha}  \left( 1 + {  t ^ { 1 / (2\alpha ) }  } / { \rho ( x ) } + { t ^ { 1 / (2\alpha ) }  } / { \rho ( y ) } \right) ^ { - N } }
		{ (t + |x-y| ^ { 2 \alpha } ) ^ { n / ( 2\alpha ) +  1 + [ \beta / \alpha ] - \lambda } }
		\int_{ t + |x-y| ^ { 2 \alpha } }^{\infty}  r ^ { - \lambda - l } d r  \\
		& \leq   \frac { C_{ N } t^{\beta / \alpha} } { ( t ^ { 1 / (2\alpha ) } + | x - y |) ^ { n + 2 \beta } }
		\left( 1 + \frac {  t ^ { 1 / (2\alpha ) }  } { \rho ( x ) } +
		\frac { t ^ { 1 / (2\alpha ) }  } { \rho ( y ) } \right) ^ { - N } .
	\end{align*}
	
	For (ii), when $\alpha \in (0,1)$, $ \beta \in ( 0 , \alpha ) $, it follows from the semigroup property and Proposition \ref{bdd D}, (\ref{def frac L}) that
	\begin{align*}
		& \left| \widetilde{D}_{\alpha, t} ^ {L, \beta } (x+h,y) -  \widetilde{D}_{\alpha, t} ^ {L, \beta } (x,y)\right|   \\
		& \leq C   \int_{ 0 }^{\infty} \int_{0}^{s}
		\left| \partial_{ r }  K_{ \alpha, t+r }^{L} (x+h,y)
		- \partial_{ r }  K_{ \alpha, t+r }^{L} (x,y) \right| \frac{t^{\beta / \alpha} drds}{ s ^ { 1 + \beta / \alpha } }   .
	\end{align*}
	Similarly to the proof in (i), we can obtain
	\begin{align}\label{1}
		\left|  \widetilde{D}_{\alpha, t} ^ {L, \beta }  ( x + h , y )
		-  \widetilde{D}_{\alpha, t} ^ {L, \beta }  ( x , y ) \right|
		\leq \frac { C _ { N } t^{\beta / \alpha} \left(  { | h | } / {   t ^ { 1 / ( 2 \alpha ) }  } \right) ^ { \delta ^ { ' } } \left( 1 + \frac { t ^ { 1 / (2\alpha) } } { \rho ( x ) }
			+ \frac { t ^ { 1 / (2\alpha) } } { \rho ( y ) } \right) ^ { - N } } { ( t ^ { 1 / (2\alpha ) } + | x - y |) ^ { n + 2 \beta } } .
	\end{align}
	When $ \beta \geq \alpha $, similarly to the proof of (i), by (\ref{D2}) we can obtain (\ref{1}) as well.
	
	For (iii), when $\alpha \in (0,1)$, $ \beta \in ( 0 , \alpha ) $, the subordinative formula (\ref{subor}) and (\ref{def frac L}) give
	\begin{align*}
		\widetilde{D}_{\alpha, t} ^ {L, \beta }
		\simeq t^{\beta / \alpha}
		\int_{ 0 }^{\infty} \int_{0}^{s}  \partial_{u}
		\int_{ 0 }^{\infty} exp(- wL) \eta_{ t+u }^{\alpha} (w)d w du  \frac{ds}{s^{1+\beta / \alpha}} ,
	\end{align*}
	which, together with the change of variable $\tau = {w} / {(t+u)^{1/{\alpha}}}$, implies that
	\begin{align*}
		\widetilde{D}_{\alpha, t} ^ {L, \beta }
		& \simeq \int_{ 0 }^{\infty} t^{\beta / \alpha} \int_{ 0  }^ { s } \int_{0}^{\infty} exp(- (t+u) ^{ 1/\alpha} \tau L )
		(t+u) ^{ 1/\alpha - 1} \tau L  \eta_{ 1 }^{\alpha} (\tau)   \frac{d \tau d u ds}{s^{1+\beta / \alpha}}.
	\end{align*}
	Then by Lemma \ref{lemma} and taking $N>\delta ' $, we can obtain
	\begin{align*}
		& \left| \int _ { \mathbb { R } ^ { n } } \widetilde{D}_{\alpha, t} ^ {L, \beta }  ( x , y ) d y \right|
		=  C_{\alpha , \beta }  \left| t^{\beta / \alpha } L^{\beta} exp(-tL^{\alpha }  ) 1 (x) \right|   \\
		& \lesssim \left| \int_{\mathbb{R}^n}  \int_{ 0 }^{\infty}  \int_{ 0  }^ { s }
		\int_{0}^{\infty} K_{ (t+u) ^{ 1/\alpha} \tau } ^ {L} (x,y) (t+u) ^{ 1/\alpha - 1} \tau
		\eta_{ 1 }^{\alpha} (\tau)  d \tau d u  \frac{t^{\beta / \alpha} ds}{s^{1+\beta / \alpha}} V(y)  d y \right|  .
	\end{align*}
	
	Since the function $ \eta _ { 1 } ^ { \alpha } ( \cdot )$ is continuous, the integral
	$\int _ { 0 } ^ { 1 }  \eta _ { 1 } ^ { \alpha } ( \tau )
	\tau ^ { \delta ^ { \prime } / 2 } d \tau < \infty $.
	On the other hand, recalling that
	$ \eta _ { 1 } ^ { \alpha } ( \tau ) \leq 1 / \tau ^ { 1 + \alpha } $,
	we obtain
	\[\int _ { 1 } ^ { \infty } \eta _ { 1 } ^ { \alpha } ( \tau ) \tau ^ { \delta '/ 2 } d \tau \lesssim \int _ { 1 } ^ { \infty } \frac { 1 } { \tau ^ { 1 + \alpha } }
	\tau ^ { \delta '/ 2 }  d \tau  < \infty .\]
	Then, if $ t ^ { 1 /( 2 \alpha) } \leq \rho (x) $,
	\begin{align*}
		&\left| \int _ { \mathbb { R } ^ { n } } \widetilde{D}_{\alpha, t} ^ {L, \beta }  ( x , y ) d y \right|
		\lesssim \Big| \int_{0}^{\infty}  \int_{ 0  }^ { s }  \int_{0}^{\infty} \frac{ t^{\beta / \alpha}   \eta_{ 1 }^{\alpha} (\tau) }{ t+u } \left( \frac { \sqrt { (t+u) ^{ 1/\alpha} \tau } } { \rho ( x ) } \right) ^ {\delta '}
		d \tau d u \frac{ds}{s^{1+\beta / \alpha}} \Big| \\
		&\lesssim I_{1} + I_{2} ,
	\end{align*}
	where
	\begin{equation*}
		\left\{\begin{aligned}
			& I _ { 1 }:= \rho(x) ^{ - \delta '  }  \int_{ 0 }^{t}  \int_{ 0 }^ { s } t^{\beta / \alpha}
			\frac{ (t+u) ^{ \delta' /( 2 \alpha) } }{ t+u }d u \frac{ds}{s^{1+\beta / \alpha}} ;   \\
			& I _ { 2 }:= \rho(x) ^{ - \delta '  }  \int_{ t }^{\infty}  \int_{ 0  }^ { s } t^{\beta / \alpha}
			\frac{ (t+u) ^{ \delta' /( 2 \alpha) } }{ t+u }d u \frac{ds}{s^{1+\beta / \alpha}} .
		\end{aligned}\right.
	\end{equation*}
	As in the estimates for $ I $ and $ II $, a straightforward calculation yields
	\begin{align*}
		\left| \int _ { \mathbb { R } ^ { n } } \widetilde{D}_{\alpha, t} ^ {L, \beta }  ( x , y ) d y \right|
		\lesssim  \left( t ^ { 1 /( 2 \alpha) }  / \rho ( x ) \right) ^ { \delta ^ { ' } }  .
	\end{align*}
	
	If $ t ^ { 1 /( 2 \alpha) } > \rho (x) $, since $N > \delta'$, it follows that
	\begin{align*}
		& \left| \int _ { \mathbb { R } ^ { n } } \widetilde{D}_{\alpha, t} ^ {L, \beta }  ( x , y ) d y \right|
		\lesssim \int_{ 0 }^{\infty} t^{\beta/\alpha} \int_{ 0  }^ { s } \rho(x) ^{N - \delta '  }  \frac{ \int_{0}^{\infty}  \tau ^{ \delta' / 2 - N / 2 }
			\eta_{ 1 }^{\alpha} (\tau)  d \tau }{(t+u) ^{ - \delta' /( 2 \alpha) +  N /( 2 \alpha) + 1 }} d u \frac{ds}{s^{1+\beta/\alpha}}  \\
		& \lesssim I_{3} + I_{4} ,
	\end{align*}
	where
	\begin{equation*}
		\left\{\begin{aligned}
			& I _ { 3 }:= \rho(x) ^{N - \delta '  }  \int_{ 0 }^{t}  \int_{ 0  }^ { s } t^{\beta/\alpha}
			\frac{ (t+u) ^{ \delta' /( 2 \alpha) } }{(t+u) ^{  N /( 2 \alpha) + 1 }}d u \frac{ds}{s^{1+\beta/\alpha}} ;   \\
			& I _ { 4 }:= \rho(x) ^{N - \delta '  }  \int_{ t }^{\infty}  \int_{ 0  }^ { s } t^{\beta/\alpha}
			\frac{ (t+u) ^{ \delta' /( 2 \alpha) } }{(t+u) ^{  N /( 2 \alpha) + 1 }}d u \frac{ds}{s^{1+\beta/\alpha}} .
		\end{aligned}\right.
	\end{equation*}
	Similarly to $I _ { 1 }$ and $I _ { 2 }$, a straightforward calculation yields
	\begin{align*}
		\left| \int _ { \mathbb { R } ^ { n } } \widetilde{D}_{\alpha, t} ^ {L, \beta }  ( x , y ) d y \right| \lesssim  \left(  t ^ { 1 / ( 2 \alpha) } / \rho ( x ) \right) ^ { \delta ^ { ' } - N } .
	\end{align*}
	Based on the estimates for $I_{i}, \; i=1,2,3,4$, we can derive (\ref{R bdd D/}) with $ \beta \in ( 0 , \alpha ) $.
	When $ \beta \geq \alpha $, let $ l \in [ 0 , 1 )$ such that
	$ l = \beta/\alpha - [ \beta/\alpha ] $. By (\ref{D2}), we have
	\begin{align*}
		\widetilde{D}_{\alpha, t} ^ {L, \beta }
		= t ^ { \beta / \alpha } \int_{ 0 }^{ \infty } \int_{ 0 }^{s} \int_{ 0 }^{ \infty }
		\eta_{ 1 } ^ { \alpha } ( \tau )   \partial_{ r }^ { 1+ [ \beta / \alpha ] }
		exp( - ( t + r )^{ 1 / \alpha } \tau L  ) d \tau  d r  \frac{ d s }{ s ^ { 1 + l } }  .
	\end{align*}
	By Proposition \ref{bdd D} and the higher-order derivative formula for composite functions, we obtain
	\begin{align*}
		& \left| \int _ { \mathbb { R } ^ { n } } \widetilde{D}_{\alpha, t} ^ {L, \beta }  ( x , y ) d y \right|  \\
		& \lesssim   \int _ { \mathbb { R } ^ { n } } t ^ { \beta / \alpha } \int_{ 0 }^{ \infty }
		\int_{ 0 }^{s}   \left|  \int_{ 0 }^{ \infty }  \eta_{ 1 } ^ { \alpha } ( \tau )
		\partial_{ r }^ { 1+ [ \beta / \alpha ] }  K _{ ( t + r )^{ 1 / \alpha }  \tau } ^ { L } ( x , y )
		d \tau  \right|   \frac{  d r  d s  d y  }{ s ^ { 1 + l } }    ,
	\end{align*}
	Similarly to $I _ { 1 }$ and $I _ { 2 }$, a straightforward calculation yields
	\begin{align*}
		\left| \int _ { \mathbb { R } ^ { n } } \widetilde{D}_{\alpha, t} ^ {L, \beta }  ( x , y ) d y \right|  \lesssim  \frac{  \left( t ^ { 1 / (2\alpha ) } / \rho ( x ) \right) ^ { \delta ^ { ' } } } { \left( 1 +  t ^ { 1 / (2\alpha ) } / \rho ( x ) \right) ^ { N } } .
	\end{align*}
\end{proof}

\begin{remark}
	For the case $ \alpha = 1 $, since the polynomial-type upper bound estimate
	$$ \frac { C _ { N } t ^ { \beta } } { ( t ^ { 1 / ( 2 \alpha ) } + | x - y | )
		^ { n + 2 \alpha \beta } } \left( 1 + \frac { t ^ { 1 / ( 2 \alpha ) } } { \rho ( x ) }
	+ \frac { t ^ { 1 / ( 2 \alpha ) } } { \rho ( y ) } \right) ^ { - N } $$ in Proposition \ref{bdd D} is weaker than the Lipschitz-type upper bound estimate $$ \frac { C _ { N } } { t ^ { n / 2 } } exp( - c | x - y | ^ { 2 } / t ) \left( 1 + \frac { \sqrt { t } } { \rho ( x ) } + \frac { \sqrt { t } } { \rho ( y ) } \right) ^ { - N } $$ in Proposition \ref{Q bdd}, we similarly obtain the above result.
\end{remark}

\section{Characterizations of Campanato-type spaces associated with $L$ } \label{sec 4}

\subsection{Campanato type spaces associated with $L$ } \label{sec 4.1}
In this section, we derive the Carleson type characterization of Campanato type spaces related with $L$, which will be defined as follows.
\begin{definition}\label{def BMO}
	The space $ \Lambda _ { L , \gamma } \left( \mathbb { R } ^ { n } \right) , 0 \leq \gamma \leq 1$, is defined as the set of all locally integrable functions $f $ in $ \mathbb { R } ^ { n }$ satisfying that there exists a constant $C $ such that
	\begin{equation}\label{equ def BMO}
		\frac { 1 } { | B |  } \int _ { B } | f ( x ) - f ( B , V ) | d x \leq C | B | ^ {  \gamma / n } ,
	\end{equation}
	where the supremum is taken over all balls $B$ centered at $x _ { B }$ with radius $r _ { B } $, and
	\begin{equation*}
		f ( B , V ) : =
		\left\{ \begin{aligned}
			&f _ { B } ,   &r _ { B } < \rho \left( x _ { B } \right) ;\\
			&0,    &r _ { B } \geq \rho \left( x _ { B } \right).
		\end{aligned} \right.
	\end{equation*}
	The norm $\| f \| _ {  \Lambda _ { L , \gamma }  }$ is defined as the infimum of the constants $C $ such that (\ref{equ def BMO}), above, holds.
\end{definition}

\begin{remark}
	Let $ \Lambda_{L} $ denote the space $ \Lambda_{L,0} $, and let $ \Lambda_{\gamma} $ represent the traditional $ BMO ^{\gamma} $ space. By the classical John-Nirenberg inequality, we can establish that replacing the $L^{1}$-norm in (\ref{equ def BMO}) with $L^p$-norm for $1 < p < \infty$ yields equivalent characterizations of $ \Lambda _ { L , \gamma }$ with comparable norms. Specifically, the modified condition become:
	\begin{equation}\label{equ def BMO p}
		\left( \frac { 1 } { | B |  } \int _ { B } | f ( x ) - f ( B , V ) | ^ {p} d x \right) ^ {1/p} \leq C | B | ^ {  \gamma / n } .
	\end{equation}
\end{remark}

It is a classical result that the Hardy spaces $H ^ { p } \left( \mathbb { R } ^ { n } \right) $, for $0 < p \leq 1 $, arise as the predual spaces of the Campanato spaces (cf. \cite{fef 2}). In the subsequent development of harmonic analysis and operator theory, particularly in the early 21st century, this fundamental duality was rigorously generalized to function spaces associated with operators; see \cite{bong, duong 3, dziub 1, dziub 4, dziub 5, hofm, liu, yang 0, yang 1, yang 3, yang 4}.
For a degenerate Schr\"odinger operator $L $, Dziuba\'nski \cite{dziub 3} employed a perturbation argument to estimate the kernel of the semigroup $\{ exp(-tL) \}_{t > 0}$, which reduces to uniformly elliptic operators by taking $\omega = 1$. As an application, the author obtained the atomic characterization of the Hardy space associated with $L$ denoted by $H^1_L(\mathbb{R}^n)$.
Building upon this result, Huang, Li, and Liu \cite{huang 0} further characterized the Hardy space $H^1_L(\mathbb{R}^n)$ using square functions generated by $\{ exp(-tL) \}_{t > 0}$.
In a related development, Harboure, Salinas, and Viviani \cite{harb 2} studied the boundedness of operators linked to $L$ on various functional spaces, including weighted Lebesgue spaces, weighted Morrey spaces, and $BMO$-type spaces.
For further information, we refer readers to \cite{bui, hofm1, huang 1, huang 2, kur 0, kur 2}.

\begin{definition}\label{def HL}
	Let $0 < p \leq 1$. The Hardy space $H_L^p(\mathbb{R}^n)$ associated with the operator $L$ consists of all measurable functions $f$ satisfying:
	\begin{equation*}
		\|f\|_{H_L^p} := \left\| \sup_{s>0} |T_s^L f| \right\|_{L^p} < \infty,
	\end{equation*}
	where $\{T_s^L\}_{s>0}$ denotes the $L$-associated semigroup operators. Here, the maximal operator $T^*(f)(x) := \sup_{s>0} |T_s^L f(x)|$ is required to belong to $L^p(\mathbb{R}^n)$.
\end{definition}

Let $\delta _ { 0 } = \min \{ 1 , 2 - n / q \}$ and $ n / \left( n + \delta _ { 0 } \right) < p \leq 1 $. An atom of $H _ { L } ^ { p } \left( \mathbb { R } ^ { n } \right)$ associated with a ball $B \left( x _ { B } , r _ { B } \right)$ is a function $ a $ such that
\begin{equation*}\label{atom}
	\left\{ \begin{aligned}
		&supp \; a \subseteq B \left( x _ { B } , r _ { B } \right), &r_{B}\leq \rho(x_{B}) ;\\
		&\|a\|_{L^{\infty }} \leq |B \left( x _ { B } , r _ { B } \right)|^{-1/p}  ;\\
		&\int_{ \mathbb{ R } ^ { n } } a(x) dx =0,  &r_{B}\leq \rho(x_{B})/4   .
	\end{aligned} \right.
\end{equation*}
In \cite{yang 3}, Yang-Yang-Zhou obtained the following atomic characterization of $ H _ { L } ^ { p } \left( \mathbb { R } ^ { n } \right) $:

\begin{proposition}{ (\cite [Definition 2.6] {yang 3})}
	Let $n / \left( n + \delta _ { 0 } \right) < p \leq 1 $. $f \in H _ { L } ^ { p } \left( \mathbb { R } ^ { n } \right)$ if and only if $f = \sum _ { j } \lambda _ { j } a _ { j  } $, where
	$\{ a _ { j } \}$ are $H _ { L } ^ { p }$ -atoms and $\sum _ { j } \left| \lambda _ { j } \right| ^ { p } < \infty $.
\end{proposition}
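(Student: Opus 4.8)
The plan is to prove the two implications separately, the forward direction being the substantial one. Throughout, write $T^{\ast}f(x)=\sup_{s>0}|T_{s}^{L}f(x)|$, and recall that $T^{\ast}$ is bounded on $L^{2}(\mathbb{R}^{n})$ (indeed on $L^{q}$, $1<q\le\infty$): since $K_{s}^{L}$ is symmetric, positive, and satisfies $0\le K_{s}^{L}\le h_{s}$ with $\int h_{s}(x,y)\,dy=1$, the semigroup $\{T_{s}^{L}\}_{s>0}$ is a symmetric sub-Markovian semigroup, so Stein's maximal theorem applies.

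For the ``$\Leftarrow$'' direction, by the $p$-subadditivity $\|\sum_{j}\lambda_{j}a_{j}\|_{H_{L}^{p}}^{p}\le\sum_{j}|\lambda_{j}|^{p}\|a_{j}\|_{H_{L}^{p}}^{p}$ and completeness of $H_{L}^{p}(\mathbb{R}^{n})$, it suffices to prove the uniform bound $\|a\|_{H_{L}^{p}}\lesssim1$ for every $H_{L}^{p}$-atom $a$ associated with a ball $B=B(x_{B},r_{B})$. Split $\mathbb{R}^{n}=2B\cup(2B)^{c}$. On $2B$, Hölder's inequality and the $L^{2}$-boundedness of $T^{\ast}$ give
\[
\int_{2B}|T^{\ast}a|^{p}\,dx\le|2B|^{1-p/2}\|T^{\ast}a\|_{L^{2}}^{p}\lesssim|B|^{1-p/2}\|a\|_{L^{2}}^{p}\lesssim|B|^{1-p/2}\bigl(|B|^{1/2-1/p}\bigr)^{p}=1.
\]
On $(2B)^{c}$ one distinguishes the two kinds of atoms. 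If $r_{B}\ge\rho(x_{B})$, then for $x\notin2B$, $y\in B$ one has $|x-y|\sim|x-x_{B}|\ge2\rho(x_{B})$, so Proposition \ref{K bdd}, the bound $\int_{B}|a|\le|B|^{1-1/p}$, optimization in $s$, and Lemma \ref{special mV} (to compare $\rho(x),\rho(y)$ with $\rho(x_{B})$) yield $T^{\ast}a(x)\lesssim|B|^{1-1/p}\bigl(\rho(x_{B})/|x-x_{B}|\bigr)^{M}|x-x_{B}|^{-n}$ for $M$ as large as desired; choosing $(n+M)p>n$ and integrating gives $\int_{(2B)^{c}}|T^{\ast}a|^{p}\lesssim(\rho(x_{B})/r_{B})^{Mp}\le1$. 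If $r_{B}<\rho(x_{B})$, one may assume $\int a=0$ (the band $\rho(x_{B})/4<r_{B}<\rho(x_{B})$ being treated as in the previous case); then $T_{s}^{L}a(x)=\int_{B}\bigl(K_{s}^{L}(x,y)-K_{s}^{L}(x,x_{B})\bigr)a(y)\,dy$, and since $|x-x_{B}|>2r_{B}$ the smoothness estimate of Proposition \ref{K smo} applies for every $s>0$, giving $T^{\ast}a(x)\lesssim|B|^{1-1/p}r_{B}^{\delta}|x-x_{B}|^{-n-\delta}$ for any $0<\delta<\delta_{0}$; as $p>n/(n+\delta_{0})$ one may take $(n+\delta)p>n$, and integration again yields a bound $\lesssim1$. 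Hence $\|a\|_{H_{L}^{p}}\lesssim1$.

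For the ``$\Rightarrow$'' direction I would first use the equivalence (valid under the Gaussian bounds) of the maximal-function norm in Definition \ref{def HL} with the Lusin area norm $\|S_{L}f\|_{L^{p}}$, where $S_{L}f(x)^{2}=\int_{0}^{\infty}\int_{|x-y|<\sqrt{s}}|s\partial_{s}T_{s}^{L}f(y)|^{2}\,\frac{dy\,ds}{s^{1+n/2}}$. Then $F(y,s):=s\partial_{s}T_{s}^{L}f(y)$ lies in the tent space $T^{p}(\mathbb{R}^{n+1}_{+})$, which decomposes atomically as $F=\sum_{j}\lambda_{j}A_{j}$ with each $A_{j}$ supported in a Carleson box over a ball $B_{j}$ and $\sum_{j}|\lambda_{j}|^{p}\lesssim\|F\|_{T^{p}}^{p}\sim\|f\|_{H_{L}^{p}}^{p}$. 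Feeding this decomposition into a Calder\'on reproducing formula $f=c\int_{0}^{\infty}(s^{2}L)e^{-s^{2}L}F(\cdot,s)\,\frac{ds}{s}$ converts each tent atom into a fixed multiple of an $H_{L}^{p}$-atom; here the decay factor $(1+\sqrt{s}/\rho)^{-N}$ in Propositions \ref{K bdd}--\ref{K smo} shows that over a box with $r_{B_{j}}\gtrsim\rho(x_{B_{j}})$ one obtains an atom of the no-cancellation type (the $\rho$-decay replacing the vanishing moment in controlling the tails), whereas over a box with $r_{B_{j}}\le\rho(x_{B_{j}})/4$ the operator $(s^{2}L)e^{-s^{2}L}$ produces the required vanishing moment. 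Summing in $\ell^{p}$ gives the atomic decomposition with $\sum_{j}|\lambda_{j}|^{p}\lesssim\|f\|_{H_{L}^{p}}^{p}$.

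I expect the main obstacle to be the ``$\Rightarrow$'' direction, and within it the verification that the tent-space atomic decomposition, passed through the (non-exact) reproducing formula, yields \emph{precisely} the two-regime atom structure of the statement --- that boxes large relative to $\rho$ genuinely require no cancellation while small boxes retain it --- together with the control of the error terms and their $\ell^{p}$-summation for $p<1$; the equivalence $\|f\|_{H_{L}^{p}}\sim\|S_{L}f\|_{L^{p}}$ is itself a nontrivial input requiring the Gaussian bounds. By comparison, the ``$\Leftarrow$'' direction is routine once Propositions \ref{K bdd} and \ref{K smo} are available.
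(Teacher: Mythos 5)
This proposition is not proved in the paper: it is quoted as a cited result (from Yang--Yang--Zhou, where $H_L^p$ is in fact \emph{defined} atomically on RD-spaces with an admissible function $\rho$, and then shown equivalent to the maximal-function definition used here). So there is no in-paper argument to compare against; I assess your sketch on its own terms.

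Your ``$\Leftarrow$'' direction is essentially sound, but note two things. First, the case split is misstated: by the atom definition in the paper one always has $r_B \le \rho(x_B)$, with the vanishing moment imposed only when $r_B \le \rho(x_B)/4$. Thus your ``$r_B \ge \rho(x_B)$'' case never arises; the non-cancellation regime is $\rho(x_B)/4 < r_B \le \rho(x_B)$, i.e.\ $r_B \sim \rho(x_B)$, after which your optimization in $s$ using the $(1+\sqrt{s}/\rho)^{-N}$ decay works as written. Second, Proposition~\ref{K smo} requires $|h| < \min\{\sqrt{t},|x-y|/2\}$, so it does \emph{not} apply uniformly over $s>0$: for $s < r_B^2$ the cancellation estimate is unavailable. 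This is harmless — in that regime the Gaussian size bound alone already gives $s^{-n/2}\exp(-c|x-x_B|^2/s)\lesssim r_B^{-n}\exp(-c|x-x_B|^2/r_B^2)$, which integrates to $O(1)$ — but it should be said, since as currently phrased the claim that the smoothness estimate ``applies for every $s>0$'' is false.

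Your ``$\Rightarrow$'' direction is a roadmap rather than a proof, and it has a genuine gap beyond the nontrivial inputs you already flag (the maximal-to-area norm equivalence, tent-space atomic decomposition, and Calder\'on reproducing formula, each a theorem in its own right). The tent-space decomposition produces Carleson boxes of \emph{all} radii, while the target atoms must be supported on balls with $r_B \le \rho(x_B)$ in every case. You gesture at the $(1+\sqrt{s}/\rho)^{-N}$ decay ``replacing the vanishing moment,'' but what is actually needed is a mechanism to re-express the contribution from a box with $r\gg\rho$ as a sum (with $\ell^p$ control, $p<1$) of genuine $H_L^p$-atoms at scale $\lesssim\rho$; this is the heart of the matter and is not addressed. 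The cited source instead works with grand maximal functions and a Calder\'on--Zygmund decomposition localized to balls of radius $\le\rho(x)$ on RD-spaces — a materially different and, for this operator class, cleaner route, since it builds the $\rho$-localization into the decomposition from the outset rather than recovering it after the fact from tent-space boxes.
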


\begin{theorem}{ (\cite [Theorem 2.1] {yang 3})}
	Let $0 \leq \gamma < 1$. Then the dual space of $H _ { L } ^ { n / ( n + \gamma ) } \left( \mathbb { R } ^ { n } \right)$ is $\Lambda _ { L ,\gamma } \left( \mathbb { R } ^ { n } \right) \left( \mathbb { R } ^ { n } \right) $. More precisely, for some function $f \in \Lambda _ { L ,\gamma } \left( \mathbb { R } ^ { n } \right)$ and all $H _ { L } ^ { n / ( n + \gamma ) }$-atoms a, continuous linear functional $\Phi$ over $H _ { L } ^ { n / ( n + \gamma ) } \left( \mathbb { R } ^ { n } \right)$ can be represented as
	\[\Phi ( a ) = \int _ { \mathbb{ R } ^ { n } } f ( x ) a ( x ) d x . \]
	Moreover, the operator norm $\| \Phi \| _ { o p } \sim \| f \| _ { \Lambda _ { L ,\gamma } } $.
\end{theorem}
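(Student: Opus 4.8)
This is a Fefferman--Stein--type duality, and the plan is to establish the two inclusions separately, following the classical scheme adapted to $L$. Write $p:=n/(n+\gamma)$, so that $1/p=1+\gamma/n$ and $p\in(n/(n+1),1]$.

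\emph{Step 1 (each $f\in\Lambda_{L,\gamma}$ gives a bounded $\Phi$ with $\|\Phi\|_{op}\lesssim\|f\|_{\Lambda_{L,\gamma}}$).} I would first prove the pairing bound $|\int_{\mathbb R^n}f a\,dx|\lesssim\|f\|_{\Lambda_{L,\gamma}}$ for every $H_L^p$-atom $a$ on a ball $B=B(x_B,r_B)$ with $r_B\le\rho(x_B)$. If $r_B\le\rho(x_B)/4$, the vanishing moment gives $\int fa=\int_B(f-f_B)a$, hence $|\int fa|\le\|a\|_\infty\int_B|f-f_B|\le|B|^{-1/p}\,\|f\|_{\Lambda_{L,\gamma}}|B|^{1+\gamma/n}=\|f\|_{\Lambda_{L,\gamma}}$ by the choice of $p$. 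If $\rho(x_B)/4<r_B\le\rho(x_B)$ there is no cancellation, so I would instead compare $B$ with $\widetilde B:=B(x_B,2\rho(x_B))$: then $r_{\widetilde B}>\rho(x_{\widetilde B})$, so $f(\widetilde B,V)=0$, and $|\widetilde B|\sim|B|$ because $\rho$ is slowly varying (Lemma \ref{special mV}(2)); thus $|\int fa|=|\int_{\widetilde B}(f-0)a|\le\|a\|_\infty\int_{\widetilde B}|f|\lesssim|B|^{-1/p}\|f\|_{\Lambda_{L,\gamma}}|\widetilde B|^{1+\gamma/n}\lesssim\|f\|_{\Lambda_{L,\gamma}}$. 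Given $g\in H_L^p(\mathbb R^n)$, I would take the atomic decomposition $g=\sum_j\lambda_ja_j$ recalled above (see \cite{yang 3}) with $\sum_j|\lambda_j|^p\lesssim\|g\|_{H_L^p}^p$, set $\Phi(g):=\int_{\mathbb R^n}fg$ (first on finite atomic combinations, a dense subclass), and sum using $p\le1$ so that $\sum_j|\lambda_j|\le(\sum_j|\lambda_j|^p)^{1/p}$; this gives $|\Phi(g)|\lesssim\|f\|_{\Lambda_{L,\gamma}}\|g\|_{H_L^p}$ and the stated representation on atoms.

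\emph{Step 2 (each bounded $\Phi$ is represented by some $f\in\Lambda_{L,\gamma}$).} Here I would use the fact — a consequence of the Gaussian upper bound for $K_t^L(\cdot,\cdot)$ in Proposition \ref{K bdd} together with the maximal-function argument of \cite{yang 3} — that if $g\in L^2(\mathbb R^n)$ is supported in $B=B(x_B,r_B)$ with $r_B\le\rho(x_B)/4$ and $\int_Bg=0$ (respectively with $\rho(x_B)/4<r_B\le\rho(x_B)$ and no moment condition), then $\|g\|_{H_L^p}\lesssim|B|^{1/p-1/2}\|g\|_{L^2(B)}$. Hence $\Phi$ restricts to a bounded functional on the corresponding closed subspace of $L^2(B)$, and the Riesz representation theorem yields $f^B\in L^2(B)$ with $\Phi(g)=\int_Bf^Bg$ for such $g$ and $\|f^B\|_{L^2(B)}\lesssim\|\Phi\|_{op}|B|^{1/p-1/2}$ (for small balls chosen with $\int_Bf^B=0$). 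On overlapping balls these local representatives coincide up to additive constants, so an exhaustion/patching argument — fixing a base ball $B(x_0,\rho(x_0))$ and propagating the normalisation along chains of medium balls, whose overlaps are controlled since $\rho$ is slowly varying (Lemma \ref{special mV}(2)) — produces a single $f\in L^1_{loc}(\mathbb R^n)$ representing $\Phi$ on a subclass dense in $H_L^p(\mathbb R^n)$.

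\emph{Step 3 ($f\in\Lambda_{L,\gamma}$, with $\|f\|_{\Lambda_{L,\gamma}}\lesssim\|\Phi\|_{op}$) and main obstacle.} For $B$ with $r_B\le\rho(x_B)/4$, I would use $L^1(B)$--$L^\infty(B)$ duality: $\int_B|f-f_B|=\sup_{\|h\|_{L^\infty(B)}\le1}\int_Bf(h-h_B)$, note that $g:=(h-h_B)\mathbf{1}_B$ has mean zero, is supported in $B$, satisfies $\|g\|_{L^2(B)}\le2|B|^{1/2}$, and that $\int_Bfg=\Phi(g)$ by Step 2; hence $\int_B|f-f_B|\le\|\Phi\|_{op}\sup_h\|g\|_{H_L^p}\lesssim\|\Phi\|_{op}|B|^{1/p-1/2}|B|^{1/2}=\|\Phi\|_{op}|B|^{1+\gamma/n}$. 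For $B$ with $r_B\ge\rho(x_B)$ one has $f(B,V)=0$, and I would cover $B$ by a bounded-overlap family of medium balls $B_j=B(y_j,\rho(y_j))$, $y_j\in B$, with $\sum_j|B_j|\lesssim|B|$ (a Whitney-type covering adapted to $\rho$, again via Lemma \ref{special mV}(2)) and a subordinate partition of unity $\{\psi_j\}$; then for $\|h\|_{L^\infty}\le1$ each $h\psi_j$ is a bounded multiple of a (non-cancellation) $H_L^p$-atom on $B_j$, so $\|h\mathbf{1}_B\|_{H_L^p}^p\lesssim\sum_j|B_j|\lesssim|B|$ and $\int_Bfh=\Phi(h\mathbf{1}_B)\le\|\Phi\|_{op}|B|^{1/p}$, giving $\int_B|f|\lesssim\|\Phi\|_{op}|B|^{1+\gamma/n}$. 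Combining the three steps yields the duality and $\|\Phi\|_{op}\sim\|f\|_{\Lambda_{L,\gamma}}$. I expect the main obstacle to be Step 2: making the patching of the local Riesz representatives canonical (consistent bookkeeping of the additive constants along chains of balls) and checking that the resulting $f$ represents $\Phi$ on a subclass dense in the quasi-Banach space $H_L^p(\mathbb R^n)$ when $p<1$; this, together with the passage from $L^\infty$-atoms to $L^2$-based atoms (where the Gaussian bound of Proposition \ref{K bdd} is used) and the $\rho$-adapted Whitney covering in the large-ball case of Step 3, is the technical core, the remaining estimates being routine.
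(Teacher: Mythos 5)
The paper does not prove this theorem at all: it is imported verbatim from \cite[Theorem 2.1]{yang 3}, and the atomic decomposition of $H_L^{n/(n+\gamma)}(\mathbb{R}^n)$ that your argument depends on is likewise only cited (from the same source) in the preceding proposition. So there is no in-paper proof against which to compare your attempt.

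Taken on its own terms, your outline is a faithful adaptation of the Fefferman--Stein scheme to the Schr\"odinger setting with critical-radius function $\rho$, and the individual steps are sound. In Step~1 the small-ball case uses cancellation in the usual way; the medium-ball case correctly compensates for the lack of cancellation by passing to $\widetilde B=B(x_B,2\rho(x_B))$, where $f(\widetilde B,V)=0$ and $|\widetilde B|\sim|B|$; and the $\ell^p\hookrightarrow\ell^1$ inequality for $p\le1$ sums the atomic pieces. In Step~3 the $L^1$--$L^\infty$ duality for small balls and the $\rho$-adapted Whitney covering with a subordinate partition of unity for large balls are the standard devices. Two places where the sketch genuinely leans on unproved inputs: the $L^2$-atom-to-$H_L^p$ embedding $\|g\|_{H_L^p}\lesssim|B|^{1/p-1/2}\|g\|_{L^2(B)}$ (which you attribute, reasonably, to the Gaussian bound of Proposition~\ref{K bdd} plus a maximal-function argument), and the patching of the local Riesz representatives into a single $f\in L^1_{\mathrm{loc}}$ that represents $\Phi$ on a dense subclass of the quasi-Banach space $H_L^p$ for $p<1$. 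You are right to flag the latter as the technically delicate core; for $p<1$ one cannot appeal to Hahn--Banach or $L^2$-density arguments directly, and the additive constants must be propagated consistently along a chain of $\rho$-sized balls (which is where Lemma~\ref{special mV}(2) is essential). Also note that the range in the theorem statement, $0\le\gamma<1$, is wider than what the atomic decomposition actually supports ($0\le\gamma<\delta_0$ with $\delta_0=\min\{1,2-n/q\}$); your proof inherits that restriction silently.
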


We next define the Campanato–Sobolev type space associated with the operator $ L $.

\begin{definition}
	Let $0 \leq \gamma < 1$ and $\kappa \geq 0 $. The Campanato-Sobolev type space $\Lambda
	^ { \kappa  } _ { L , \gamma } \left( \mathbb { R } ^ { n } \right)$ is the collection of all measurable functions $f$ such that $L ^ { \kappa } f$ belongs to the space $\Lambda _ { L ,\gamma }
	\left( \mathbb { R } ^ { n } \right) $, and equips $f$ with the norm
	\begin{equation*}
		\displaystyle \left\| f \right\| _ { \Lambda ^ {  \kappa } _ { L , \gamma } }
		: = \displaystyle  \left\| L ^ {  \kappa } f \right\| _ { \Lambda _ { L ,\gamma } }
		<\infty.
	\end{equation*}
\end{definition}

To establish the Carleson characterization of $\Lambda_{L,\gamma}(\mathbb R^{n})$ and $\Lambda^{\kappa}_{L,\gamma}(\mathbb R^{n})$, we need the following reproducing formula related with $L$.

\begin{lemma} \label{isometry}
	For $\alpha \in ( 0 , 1 ) $ and $ \beta > 0 $, the operator $ \widetilde{D}_{\alpha, t} ^ {L, \beta } = t^{\beta/\alpha} L^{\beta} exp(-tL^{\alpha}) $
	defines an isometry from $L ^ { 2 } \left( \mathbb { R } ^ { n } \right)$ into
	$L ^ { 2 } \left( \mathbb { R } _ { + } ^ { n + 1 } , d x d t / t \right) $. Moreover, in the sense of $L ^ { 2 } \left( \mathbb { R } ^ { n } \right) $, the following holds
	\[f ( x ) = c _ { \alpha , \beta } \lim _ { N \rightarrow \infty }
	\lim _ { \epsilon \rightarrow 0  }   \int _ { \epsilon } ^ { N }
	\left( \widetilde{D}_{\alpha, t} ^ {L, \beta } \right) ^ { 2 }
	( f ) ( x ) \frac { d t } { t } .\]
\end{lemma}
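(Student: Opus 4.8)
The plan is to reduce the entire statement to the spectral theorem for $L$. Since $A(x)$ is real symmetric and uniformly elliptic and $V\ge 0$ is locally integrable, the form $\mathfrak{a}(u,v)=\int A\nabla u\cdot\overline{\nabla v}+\int V u\overline v$ is symmetric, closed and nonnegative, so $L=-\mathrm{div}(A(x)\cdot\nabla)+V$ is a nonnegative self-adjoint operator on $L^2(\mathbb{R}^n)$ with a spectral resolution $L=\int_0^\infty\lambda\,dE_\lambda$. Under this resolution the operator $\widetilde{D}_{\alpha,t}^{L,\beta}=t^{\beta/\alpha}L^{\beta}\exp(-tL^{\alpha})$ is the functional-calculus operator $\psi_t(L)$ with bounded Borel symbol $\psi_t(\lambda)=t^{\beta/\alpha}\lambda^{\beta}e^{-t\lambda^{\alpha}}$ on $(0,\infty)$; this is consistent with the definition (\ref{def frac L}) of $L^{s}$ for $s\in(0,\alpha)$ and, for $\beta\ge\alpha$, with the factorization (\ref{D2}). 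Consequently, for $f\in L^2(\mathbb{R}^n)$ one has the spectral representation $\|\widetilde{D}_{\alpha,t}^{L,\beta}f\|_{L^2}^2=\int_0^\infty|\psi_t(\lambda)|^2\,d\|E_\lambda f\|^2$.

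The computational heart of the proof is the scale invariance of the Littlewood--Paley weight: for every fixed $\lambda>0$ the substitution $s=2t\lambda^{\alpha}$ yields
\[
\int_0^\infty|\psi_t(\lambda)|^2\,\frac{dt}{t}=\lambda^{2\beta}\int_0^\infty t^{2\beta/\alpha}e^{-2t\lambda^{\alpha}}\,\frac{dt}{t}=\frac{\Gamma(2\beta/\alpha)}{2^{2\beta/\alpha}}=:\frac{1}{c_{\alpha,\beta}},
\]
a finite constant independent of $\lambda$. To obtain the isometry I would interchange $\int_0^\infty(\cdot)\,dt/t$ with the spectral integral by Tonelli's theorem (the integrand is nonnegative), getting
\[
\int_0^\infty\!\!\int_{\mathbb{R}^n}\bigl|\widetilde{D}_{\alpha,t}^{L,\beta}f(x)\bigr|^2\,dx\,\frac{dt}{t}=\int_0^\infty\Bigl(\int_0^\infty|\psi_t(\lambda)|^2\,\frac{dt}{t}\Bigr)\,d\|E_\lambda f\|^2=\frac{1}{c_{\alpha,\beta}}\,\|f\|_{L^2}^2,
\]
so $\sqrt{c_{\alpha,\beta}}\,\widetilde{D}_{\alpha,t}^{L,\beta}$ is an isometry of $L^2(\mathbb{R}^n)$ into $L^2(\mathbb{R}^{n+1}_+,dx\,dt/t)$; here one uses that $\{0\}$ carries no spectral mass, i.e. $E_{\{0\}}=0$.

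For the reproducing formula, put $\Phi_{\epsilon,N}(\lambda):=\int_\epsilon^N|\psi_t(\lambda)|^2\,dt/t$, so that by functional calculus $\int_\epsilon^N(\widetilde{D}_{\alpha,t}^{L,\beta})^2 f\,dt/t=\Phi_{\epsilon,N}(L)f$. The functions $\Phi_{\epsilon,N}$ are nonnegative, increase to the constant $1/c_{\alpha,\beta}$ pointwise on $(0,\infty)$ as $\epsilon\to0$ and $N\to\infty$, and satisfy $0\le c_{\alpha,\beta}\Phi_{\epsilon,N}\le 1$. Hence
\[
\Bigl\|\,c_{\alpha,\beta}\int_\epsilon^N(\widetilde{D}_{\alpha,t}^{L,\beta})^2 f\,\frac{dt}{t}-f\,\Bigr\|_{L^2}^2=\int_0^\infty\bigl|c_{\alpha,\beta}\Phi_{\epsilon,N}(\lambda)-1\bigr|^2\,d\|E_\lambda f\|^2,
\]
and the integrand is bounded by $1$ and tends to $0$ for every $\lambda>0$, while $\{0\}$ is $d\|E_\lambda f\|^2$-null; the dominated convergence theorem on the finite measure $d\|E_\lambda f\|^2$ (total mass $\|f\|_{L^2}^2$) then gives the asserted limit in $L^2(\mathbb{R}^n)$. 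The only genuinely delicate point is the bottom of the spectrum: one must verify $E_{\{0\}}=0$, which follows from $V\not\equiv0$, since $Lf=0$ forces $\int A\nabla f\cdot\overline{\nabla f}+\int V|f|^2=0$, hence $f$ constant and $Vf=0$, hence $f=0$ in $L^2(\mathbb{R}^n)$ with $n\ge 3$. Everything else is a routine consequence of the spectral theorem and the Gamma-integral identity above.
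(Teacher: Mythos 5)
Your proof is correct and takes essentially the same spectral-calculus route as the paper: both express $\widetilde{D}_{\alpha,t}^{L,\beta}$ via the spectral resolution of the self-adjoint operator $L$, observe that $\int_0^\infty|\psi_t(\lambda)|^2\,dt/t$ is a $\lambda$-independent Gamma constant, and deduce the isometry by Fubini/Tonelli on the spectral measure. The only difference is in the reproducing formula: the paper shows the partial integrals $\int_{\epsilon}^{N}(\widetilde{D}_{\alpha,t}^{L,\beta})^2 f\,dt/t$ form a Cauchy sequence in $L^2$ (by estimating $\int_{n_k}^{n_{k+m}}$ and $\int_{\epsilon_{k+m}}^{\epsilon_k}$) and then identifies the limit $h$ by pairing against an arbitrary $g\in L^2$, whereas you show directly, via dominated convergence on the finite spectral measure $d\|E_\lambda f\|^2$, that $c_{\alpha,\beta}\Phi_{\epsilon,N}(L)f\to f$ in $L^2$. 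Your version is a mild streamlining that also makes explicit the one point the paper leaves tacit, namely that $E_{\{0\}}=0$ (which, as you note, is forced since a constant $L^2$ function on $\mathbb{R}^n$, $n\ge 3$, vanishes — the $V\not\equiv 0$ hypothesis is not even needed for this step).
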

\begin{proof}
	Given the spectral resolution \( dE(\lambda) \) of the operator \( L \), we have
	\[ \widetilde{D}_{\alpha, t}^{L, \beta}
	= \int_{0}^{\infty} \left( t\lambda^\alpha \right)^{\beta/\alpha} exp(-t\lambda^\alpha) \, dE(\lambda). \]
	Then for $f \in L ^ { 2 } \left( \mathbb { R } ^ { n } \right) ,$ we can obtain
	\begin{align*}
		\left\| \widetilde{D}_{\alpha, t} ^ {L, \beta } ( f ) ( x ) \right\|
		^ { 2 }  _ { L ^ { 2 } \left( \mathbb { R } _ { + } ^ { n + 1 } , d x d t / t \right) }
		& = \int _ { 0 } ^ { \infty } \left\langle \widetilde{D}_{\alpha, t} ^ {L, \beta } ( f ) , \widetilde{D}_{\alpha, t} ^ {L, \beta } ( f ) \right\rangle \frac { d t } { t }   \\  
		& = \int _ { 0 } ^ { \infty }  \int _ { 0 } ^ { \infty }  t ^ { 2 \beta / \alpha }  \lambda ^ { 2 \beta } exp( - 2 t \lambda ^ { \alpha } ) \frac { d t } { t } d E _ { f , f } ( \lambda )  \\
		& = C_{\alpha , \beta }  \| f \| _ { 2 } ^ { 2 } .
	\end{align*}
	Next, we only prove that for every pair of sequences $n _ { k } \uparrow \infty $ and
	$\epsilon _ { k }  \downarrow  0$ as $k \rightarrow \infty ,$
	\begin{equation}  \label{equL2}
		\begin{aligned}
			\lim _ { k \rightarrow \infty } \int _ { n _ { k } } ^ { n _ { k + m } }
			\left( \widetilde{D}_{\alpha, t} ^ {L, \beta }  \right) ^ { 2 }
			f ( x ) \frac { d t } { t } = \lim _ { k \rightarrow \infty }
			\int _ { \epsilon _ { k } } ^ { \epsilon _ { k + m } }
			\left( \widetilde{D}_{\alpha, t} ^ {L, \beta } \right) ^ { 2 }
			f ( x ) \frac { d t } { t } = 0 .
		\end{aligned}
	\end{equation}
	If (\ref{equL2}) holds, there exists a function $h \in L ^ { 2 } \left( \mathbb { R } ^ { n } \right) $ such that
	\[\lim _ { k \rightarrow \infty } \int _ { \epsilon _ { k } } ^ { n  _ { k } }
	\left(  \widetilde{D}_{\alpha, t} ^ {L, \beta } \right) ^ { 2 }
	f ( x ) \frac { d t } { t } = h ( x ) ,\]
	which implies that $ \forall g \in L ^ { 2 } \left( \mathbb { R } ^ { n } \right) ,$
	\begin{align*}
		\langle h , g \rangle   
		= \lim _ { k \rightarrow \infty } \int _ { \epsilon _ { k } } ^ { n _ { k } } \left\langle
		\widetilde{D}_{\alpha, t} ^ {L, \beta }  f , \widetilde{D}_{\alpha, t} ^ {L, \beta }  g
		\right\rangle \frac { d t } { t }
		= C _ { \alpha , \beta } \langle f , g \rangle .
	\end{align*}
	
	This means $h = C _ { \alpha , \beta } f $. Then we verify (\ref{equL2}).
	As $k \rightarrow \infty $, we can deduce that
	\begin{align*}
		& \left\| \int _ { n _ { k } } ^ { n _ { k + m } }
		\left( \widetilde{D}_{\alpha, t} ^ {L, \beta }  \right) ^ { 2 }
		f ( x ) \frac { d t } { t }  \right\| ^ { 2 }  _ { L ^ { 2 }  }    \\
		&= \left\| \int _ { 0 } ^ { \infty }     \left( \int _ { n _ { k } } ^ { n _ { k + m } } t ^ { 2 \beta / \alpha }  \lambda ^ { 2 \beta } exp( - 2 t \lambda ^ { \alpha } ) \frac { d t } { t } \right) d E _ { f } ( \lambda ) \right\| ^ { 2 }  _ { L ^ { 2 }  }    \\
		&\leq \int _ { 0 } ^ { \infty } \left( \int _ { n _ { k } } ^ { n _ { k + m } }
		t ^ { 2 \beta / \alpha }  \lambda ^ { 2 \beta } exp( - 2 t \lambda ^ { \alpha } )
		\frac { d t } { t } \right) ^ { 2 }     d E _ { f , f } ( \lambda ) \rightarrow 0
	\end{align*}
	since
	\[\lim _ { k \rightarrow \infty }    \left| \int _ { n _ { k } } ^ { n _ { k + m } }
	t ^ { 2 \beta / \alpha }  \lambda ^ { 2 \beta } exp( - 2 t \lambda ^ { \alpha } )
	\frac { d t } { t } \right| = 0 .\]
	
	Similarly, we can deal with the integral
	\[ \lim _ { k \rightarrow \infty } \int _ { \epsilon _ { k } } ^ { \epsilon _ { k + m } } \left( \widetilde{D}_{\alpha, t} ^ {L, \beta } \right) ^ { 2 } 	f ( x ) \frac { d t }  { t } . \]
\end{proof}

\begin{lemma}  \label{FG}
	Assume that $0 < \gamma \leq 1 $. For any pair of measurable functions $\widetilde{D}_{\alpha, s} ^ {L, \beta } (f) (x)$ and $\widetilde{D}_{\alpha, s} ^ {L, \beta } (g) (x)$ on
	$\mathbb { R } _ { + } ^ { n + 1 } $, we have
	\begin{align} \label{Sfunction}
		&\iint _ { \mathbb { R } ^ { n + 1 } _ { + } }
		\left|  \widetilde{D}_{\alpha, s} ^ {L, \beta } (f) (x) \right| \cdot
		\left|  \widetilde{D}_{\alpha, s} ^ {L, \beta } (g) (x) \right|    \frac { d x d s } { s }    \\
		&\lesssim  \sup _ { B } \left( \frac { 1 } { | B | ^ { 1 + 2 \gamma / n } }
		\int _ { 0 } ^  { r _ { B } ^ { 2 \alpha } }   \int _ { B }
		\left|  \widetilde{D}_{\alpha, s} ^ {L, \beta } (f) (x) \right| ^ { 2 } \frac { d x d s } { s } \right) ^ { 1 / 2 }  \nonumber     \\
		&\quad \times \left\{  \int _ { \mathbb{ R } ^ { n } }
		\left(  \iint_{ | x - y |< s ^ { 1 / ( 2 \alpha )  } }
		\left|  \widetilde{D}_{\alpha, s} ^ {L, \beta } (g) (x) \right| ^ { 2 } \frac{ s ^ { 1 / ( 2 \alpha ) - 1 } d y d s }
		{ s ^ {( n + 1) / ( 2 \alpha ) } } \right) ^ { \frac{n}{ 2 ( n + \gamma ) } }  d x  \right\}
		^ { 1 + \gamma / n }  . \nonumber
	\end{align}
\end{lemma}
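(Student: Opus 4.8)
The plan is to recognize the asserted inequality \eqref{Sfunction} as a tent‑space pairing and to prove it by the Coifman--Meyer--Stein stopping‑time argument, adapted to the parabolic scaling $s\sim r^{2\alpha}$ that is natural for $\widetilde{D}_{\alpha,s}^{L,\beta}$. Write $F(x,s):=\widetilde{D}_{\alpha,s}^{L,\beta}(f)(x)$ and $G(x,s):=\widetilde{D}_{\alpha,s}^{L,\beta}(g)(x)$; one may assume the right‑hand side is finite. First I would identify the two factors on the right. With $d\mu_F:=|F(x,s)|^{2}\,dx\,ds/s$, the first factor is the ``$\gamma$‑Carleson norm'' $N:=\sup_{B}\big(|B|^{-(1+2\gamma/n)}\mu_{F}(\widehat B)\big)^{1/2}$ over parabolic boxes $\widehat B:=B\times(0,r_{B}^{2\alpha})$, while, since $s^{1/(2\alpha)-1}\,s^{-(n+1)/(2\alpha)}=s^{-n/(2\alpha)-1}$, the second factor equals $\|\mathcal A_{\alpha}G\|_{L^{p}(\mathbb{R}^{n})}$, where $p:=n/(n+\gamma)\in(0,1]$, $\Gamma_{\alpha}(x):=\{(y,s):|x-y|<s^{1/(2\alpha)}\}$ and $\mathcal A_{\alpha}G(x)^{2}:=\iint_{\Gamma_{\alpha}(x)}|G(y,s)|^{2}\,dy\,ds/s^{\,n/(2\alpha)+1}$ is the parabolic area function. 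Thus the goal reduces to
\[
\iint_{\mathbb{R}^{n+1}_+}|F(x,s)|\,|G(x,s)|\,\frac{dx\,ds}{s}\;\lesssim\;N\,\|\mathcal A_{\alpha}G\|_{L^{p}(\mathbb{R}^{n})}.
\]

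Next I would introduce the level sets $\mathcal O_{k}:=\{\mathcal A_{\alpha}G>2^{k}\}$ ($k\in\mathbb{Z}$), which are open of finite measure with $\sum_{k}2^{kp}|\mathcal O_{k}|\sim\|\mathcal A_{\alpha}G\|_{L^{p}}^{p}$, together with Hardy--Littlewood enlargements $\mathcal O_{k}^{\ast}:=\{M\mathbf{1}_{\mathcal O_{k}}>c_{n}\}$ for a small dimensional constant $c_{n}\in(0,1)$ (so $\mathcal O_{k}\subset\mathcal O_{k}^{\ast}$ and $|\mathcal O_{k}^{\ast}|\lesssim|\mathcal O_{k}|$ by the weak $(1,1)$ bound), and parabolic tents $T(\mathcal O):=\{(y,s):B(y,s^{1/(2\alpha)})\subset\mathcal O\}$. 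A standard argument (lower semicontinuity of $\mathcal A_{\alpha}G$ and compactness of balls) shows that $G$ vanishes a.e.\ off $\bigcup_{k}T(\mathcal O_{k}^{\ast})$, while $\bigcap_{k}T(\mathcal O_{k}^{\ast})=\emptyset$ because $|\mathcal O_{k}^{\ast}|\to 0$; hence $\mathbb{R}^{n+1}_+$ splits, up to a $|G|^{2}\,dx\,ds/s$‑null set, into the disjoint annular regions $R_{k}:=T(\mathcal O_{k}^{\ast})\setminus T(\mathcal O_{k+1}^{\ast})$, and on each $R_{k}$ I would apply Cauchy--Schwarz, leaving two estimates.

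For the $F$‑part I would Whitney‑decompose $\mathcal O_{k}^{\ast}=\bigcup_{j}Q_{k,j}$ into essentially disjoint dyadic cubes: a point $(y,s)\in T(\mathcal O_{k}^{\ast})$ with $y\in Q_{k,j}$ satisfies $s^{1/(2\alpha)}\lesssim\ell(Q_{k,j})$, so $T(\mathcal O_{k}^{\ast})\subset\bigcup_{j}Q_{k,j}\times(0,C\ell(Q_{k,j})^{2\alpha})$, each factor contained in a parabolic Carleson box over a ball of radius $\sim\ell(Q_{k,j})$; summing the bound $\mu_{F}(\widehat B)\le N^{2}|B|^{1+2\gamma/n}$ and using superadditivity of $t\mapsto t^{1+2\gamma/n}$ yields $\iint_{T(\mathcal O_{k}^{\ast})}|F|^{2}\,dx\,ds/s\le N^{2}\sum_{j}|Q_{k,j}|^{1+2\gamma/n}\le N^{2}|\mathcal O_{k}^{\ast}|^{1+2\gamma/n}\lesssim N^{2}|\mathcal O_{k}|^{1+2\gamma/n}$. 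For the $G$‑part, choosing $c_{n}$ sufficiently small forces, for $(y,s)\in R_{k}$ (so $B(y,s^{1/(2\alpha)})\subset\mathcal O_{k}^{\ast}$, while some $z\in B(y,s^{1/(2\alpha)})$ lies outside $\mathcal O_{k+1}^{\ast}$), the volume estimate $|B(y,s^{1/(2\alpha)})\setminus\mathcal O_{k+1}|\ge\tfrac12|B(y,s^{1/(2\alpha)})|$; hence with $S_{k}:=\mathcal O_{k}^{\ast}\setminus\mathcal O_{k+1}$ one has $|S_{k}\cap B(y,s^{1/(2\alpha)})|\gtrsim|B(y,s^{1/(2\alpha)})|$ on $R_{k}$, and Fubini together with $\mathcal A_{\alpha}G\le 2^{k+1}$ on $S_{k}$ and $|S_{k}|\lesssim|\mathcal O_{k}|$ gives $\iint_{R_{k}}|G|^{2}\,dx\,ds/s\lesssim\int_{S_{k}}\mathcal A_{\alpha}G^{2}\lesssim 2^{2k}|\mathcal O_{k}|$.

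Combining the two estimates over $k$ gives $\iint|FG|\,dx\,ds/s\lesssim N\sum_{k}2^{k}|\mathcal O_{k}|^{1+\gamma/n}$, and the final step is the elementary inequality $\sum_{k}2^{k}|\mathcal O_{k}|^{1+\gamma/n}=\sum_{k}\big(2^{kp}|\mathcal O_{k}|\big)^{1/p}\le\big(\sum_{k}2^{kp}|\mathcal O_{k}|\big)^{1/p}\sim\|\mathcal A_{\alpha}G\|_{L^{p}}$, valid because $p(1+\gamma/n)=1$ and $1/p\ge 1$; this is \eqref{Sfunction}, and the hypothesis $0<\gamma\le 1$ enters only through $0<p\le 1$. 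I expect the main obstacle to be the $G$‑part estimate on $R_{k}$: one must fix the dimensional threshold $c_{n}$ in $\mathcal O_{k}^{\ast}$ small enough that, although each ball $B(y,s^{1/(2\alpha)})$ occurring in $R_{k}$ is only required to \emph{meet} the complement of $\mathcal O_{k+1}^{\ast}$, a fixed proportion of its volume still lies outside $\mathcal O_{k+1}$ itself --- this is exactly what legitimizes the Fubini comparison of $\iint_{R_{k}}|G|^{2}\,dx\,ds/s$ with $\int_{S_{k}}\mathcal A_{\alpha}G^{2}$. The Whitney covering of parabolic tents, the a.e.\ vanishing of $G$ off $\bigcup_{k}T(\mathcal O_{k}^{\ast})$, and the bookkeeping of the scaling $s\sim r^{2\alpha}$ are routine.
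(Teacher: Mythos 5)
Your argument is correct, but it takes a genuinely different route from the paper. The paper treats the inequality as a corollary of a black--box result: it quotes the general pairing inequality \cite[(5.3)]{harb 1} of Harboure--Salinas--Viviani,
\[
\iint_{\mathbb{R}^{n+1}_+}|F(x,t)|\,|G(x,t)|\,\frac{dx\,dt}{t}\ \lesssim\ \sup_B\Bigl(\frac{1}{|B|^{1+2\gamma/n}}\iint_{\widehat B}|F|^2\,\frac{dx\,dt}{t}\Bigr)^{1/2}\Bigl\{\int_{\mathbb{R}^n}\Bigl(\iint_{|x-y|<t}|G(y,t)|^2\frac{dy\,dt}{t^{n+1}}\Bigr)^{\frac{n}{2(n+\gamma)}}dx\Bigr\}^{1+\gamma/n},
\]
then substitutes $F(x,t)=t^{2\beta}L^{\beta}\exp(-t^{2\alpha}L^{\alpha})f(x)$, $G(x,t)=t^{2\beta}L^{\beta}\exp(-t^{2\alpha}L^{\alpha})g(x)$ and performs the change of variables $s=t^{2\alpha}$ (which is what produces the extra $s^{1/(2\alpha)-1}$ factor and the depth $r_B^{2\alpha}$). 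Your proof instead establishes the underlying tent--space pairing from scratch by the Coifman--Meyer--Stein stopping--time argument, rewritten in the parabolic scaling $s\sim r^{2\alpha}$: the level sets of the cone area function, the Hardy--Littlewood enlargements, the splitting into $R_k=T(\mathcal O_k^\ast)\setminus T(\mathcal O_{k+1}^\ast)$, the Whitney bound for the $F$-part, the Fubini comparison for the $G$-part, and the final $\ell^{1/p}\hookrightarrow\ell^1$ step with $p=n/(n+\gamma)$. Both approaches are valid; yours is self--contained and makes the $T^p\times T^\infty_\gamma$ duality explicit, whereas the paper's two--line argument is shorter at the price of citing a less elementary lemma. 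Two small remarks: in the lemma as printed the inner integrand appears as $\widetilde D_{\alpha,s}^{L,\beta}(g)(x)$, which is evidently a typo for $\widetilde D_{\alpha,s}^{L,\beta}(g)(y)$ (the paper's own proof uses $g(y)$, as do you); and the claim that $G$ vanishes $|G|^2\,dx\,ds/s$--a.e.\ off $\bigcup_k T(\mathcal O_k^\ast)$ is indeed standard in the tent--space literature, but it does need the a.e.\ qualification you include --- it cannot be asserted pointwise.
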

\begin{proof}
	It is easy to see from \cite [(5.3)] {harb 1} that for any pair of measurable functions $F$ and $G$ on $\mathbb { R } _ { + } ^ { n + 1 } $, the following inequality holds:
	\begin{equation*}
		\begin{aligned}
			&\iint _ { { \mathbb{ R } } ^ { n + 1 } _ { + } }  | F ( x , t ) | \cdot | G ( x , t ) |
			\frac { d x d t } { t }  \\
			& \lesssim \sup _ { B } \left\{ \frac { 1 } { | B | ^ { 1 + 2 \gamma / n } }
			\iint _ { \hat{ B } } | F ( x , t ) | ^{2}\frac { d x d t } { t } \right\} ^ { 1 / 2 }  \\
			& \quad \times  \left\{ \int _ { \mathbb { R } ^ { n } }    \left( \int _ { 0 } ^ { \infty } \int _ { | x - y | < t } | G ( y , t ) | ^ { 2 } \frac { d y d t }
			{ t ^ { n + 1 } } \right) ^ { \frac { n } { 2 ( n + \gamma ) } } d x \right\} ^ { 1 + \gamma / n } .
		\end{aligned}
	\end{equation*}
	By taking
	\begin{equation*}
		F ( x , t ) : = t ^ { 2 \beta } L ^ { \beta } \exp\left( - t ^ {2\alpha}  L^ { \alpha } \right)
		( f )  ( x )   ; \quad
		G ( x , t ) : = t ^ { 2 \beta } L ^ { \beta } \exp\left( - t ^ {2\alpha}  L^ { \alpha } \right)
		( g )  ( x )   ,
	\end{equation*}
	we can get
	\begin{align} \label{QfQg}
		&\iint _ { \mathbb { R } ^ { n + 1 } _ { + } }
		\left| t ^ { 2 \beta } L ^ { \beta } \exp\left( - t ^ {2\alpha}  L^ { \alpha } \right)
		( f )  ( x ) \right|  \nonumber
		\cdot \left| t ^ { 2 \beta } L ^ { \beta } \exp\left( - t ^ {2\alpha}  L^ { \alpha } \right)
		( g )  ( x )  \right|   \frac { d x d t } { t }       \\ 
		&\leq C \sup _ { B } \left( \frac { 1 } { | B | ^ { 1 + 2 \gamma / n } } \iint _ { \hat{ B } }
		\left| t ^ { 2 \beta } L ^ { \beta } \exp\left( - t ^ {2\alpha}  L^ { \alpha } \right)
		( f )  ( x )  \right| ^ { 2 }
		\frac { d x d t } { t } \right) ^ { 1 / 2 } \nonumber    \\ 
		& \quad \times
		\left\{ \int _ { \mathbb { R } ^ { n } }  \left( \int _ { 0 } ^ { \infty }  \int _ { | x - y | < t }
		\left|  t ^ { 2 \beta } L ^ { \beta } \exp\left( - t ^ {2\alpha}  L^ { \alpha } \right)
		( g )  ( x ) \right| ^ { 2 } \frac { d y d t }
		{ t ^ { n + 1 } } \right) ^ { \frac { n } { 2 ( n + \gamma ) } } d x \right\} ^ { 1 + \gamma / n } . 
	\end{align}
	
	On the one hand, via the change of variables, we can obtain
	\begin{align*}
		& \iint _ { \mathbb { R } ^ { n + 1 } _ { + } }
		\left| t ^ { 2 \beta } L ^ { \beta }
		\exp\left( - t ^ { 2 \alpha }  L ^ { \alpha } \right)   ( f ) ( x ) \right| \cdot
		\left| t ^ { 2 \beta } L ^ { \beta }
		\exp\left( - t ^ { 2 \alpha }  L ^ { \alpha } \right)   ( g ) ( x ) \right|    \frac { d x d t } { t }   \\ 
		& \simeq \iint _ { \mathbb { R } ^ { n + 1 } _ { + } }  \left| s ^ { \beta/\alpha } L ^ { \beta }
		\exp\left( - s L^ { \alpha } \right) ( f ) ( x ) \right| \cdot  \left| s ^ { \beta/\alpha } L ^ { \beta }
		\exp\left( - s L^ { \alpha } \right) ( g ) ( x ) \right|  \frac { d x  d s } { s }  .
	\end{align*}
	
	On the other hand, using change of variables again, we can get
	\begin{align*}
		& \sup _ { B } \left( \frac { 1 } { | B | ^ { 1 + 2 \gamma / n } } \int _ { 0 } ^ { r _ { B } }
		\int _ { B } \left| t ^ { 2 \beta } L ^ { \beta }
		\exp\left( - t ^ { 2 \alpha } L ^ { \alpha } \right) ( f ) ( x ) \right| ^ { 2 }
		\frac { d x d t } { t } \right) ^ { 1 / 2 }    \\ 
		& \lesssim  \sup _ { B } \left( \frac { 1 } { | B | ^ { 1 + 2 \gamma / n } }
		\int _ { 0 } ^  { r _ { B } ^ { 2 \alpha } }   \int _ { B } \left|  s ^ { \beta/\alpha } L ^ { \beta }
		\exp\left( - s L ^ { \alpha } \right) ( f ) ( x ) \right| ^ { 2 } \frac { d x d s } { s } \right) ^ { 1 / 2 }  .
	\end{align*}
	Meanwhile,
	\begin{align*}
		& \left( \int _ { \mathbb{ R } ^ { n } } \left(  \int_{0}^{ \infty }
		\int_{ | x - y |< t }  \left| t ^ { 2 \beta } L ^ { \beta }
		\exp\left( - t ^ { 2 \alpha }  L ^ { \alpha } \right) ( g ) ( y ) \right| ^ { 2 } \frac{ d y d t }
		{ t ^ { n + 1 } } \right) ^ { \frac{n}{ 2 ( n + \gamma ) } }  d x  \right)  ^ { 1 + \gamma / n }  \\
		&\lesssim \left(  \int _ { \mathbb{ R } ^ { n } } \left(  \int_{0}^{ \infty }
		\int\limits_{ | x - y |< s ^ { 1 / ( 2 \alpha )  } }   \frac{ \left|  s ^ { \beta/\alpha } L ^ { \beta }  exp \left( -  s L ^ {\alpha } \right) ( g ) ( y ) \right| ^ { 2 }  }
		{ s ^ { n / ( 2 \alpha ) + 1 } } d y d s \right) ^ { \frac{n}{ 2 ( n + \gamma ) } }  d x  \right)
		^ { 1 + \frac{ \gamma }{ n } }  .
	\end{align*}
	Then, we complete the proof of (\ref{Sfunction}).
	\end{proof}

For $\alpha \in ( 0 , 1 )$ and $\beta > 0 $, we define the area function
\begin{equation*}
	S ^ {L} _ { \alpha , \beta } ( h ) ( x ) : = \left( \iint_{ \left\{ ( y , t ) : | x - y | < t ^ { 1 / ( 2 \alpha ) } \right\}  }
	\left| \widetilde{D}_{\alpha, t} ^ {L, \beta } ( h ) ( y ) \right| ^ { 2 }
	\frac{  d y d t } { t ^ { n / ( 2 \alpha ) + 1 } }  \right) ^ { 1 /  2 }    .
\end{equation*}

By the functional calculus, it is easy to verify that the area function $S_ { \alpha , \beta}^{L}$ is bounded on $L ^ { 2 } \left( \mathbb { R } ^ { n } \right).$ Below we prove that the area function $S^{L}_{\alpha,\beta}$ is bounded from $H_{L}^{n / ( n + \gamma )} (\mathbb R^{n})$ to $L^{ n / ( n + \gamma ) }(\mathbb R^{n})$. The boundedness of square functions associated with operators have been studied in various settings. Let $L$ be a classical Schr\"odinger operators $L=-\Delta + V $. Denote by $S_{Q}$ the area function related with the heat semigroup $\{exp(-tL)\}_{t>0}$. Dziuba\'nski et al. obtained the $H^1_L-L^{1}$ boundedness of $S_{Q}$ in \cite[Lemma 6]{dziub 1}. For the degenerate Schr\"odinger operators $L=-{\omega}^{-1} \operatorname{div}(A(x)\nabla)+V$, Huang, Li and Liu obtained the $L^{2}$ boundedness of the area function $S^{L}_{P}$ related with the Poisson semigroup $\{P^{L}_{t}\}_{t>0}$ in \cite[Theorem 3.10]{huang 0} and the $H^1_L-L^{1}$ boundedness of $S_{h}^{L}$ in \cite[Theorem 3.9]{ huang 2}, where $S^{L}_{h}$ denotes the area function generated by the heat semigroup related with $L$.
For divergence form elliptic operators $L= -div(A \cdot \nabla )$, Yang-Meng \cite{yang-meng} studied the boundedness of Lusin-area function and the Littlewood-Paley $g^{*}_{\lambda}$-function related with $L$ on BMO spaces.
Later, Yang-Yang \cite{yang-yang} derived a square function characterization of the Orlicz-Hardy space via the Lusin area function $S_{h}$ associated with $\{ T_{t}\}_{t\geq 0}$.

\begin{theorem} \label{bdd S}
	Assume that $\alpha \in ( 0 , 1 ) $, $ \beta > 0 $ and $0 < \gamma < \min \{ 1 , 2 \alpha , 2 \alpha \beta \} $. Let $f$ be a linear combination of $H _ { L } ^ { n / ( n + \gamma ) }$-atoms.
	There exists a constant $ { C } $ such that
	\[\left\| S _ { \alpha , \beta } ^ { L } ( f ) \right\| _ { L ^ { n / ( n + \gamma ) }  }
	\leq C \displaystyle \| f \| _ { H _ { L } ^ { n / ( n + \gamma ) } } .\]
\end{theorem}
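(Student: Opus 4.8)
The plan is to reduce to a single atom and then exploit the pointwise size and smoothness estimates for the kernel $\widetilde{D}_{\alpha,t}^{L,\beta}(\cdot,\cdot)$ collected in Lemma \ref{bdd}, in the standard molecular/atomic manner. First I would note that since $p:=n/(n+\gamma)\le 1$, the quasi-norm is $p$-subadditive, so it suffices to prove $\|S_{\alpha,\beta}^L(a)\|_{L^p}\le C$ uniformly over all $H_L^p$-atoms $a$ associated with a ball $B=B(x_B,r_B)$. Fix such an atom; write $S=S_{\alpha,\beta}^L$. Split $\mathbb{R}^n=2B\cup(2B)^c$ and estimate $\|S(a)\|_{L^p(2B)}$ and $\|S(a)\|_{L^p((2B)^c)}$ separately. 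On $2B$, use Hölder's inequality with exponent $2/p>1$ together with the $L^2$-boundedness of $S$ (valid by the functional calculus, as remarked before the theorem) and $\|a\|_2\le|B|^{1/2-1/p}$: this gives
\[
\|S(a)\|_{L^p(2B)}\le |2B|^{1/p-1/2}\,\|S(a)\|_{L^2}\lesssim |B|^{1/p-1/2}\,|B|^{1/2-1/p}= C.
\]

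For the far part, fix $x\notin 2B$ and estimate the area integral defining $S(a)(x)^2$ pointwise. For $(y,t)$ with $|x-y|<t^{1/(2\alpha)}$ one has $\widetilde{D}_{\alpha,t}^{L,\beta}(a)(y)=\int \widetilde{D}_{\alpha,t}^{L,\beta}(y,z)\,a(z)\,dz$. Here one distinguishes the two types of atom. If $r_B\ge\rho(x_B)/4$ (no cancellation), use the size estimate (i) of Lemma \ref{bdd} together with $\|a\|_\infty\le|B|^{-1/p}$ and $\mathrm{supp}\,a\subset B$; the extra decay factor $(1+t^{1/(2\alpha)}/\rho(x_B))^{-N}$ for large $t$, combined with $r_B\sim\rho(x_B)$ and the polynomial decay in $(t^{1/(2\alpha)}+|y-z|)^{-(n+2\beta)}$ for small $t$, makes the area integral summable and yields, after integrating in $(y,t)$, a bound $\lesssim (r_B/|x-x_B|)^{n+2\beta}|B|^{-1/p}$ roughly; raising to the power $p$ and integrating over $(2B)^c$ converges because $p(n+2\beta)>n$, which is exactly the hypothesis $\gamma<2\alpha\beta$ rewritten via $p=n/(n+\gamma)$ (together with $\gamma<1$). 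If instead $r_B\le\rho(x_B)/4$ (cancellation $\int a=0$), write
\[
\widetilde{D}_{\alpha,t}^{L,\beta}(a)(y)=\int_B\bigl(\widetilde{D}_{\alpha,t}^{L,\beta}(y,z)-\widetilde{D}_{\alpha,t}^{L,\beta}(y,x_B)\bigr)a(z)\,dz,
\]
and apply the Hölder (smoothness) estimate (ii) of Lemma \ref{bdd} in the $z$-variable (using symmetry of the kernel to move the increment), which is licit since $|z-x_B|\le r_B\le t^{1/(2\alpha)}$ on the relevant range; this produces the gain $(r_B/t^{1/(2\alpha)})^{\delta'}$. Choosing $0<\delta'\le\delta$ with $\delta$ close to $\min\{\delta_0,2\beta\}$ and $\delta'$ slightly larger than $\gamma$ (possible because $\gamma<\min\{1,2\alpha,2\alpha\beta\}\le\delta_0$ and $\gamma<2\beta$ after absorbing the $2\alpha$), one integrates in $t$ and $y$ to get a bound of the form $\lesssim (r_B/|x-x_B|)^{n+\delta'}|B|^{-1/p}$; then $\|S(a)\|_{L^p((2B)^c)}^p\lesssim |B|^{-1}\int_{(2B)^c}(r_B/|x-x_B|)^{p(n+\delta')}\,dx\lesssim 1$ since $p(n+\delta')>n$ by $\delta'>\gamma$. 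Summing the near and far estimates gives $\|S(a)\|_{L^p}\le C$, and $p$-subadditivity finishes the proof.

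The main obstacle is the far-part estimate in the cancellation case: one must carefully split the area-integral region $\{(y,t):|x-y|<t^{1/(2\alpha)}\}$ according to whether $t^{1/(2\alpha)}$ is small or large compared with $|x-x_B|$ (and compared with $\rho(x_B)$), track the interplay between the Hölder gain $(r_B/t^{1/(2\alpha)})^{\delta'}$, the spatial polynomial decay $(t^{1/(2\alpha)}+|y-z|)^{-(n+2\beta)}$, and the auxiliary-function decay $(1+t^{1/(2\alpha)}/\rho(\cdot))^{-N}$, and verify that each resulting piece integrates to something controlled by a negative power of $|x-x_B|/r_B$ strong enough to survive the subsequent $L^p$-integration over $(2B)^c$. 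The key numerology is that $p=n/(n+\gamma)$ together with $\gamma<\min\{1,2\alpha,2\alpha\beta\}$ guarantees simultaneously $p(n+\delta')>n$ for an admissible $\delta'$ and the convergence of the $t$-integrals; keeping these inequalities aligned throughout is where the care lies, but each individual computation is routine given Lemma \ref{bdd}.
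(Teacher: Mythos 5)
Your proposal is correct and follows essentially the same route as the paper: reduce to a single atom by $p$-subadditivity, split $\mathbb{R}^n$ into a ball around the atom and its complement, treat the near part via Hölder plus $L^2$-boundedness of $S_{\alpha,\beta}^L$, and on the far part distinguish the cancellative atom case (where you write a difference of kernels and invoke the Hölder-smoothness estimate (ii) of Lemma~\ref{bdd}) from the case $r_B\sim\rho(x_B)$ (where you use only the size estimate (i), exploiting the $\rho$-decay factor). The paper uses $8B$ rather than $2B$ and further subdivides the $t$-range of the area integral into the pieces $I_1,\dots,I_5$ according to $t\lessgtr(r/2)^{2\alpha}$ and $t\lessgtr(|x-x_0|/2)^{2\alpha}$, which is precisely the ``careful splitting'' you flag as the remaining work; one small imprecision is your remark that $p(n+2\beta)>n$ is ``exactly'' $\gamma<2\alpha\beta$ rewritten---it is only implied by it (via $\alpha<1$, since $\gamma<2\alpha\beta<2\beta$), not equivalent to it.
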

\begin{proof}
	Let $a$ be an $H _ { L } ^ { n / ( n + \gamma ) }$-atom associated with a ball
	$B = B \left( x _ { 0 } , r \right) .$ Then we write
	$\left\| S _ { \alpha , \beta } ^ { L } ( a ) \right\| _ { L ^ { n / ( n + \gamma ) }  }
	^ { n / ( n + \gamma ) }   \leq I + I I ,$
	where
	\[\left\{ \begin{aligned}
		I &: = \int _ { 8 B } \left| S _ { \alpha , \beta } ^ { L } ( a ) ( x ) \right|
		^ { n / ( n + \gamma ) } d x ; \\
		I I &: = \int _ { ( 8 B ) ^ { c } } \left| S _ { \alpha , \beta } ^ { L } ( a ) ( x ) \right| ^ { n / ( n + \gamma ) } d x .
	\end{aligned} \right.\]
	
	By the $L^{2}$-boundedness of $ S^{L}_{\alpha,\beta} $ and H\"older's inequality, we can obtain
	\begin{align*}
		I \lesssim
		\left( \int _ { 8 B } \left| S _ { \alpha , \beta } ^ { L } a ( x ) \right| ^ { 2 } d x \right)
		^ { \frac {n} {2 ( n + \gamma )} } | B | ^ { \frac { n + 2 \gamma } {  2 ( n + \gamma ) } }
		\lesssim \| a \| _ { 2 } ^ { n / ( n + \gamma ) } | B | ^ { \frac { n + 2 \gamma } {  2 ( n + \gamma ) }} \lesssim 1 .
	\end{align*}
	
	Next, we deal with $I I$ in the following two cases.
	
	Case 1: $r < \rho \left( x _ { 0 } \right) / 4 .$ For this case,
	$\int _ { \mathbb { R } ^ { n } } a ( x ) d x = 0 $. Let $$ L (y,t) = \int _ { \mathbb { R } ^ { n } }
	\frac{ \left( \widetilde{D}_{\alpha, t} ^ {L, \beta }  ( y , x' )
		- \widetilde{D}_{\alpha, t} ^ {L, \beta }  ( y , x_ { 0 } ) \right)  a ( x' )} { t ^ { n / ( 2 \alpha ) + 1 } }  d x' .$$
    Then
	$\left( S _ { \alpha , \beta } ^ { L } ( a ) ( x ) \right) ^ { 2 }
	\lesssim I _ { 1 } ( x ) +  I _ { 2 } ( x )$, where
	\[\left\{ \begin{aligned}
		I_{ 1 } ( x ) &: = \int_{0}^{ ( | x- x_ { 0 } | / 2 ) ^ { 2 \alpha } }
		\int\limits_{ | x - y |< t ^ { 1 / ( 2 \alpha )  } }     L (y,t) ^ { 2 }
		d y d  t      ; \\
		I_{ 2 } ( x ) &: = \int_ { ( | x- x_ { 0 } | / 2 ) ^ { 2 \alpha } } ^{ \infty }
		\int\limits_{ | x - y |< t ^ { 1 / ( 2 \alpha )  } }   L (y,t) ^ { 2 }
		d y d  t    .
	\end{aligned} \right.\]
	
	For $I _ { 1 }$, since $| x - y | \leq \left| x - x _ { 0 } \right| / 2 $,
	$\left| y - x ^ { \prime } \right| \sim \left| y - x _ { 0 } \right|$ for
	$x ^ { \prime } \in B \left( x _ { 0 } , r \right)$ and $x \notin B \left( x _ { 0 } , 8 r \right) .$
	Because $0 < t < \left| x - x _ { 0 } \right| ^ { 2 \alpha } / 2 ^ { 2 \alpha }$ and
	$| x - y | < t ^ { 1 / ( 2 \alpha ) } $, $| x - y | < \left| x - x _ { 0 } \right| / 2$.
	This implies that $\left| y - x _ { 0 } \right| \gtrsim \left| x - x _ { 0 } \right| / 2 $.
	We can use Lemma \ref{bdd} to deduce that there exists
	$\delta ^ { \prime } > \gamma$ such that
	\begin{align*}
		\left| I_{ 1 } ( x ) \right|
		\lesssim   \int_{0}^{ ( | x- x_ { 0 } | / 2 ) ^ { 2 \alpha } }
		\int_{ | x - y |< t ^ { 1 / ( 2 \alpha )  } }
		\frac {  \left( { r } / { t ^ {1 / ( 2 \alpha ) } } \right) ^ { 2 \delta '}
			{ | B | ^ { - 2 \gamma / n } }  t ^ { - n / ( 2 \alpha ) - 1 }  d y d t }
		{  t ^ { n /  \alpha  } ( 1 + | y - x _ { 0 } | / t ^ { 1 / ( 2 \alpha ) } ) ^ { 2 n + 4 \beta } }  	  ,
	\end{align*}
	which, via a direct computation, gives
	\[\int _ { ( 8 B ) ^ c } \left| I _ { 1 } ( x ) \right| ^ { { n } / {  ( 2 n + 2 \gamma ) } } d x
	\lesssim \int _ { ( 8 B ) ^ c } \left( \frac { r ^ {  \delta ' - \gamma  } }
	{ \left| x - x _ { 0 } \right| ^ { n + \delta ^ { \prime } } } \right) ^ { n / ( n + \gamma ) } d x
	\lesssim 1 .\]
	
	Let us continue with $I _ { 2 }$. Similarly, it follows from Lemma \ref{bdd} that
	\[\int _ { ( 8 B ) ^ c } \left| I _ { 2 } ( x ) \right| ^ { { n } / {  ( 2 n + 2 \gamma ) } } d x
	\lesssim \int _ { ( 8 B ) ^ c } \left( \frac { r ^ {  \delta ' - \gamma  } }
	{ \left| x - x _ { 0 } \right| ^ { n + \delta ^ { \prime } } } \right) ^ { n / ( n + \gamma ) } d x
	\lesssim 1 .\]
	
	Case 2: $\rho \left( x _ { 0 } \right) / 4 < r < \rho \left( x _ { 0 } \right) .$ For this case, the atom $a$ has no canceling condition. We have
	$\left( S _ { \alpha , \beta } ^ { L } ( a ) ( x ) \right) ^ { 2 }
	\lesssim  I _ { 3 } ( x ) + I _ { 4 } ( x ) + I _ { 5 } ( x ) $, where
	\[\left\{ \begin{aligned}
		I_{ 3 } ( x ) &: = \int_{0}^{ ( r / 2 ) ^ { 2 \alpha } }
		\int_{ | x - y |< t ^ { 1 / ( 2 \alpha )  } }  \left| \int _ { \mathbb { R } ^ { n } }
		\widetilde{D}_{\alpha, t} ^ {L, \beta }  ( y , x' ) a ( x' ) d x'  \right| ^ { 2 }
		\frac{  d y d  t } { t ^ { n / ( 2 \alpha ) + 1 } }  ; \\
		I_{ 4 } ( x ) &: = \int_{ ( r / 2 ) ^ { 2 \alpha } } ^ { ( | x- x_ { 0 } | / 2 ) ^ { 2 \alpha } }
		\int_{ | x - y |< t ^ { 1 / ( 2 \alpha )  } }  \left| \int _ { \mathbb { R } ^ { n } }
		\widetilde{D}_{\alpha, t} ^ {L, \beta }  ( y , x' ) a ( x' ) d x'  \right| ^ { 2 }
		\frac{  d y d  t } { t ^ { n / ( 2 \alpha ) + 1 } }  ; \\
		I_{ 5 } ( x ) &: = \int _ { ( | x- x_ { 0 } | / 2 ) ^ { 2 \alpha } } ^ { \infty }
		\int_{ | x - y |< t ^ { 1 / ( 2 \alpha )  } }  \left| \int _ { \mathbb { R } ^ { n } }
		\widetilde{D}_{\alpha, t} ^ {L, \beta }  ( y , x' ) a ( x' ) d x'  \right| ^ { 2 }
		\frac{  d y d  t } { t ^ { n / ( 2 \alpha ) + 1 } }  .
	\end{aligned} \right.\]
	
	For $x \in ( 8 B ) ^ { c }$ and $x ^ { \prime } \in B$, $\left| x ^ { \prime } - x _ { 0 } \right| < \left| x - x _ { 0 } \right| / 8 .$ On the other hand,
	for $ t \in \left( 0 , ( r / 2 \right) ^ { 2 \alpha })$, $ | x - y |
	\leq  | x- x_ { 0 } | / 8 $, which means that $\left| y - x ^ { \prime } \right| \geq c \left| x - x _ { 0 } \right| $. So we get
	\[\int _ { ( 8 B ) ^ c } \left| I_{ 3 } ( x ) \right| ^ { { n } / {  ( 2 n + 2 \gamma ) } } d x \lesssim \int _ { ( 8 B ) ^ { c } } \left( \frac { r ^ { 2 \beta - \gamma } }
	{ \left| x - x _ { 0 } \right| ^ { n +  2 \beta } } \right)
	^ { n / ( n + \gamma ) } d x \lesssim 1 .\]
	
	Notice that $r / 2 \leq t ^ { 1 / ( 2 \alpha) } \lesssim \left| x - x _ { 0 } \right| / 4$ for $t \in \left( ( r / 2 ) ^ { 2 \alpha } ,  ( | x- x_ { 0 } | / 2 ) ^ { 2 \alpha } \right) .$ It can be deduced from the triangle inequality that
	$\left| y - x ^ { \prime } \right| \sim \left| x - x _ { 0 } \right|$.
	Similarly to $ I_{ 4 } ( x ) $, we can obtain
	\[\int _ { ( 8 B ) ^ c } \left| I_{ 3 } ( x ) \right| ^ { { n } / {  ( 2 n + 2 \gamma ) } } d x \lesssim \int _ { ( 8 B ) ^ { c } } \left( \frac { r ^ { 2 \beta - \gamma } }
	{ \left| x - x _ { 0 } \right| ^ { n +  2 \beta } } \right)	^ { n / ( n + \gamma ) } d x \lesssim 1 .\]
	
	Owing to $r \sim \rho \left( x _ { 0 } \right) $, the estimate for $I _ { 5 }$ is similar to that for $I _ { 4 }$, and thus we have
	\[\int _ { ( 8 B ) ^ c } \left| I _ { 5 } ( x ) \right|
	^ {  { n } / { ( 2 n + 2 \gamma ) } } d x
	\lesssim \int _ { ( 8 B ) ^ c }  \left(  \frac { r ^ { N - \gamma  } } { \left| x - x _ { 0 } \right| ^ { n + N  } } \right) ^ { n / ( n + \gamma ) }  d x
	\lesssim 1 .\]
\end{proof}

\begin{lemma} \label{sup R q}
	Let $\alpha \in ( 0 , 1 ) $ and $q_{ t } ( \cdot , \cdot )$ be a function of
	$x , y \in \mathbb { R } ^ { n }$ and $t > 0 $. Assume that for each $N > 0$, there exists a constant
	${ C } _ { N } $ such that for $\theta > \gamma $,
	\begin{equation} \label{bdd q}
		\left| q _ { t } ( x , y ) \right| \leq C _ { N } \left( 1 + \frac { t ^ { 1 / ( 2 \alpha) } }
		{ \rho ( x ) } + \frac { t ^ { 1 / ( 2 \alpha) } } { \rho ( y ) } \right) ^ { - N }
		t ^ { - n / ( 2 \alpha) } \left( 1 + \frac { | x - y | } {t ^ { 1 / ( 2 \alpha) }  } \right)
		^ { - ( n + \theta ) } .
	\end{equation}
	Then for any $H _ { L } ^ { n / ( n + \gamma ) }$-atom a supported on $B \left( x _ { 0 } , r \right) ,$ there exists a constant $C _ { x _ { 0 } , r } $ such that
	\[\sup _ { t > 0 } \left| \int _ { \mathbb{ R } ^ { n } } q _ { t } ( x , y ) a ( y ) d y \right|
	\leq C _ { N , x _ { 0 } , r } ( 1 + | x | ) ^ { - n - \theta } , x \in \mathbb { R } ^ { n } .\]
\end{lemma}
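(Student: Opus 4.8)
The plan is to bound $\int_{\mathbb{R}^n}q_t(x,y)\,a(y)\,dy$ using \emph{only} the size estimate $(\ref{bdd q})$ together with the support and the $L^\infty$-size of the atom; since no smoothness of $q_t$ is assumed, the vanishing moment of $a$ plays no role. Write $s:=t^{1/(2\alpha)}$ and recall that an $H_L^{n/(n+\gamma)}$-atom $a$ supported in $B(x_0,r)$ has $r\le\rho(x_0)$ and $\|a\|_{L^\infty}\le|B(x_0,r)|^{-(n+\gamma)/n}\lesssim r^{-(n+\gamma)}$. For $y$ in the support one has $|y-x_0|<r\le\rho(x_0)$, so Lemma~\ref{special mV}(2) gives $\rho(y)\sim\rho(x_0)$; discarding $s/\rho(x)\ge 0$ in $(\ref{bdd q})$ then yields
\[
\Bigl|\int_{\mathbb{R}^n}q_t(x,y)\,a(y)\,dy\Bigr|\lesssim r^{-(n+\gamma)}\Bigl(1+\frac{s}{\rho(x_0)}\Bigr)^{-N}\int_{B(x_0,r)}s^{-n}\Bigl(1+\frac{|x-y|}{s}\Bigr)^{-(n+\theta)}\,dy.
\]
As $x_0$ is fixed, $(1+|x|)\sim_{x_0}(1+|x-x_0|)$, so it suffices to prove the bound with $(1+|x|)^{-n-\theta}$ replaced by $(1+|x-x_0|)^{-n-\theta}$.

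I would then split according to whether $x$ is near or far from $B(x_0,r)$. If $|x-x_0|\le 2r$, keep only $s^{-n}(1+|x-y|/s)^{-(n+\theta)}$, extend the integral to $\mathbb{R}^n$, and use $z=(x-y)/s$ to get $\int_{\mathbb{R}^n}s^{-n}(1+|x-y|/s)^{-(n+\theta)}\,dy=\int_{\mathbb{R}^n}(1+|z|)^{-(n+\theta)}\,dz\lesssim 1$, finite because $\theta>\gamma\ge 0$. Thus $\sup_{t>0}|\cdots|\lesssim r^{-(n+\gamma)}$, which on this set is $\lesssim_{x_0,r}(1+|x-x_0|)^{-n-\theta}$ since $(1+|x-x_0|)^{-n-\theta}\ge(1+2r)^{-n-\theta}$.

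The bulk of the work is the far regime $d:=|x-x_0|>2r$, where $|x-y|\ge d/2$ on $B(x_0,r)$, so
\[
\Bigl|\int_{\mathbb{R}^n}q_t(x,y)\,a(y)\,dy\Bigr|\lesssim r^{-\gamma}\Bigl(1+\frac{s}{\rho(x_0)}\Bigr)^{-N}s^{-n}\Bigl(1+\frac{d}{2s}\Bigr)^{-(n+\theta)},
\]
and I would take $\sup_{s>0}$ by splitting at $s=d$. For $s\le d$, since $(1+d/(2s))^{-(n+\theta)}\lesssim(s/d)^{n+\theta}$, the right-hand side is $\lesssim r^{-\gamma}d^{-n-\theta}\,(1+s/\rho(x_0))^{-N}s^{\theta}$, and $\sup_{s>0}(1+s/\rho(x_0))^{-N}s^{\theta}\lesssim_N\rho(x_0)^{\theta}$ once $N>\theta$. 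For $s>d$, the map $s\mapsto(1+s/\rho(x_0))^{-N}s^{-n}$ is decreasing, so its supremum is its value at $s=d$, namely $(1+d/\rho(x_0))^{-N}d^{-n}\le\rho(x_0)^{\theta}d^{-n-\theta}$ because $(1+d/\rho(x_0))^{N}\ge(d/\rho(x_0))^{\theta}$. Hence $\sup_{t>0}|\cdots|\lesssim_{N,x_0,r}d^{-n-\theta}\lesssim_{x_0,r}(1+d)^{-n-\theta}$, the last step from $d>2r$; combining the two regimes and absorbing $\rho(x_0)^{\theta}$ and $r^{-(n+\gamma)}$ into $C_{N,x_0,r}$ finishes the proof.

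The step I expect to be the main obstacle is this far regime. The crude size estimate alone produces only $d^{-n}$ decay, attained near $s\sim d$, so the extra $d^{-\theta}$ must be extracted from the auxiliary factor $(1+s/\rho(x_0))^{-N}$ — which is precisely why the hypothesis $r\le\rho(x_0)$ (so $\rho(y)$ may be replaced by $\rho(x_0)$) and the freedom to take $N$ arbitrarily large are needed. The remaining estimates are elementary one-variable extremal computations.
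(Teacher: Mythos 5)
Your argument is correct and follows essentially the same route as the paper: split according to whether $x\in B(x_0,2r)$ or not, use $\rho(y)\sim\rho(x_0)$ on the support of $a$ to replace $\rho(y)$ by $\rho(x_0)$, and in the far regime extract the extra $|x-x_0|^{-\theta}$ decay from the factor $(1+t^{1/(2\alpha)}/\rho(x_0))^{-N}$ with $N$ large. The paper simply asserts the resulting bound $|\int q_t(x,y)a(y)\,dy|\lesssim (\rho(x_0))^{\theta} r^{-\gamma}|x-x_0|^{-(n+\theta)}$ without carrying out the supremum over $t$, whereas you supply the explicit one-variable extremal computation (splitting at $s=|x-x_0|$), which is exactly the missing detail and is done correctly.
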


\begin{proof}
	If $x \in B \left( x _ { 0 } , 2 r \right) $, $ 1 + |x|
	\leq  1 + 2r + | x _ {0}| $. It follows from the condition
	$\| a \| _ { \infty } \leq \left| B \left( x _ { 0 } , r \right) \right| ^ { - 1 - \gamma / n }$ that
	\begin{align*}
		\left| \int _ { \mathbb { R } ^ { n } } q _ { t } ( x , y ) a ( y ) d y \right|
		\leq   \int _ { B \left( x _ { 0 } , r \right) } 
		\frac { C_ { N } t ^ { - n /  ( 2 \alpha) } r ^ { - n - \gamma } } {\left( 1 +  { | x - y | } / { t ^ { 1 /  ( 2 \alpha) } } \right) ^ {  n + \theta }} d y
		\leq \frac{ C _ { N , x _ { 0 } , r } } { ( 1 + | x | ) ^ {  n + \theta } }.
	\end{align*}
	
	If $x \notin B \left( x _ { 0 } , 2 r \right) $, for any $y \in B \left( x _ { 0 } , r \right) $,
	$| x - y | \sim \left| x - x _ { 0 } \right| .$ On the other hand,
	$\rho ( y ) \sim \rho \left( x _ { 0 } \right) $ since $r < \rho \left( x _ { 0 } \right)$ and
	$\left| y - x _ { 0 } \right| < r $. Then by (\ref{bdd q}), we can obtain
	\begin{align*}
		\left| \int _ { \mathbb { R } ^ { n } } q _ { t } ( x , y ) a ( y ) d y \right|
		&\leq C_ { \theta }  \left(  \rho \left( x _ { 0 } \right) \right) ^ { \theta }  r ^ { - \gamma }
		\left| x - x _ { 0 } \right| ^ { - ( n + \theta ) }
		: = C_ { \theta , x _ { 0 } , r }  \left| x - x _ { 0 } \right| ^ { - ( n + \theta ) } .
	\end{align*}
	
	Because $x \notin B \left( x _ { 0 } , 2 r \right) $, set $x = x _ { 0 } + 2 r z $, where
	$| z | \geq 1 $. Then $ 1 + | x | \leq 1 + \left| x _ { 0 } \right| + 2 r | z | $
	and
	\begin{equation*}
		\frac{ 1 + \left| x _ { 0 } \right| + 2 r	}{ 2 r } \left| x - x _ { 0 } \right|
		=  ( 1 + \left| x _ { 0 } \right| + 2 r ) | z | \geq 1 + \left| x _ { 0 } \right| + 2 r | z |  ,
	\end{equation*}
	which implies that $\left| x _ { 0 } - x \right| \geq ( 1 + | x | ) / C _ { x _ { 0 } , r } .$
\end{proof}

\subsection{Characterizations of Campanato type spaces associated with $L$ } \label{sec 4.2}

Consider the Cauchy initial value problem
\begin{equation} \label{cauchy}
	\left\{ \begin{aligned}
		& \partial_{ t } u (x,t) - L^{\alpha} u (x,t) = 0 , & (x,t) \in \mathbb{ R }^{n+1}_{+}  ; \\
		& u (x,0) = f(x) , & x \in \mathbb{ R }^{n} .
	\end{aligned} \right.
\end{equation}

\begin{definition}
	Let $\alpha\in ( 0 , 1)$, $\beta>0$ and $0 < \gamma < \min \{ 1 , 2 \alpha , 4 \beta \}$. The space	$H_{L,\gamma}^{\alpha, \beta}(\mathbb{R}^{n+1}_{+} ) $ is defined  as the set of all measurable functions $ u : \mathbb{ R } ^{n+1}_{+}  \rightarrow \mathbb{ R }$ satisfying
	\begin{equation}
		\| u \|_{ H _{ L,\gamma }^{\alpha , \beta} (\mathbb{ R }^{n+1}_{+}) } :=
		\left( \frac{1}{|B|^{1+2\gamma / n }} \int_{0}^{r_{B}^{2\alpha}} \int_{ B }
		| t ^ { \beta / \alpha } L ^{ \beta }   u ( x , t ) | ^ { 2 }  \frac{ d x d t }{ t } \right) ^ { 1 / 2 }<\infty.
	\end{equation}
\end{definition}

\begin{theorem} \label{fe}
	Let $V \in B _ { q } $, $q > n $. Assume that $\alpha \in ( 0 , 1 ) , \beta > 0 $, and $0 < \gamma < \min \{ 1 , 2 \alpha , 4 \beta \} $. Let $f$ be a function such that for some $\epsilon > 0$,
	\begin{equation} \label{Rf1}
		\int _ { \mathbb { R } ^ { n } }
		\frac { | f ( x ) | } { ( 1 + | x | ) ^ { n + \gamma + \epsilon } } d x < \infty .
	\end{equation}
	\item[\rm (i) ] If $ f \in  \Lambda  _ { L , \gamma }  \left( \mathbb { R } ^ { n } \right) $, $ u (x,t) = exp ( - t L ^ {\alpha } ) f (x) $ is a solution to equations (\ref{cauchy}) with $ \| u \|_{ H _{ L,\gamma }^{\alpha, \beta} (\mathbb{ R }^{n+1}_{+}) } < \infty $.
	\item[\rm (ii) ] If $u(\cdot , \cdot)$ is a solution to equations (\ref{cauchy}) and satisfies $ \| u \|_{ H _{ L,\gamma }^{\alpha,\beta} (\mathbb{ R }^{n+1}_{+}) } < \infty $, then $ f \in  \Lambda  _ { L , \gamma }  \left( \mathbb { R } ^ { n } \right) $.
\end{theorem}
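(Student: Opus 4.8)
The plan is to prove the two assertions separately. For \emph{(i)}: the fact that $u(x,t)=\exp(-tL^{\alpha})f(x)$ solves (\ref{cauchy}) is routine — by the analyticity of $\{\exp(-tL^{\alpha})\}_{t>0}$ and the growth condition (\ref{Rf1}) (which makes the kernel integral defining $u$ absolutely convergent), $u$ is smooth on $\mathbb{R}^{n+1}_{+}$, satisfies $\partial_{t}u+L^{\alpha}u=0$ and $u(\cdot,t)\to f$ as $t\to0^{+}$; note that $t^{\beta/\alpha}L^{\beta}u(\cdot,t)=\widetilde{D}_{\alpha,t}^{L,\beta}f$. To estimate the $H_{L,\gamma}^{\alpha,\beta}$-norm, fix a ball $B=B(x_{B},r_{B})$ and split $f=f_{1}+f_{2}+f_{3}$ with $f_{1}=(f-f(B,V))\chi_{4B}$, $f_{2}=(f-f(B,V))\chi_{(4B)^{c}}$ and $f_{3}=f(B,V)$, where $f(B,V)$ is the quantity of Definition~\ref{def BMO} (so $f_{3}\equiv0$ once $r_{B}\ge\rho(x_{B})$). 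For $f_{1}$ I would use the quadratic identity of Lemma~\ref{isometry} (equivalently the $L^{2}$-boundedness of $\widetilde{D}_{\alpha,t}^{L,\beta}$ from spectral calculus) together with the self-improved oscillation inequality (\ref{equ def BMO p}) for $p=2$, giving $\int_{0}^{r_{B}^{2\alpha}}\int_{B}|\widetilde{D}_{\alpha,t}^{L,\beta}f_{1}(x)|^{2}\frac{dx\,dt}{t}\lesssim\|f_{1}\|_{2}^{2}\lesssim|B|^{1+2\gamma/n}\|f\|_{\Lambda_{L,\gamma}}^{2}$, the last step being the standard localized-Campanato computation (with a short case analysis according to whether $4r_{B}<\rho(x_{B})$ or not). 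For $f_{2}$ I would bound the kernel by Lemma~\ref{bdd}(i), decompose $(4B)^{c}=\bigcup_{k\ge2}(2^{k+1}B\setminus2^{k}B)$, control $\int_{2^{k+1}B}|f-f(B,V)|\lesssim(2^{k}r_{B})^{n+\gamma}\|f\|_{\Lambda_{L,\gamma}}$ by telescoping the averages $f_{2^{j}B}$ (geometric in $j$ since $\gamma>0$, treating the transition across the critical scale $\rho(x_{B})$ as in the localized setting), and then sum over $k$ against $\int_{0}^{r_{B}^{2\alpha}}t^{2\beta/\alpha}\frac{dt}{t}\simeq r_{B}^{4\beta}$; the upper bound on $\gamma$ in the hypothesis is exactly what forces the resulting series to converge to $\lesssim|B|^{1+2\gamma/n}\|f\|_{\Lambda_{L,\gamma}}^{2}$. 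For $f_{3}$ (only when $r_{B}<\rho(x_{B})$) I would write $\widetilde{D}_{\alpha,t}^{L,\beta}f_{3}(x)=f(B,V)\int_{\mathbb{R}^{n}}\widetilde{D}_{\alpha,t}^{L,\beta}(x,y)\,dy$, estimate the last integral by $(t^{1/(2\alpha)}/\rho(x))^{\delta'}$ with some $\delta'>\gamma$ via Lemma~\ref{bdd}(iii) (legitimate since $t^{1/(2\alpha)}<r_{B}<\rho(x_{B})\sim\rho(x)$ for $x\in B$), and bound $|f(B,V)|=|f_{B}|\lesssim\rho(x_{B})^{\gamma}\|f\|_{\Lambda_{L,\gamma}}$ by telescoping from $B$ up to $B(x_{B},\rho(x_{B}))$; the strict inequality $\delta'>\gamma$ then absorbs the factor $(r_{B}/\rho(x_{B}))<1$. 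Adding the three contributions yields $\|u\|_{H_{L,\gamma}^{\alpha,\beta}(\mathbb{R}^{n+1}_{+})}\lesssim\|f\|_{\Lambda_{L,\gamma}}$.

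For \emph{(ii)}, I would argue by duality, using $\Lambda_{L,\gamma}(\mathbb{R}^{n})=(H_{L}^{n/(n+\gamma)}(\mathbb{R}^{n}))^{*}$ (\cite[Theorem~2.1]{yang 3}): since a solution of (\ref{cauchy}) satisfying the given growth is unique, $u(\cdot,t)=\exp(-tL^{\alpha})f$ and hence $t^{\beta/\alpha}L^{\beta}u(\cdot,t)=\widetilde{D}_{\alpha,t}^{L,\beta}f$, and it suffices to show that for every finite linear combination $g$ of $H_{L}^{n/(n+\gamma)}$-atoms one has $|\int_{\mathbb{R}^{n}}f\,g\,dx|\lesssim\|u\|_{H_{L,\gamma}^{\alpha,\beta}(\mathbb{R}^{n+1}_{+})}\|g\|_{H_{L}^{n/(n+\gamma)}}$. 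Such a $g$ lies in $L^{2}(\mathbb{R}^{n})$, so Lemma~\ref{isometry} gives the reproducing identity $g=c_{\alpha,\beta}\int_{0}^{\infty}(\widetilde{D}_{\alpha,t}^{L,\beta})^{2}g\,\frac{dt}{t}$ in $L^{2}$. Using the self-adjointness of $\widetilde{D}_{\alpha,t}^{L,\beta}$ together with the pointwise decay $\sup_{t>0}|\widetilde{D}_{\alpha,t}^{L,\beta}g(x)|\lesssim(1+|x|)^{-n-\theta}$ for some $\theta>\gamma$ (Lemma~\ref{sup R q}, applied with the kernel bound of Lemma~\ref{bdd}(i)) — which, together with (\ref{Rf1}), justifies the Fubini interchange and the passage to the limit in the truncations $\int_{\epsilon}^{N}$ — I would rewrite $\int_{\mathbb{R}^{n}}f\,g\,dx=c_{\alpha,\beta}\iint_{\mathbb{R}^{n+1}_{+}}\widetilde{D}_{\alpha,t}^{L,\beta}f(x)\,\widetilde{D}_{\alpha,t}^{L,\beta}g(x)\,\frac{dx\,dt}{t}$. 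Applying Lemma~\ref{FG} to the pair $(\widetilde{D}_{\alpha,t}^{L,\beta}f,\widetilde{D}_{\alpha,t}^{L,\beta}g)$, the first factor on the right-hand side there is exactly $\|u\|_{H_{L,\gamma}^{\alpha,\beta}(\mathbb{R}^{n+1}_{+})}$, finite by hypothesis, while the second factor — after simplifying the weight $s^{1/(2\alpha)-1}s^{-(n+1)/(2\alpha)}=s^{-n/(2\alpha)-1}$ — equals $\|S_{\alpha,\beta}^{L}g\|_{L^{n/(n+\gamma)}}$, which by Theorem~\ref{bdd S} is $\lesssim\|g\|_{H_{L}^{n/(n+\gamma)}}$. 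This gives the required bound, so $f$ represents a bounded functional on $H_{L}^{n/(n+\gamma)}$, i.e.\ $f\in\Lambda_{L,\gamma}(\mathbb{R}^{n})$ with $\|f\|_{\Lambda_{L,\gamma}}\lesssim\|u\|_{H_{L,\gamma}^{\alpha,\beta}}$.

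The main obstacle is, in \emph{(i)}, the interplay with the auxiliary function $\rho$: the telescoping of $\{f_{2^{k}B}\}$ stops being uniform once $2^{k}r_{B}\gtrsim\rho(x_{B})$, and the constant term $f_{3}$ cannot be dispatched by $\int_{\mathbb{R}^{n}}\widetilde{D}_{\alpha,t}^{L,\beta}(x,y)\,dy=0$ (which fails because $V\not\equiv0$) but must be controlled through the quantitative cancellation estimate Lemma~\ref{bdd}(iii); organizing all these contributions while keeping the exponents inside the admissible range of $\gamma$ is the technical heart of the argument. In \emph{(ii)} the only delicate point is making the reproducing-formula manipulation rigorous when $f$ merely satisfies (\ref{Rf1}) rather than $f\in L^{2}$, which is handled by the polynomial decay of $\widetilde{D}_{\alpha,t}^{L,\beta}$ applied to an atom, making the integral over $\mathbb{R}^{n+1}_{+}$ absolutely convergent and all interchanges legitimate.
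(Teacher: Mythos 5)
Your proof of part~(ii) follows essentially the same route as the paper's: you invoke the reproducing formula from Lemma~\ref{isometry}, the duality inequality of Lemma~\ref{FG}, the $H_{L}^{n/(n+\gamma)}\to L^{n/(n+\gamma)}$ bound for $S_{\alpha,\beta}^{L}$ in Theorem~\ref{bdd S}, and Lemma~\ref{sup R q} to legitimize the interchange of integrals, concluding by duality that $f$ induces a bounded functional on $H_{L}^{n/(n+\gamma)}$. This matches the paper's argument for the converse direction line by line.

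For part~(i), however, you take a genuinely different route. The paper establishes the pointwise bound
\[
\bigl|\widetilde{D}_{\alpha,t}^{L,\beta}f(x)\bigr| \lesssim t^{\gamma/(2\alpha)}\|f\|_{\Lambda_{L,\gamma}}
\]
directly, by writing
\[
\widetilde{D}_{\alpha,t}^{L,\beta}f(x)=\int\widetilde{D}_{\alpha,t}^{L,\beta}(x,y)\bigl(f(y)-f(x)\bigr)\,dy + f(x)\int\widetilde{D}_{\alpha,t}^{L,\beta}(x,y)\,dy
\]
and then using the localized H\"older bound $|f(x)-f(y)|\lesssim\|f\|_{\Lambda_{L,\gamma}}|x-y|^{\gamma}$ (valid precisely because $\gamma>0$, via \cite[Proposition~4.6]{ma 1}) for the first term and $|f(x)|\lesssim\rho(x)^{\gamma}\|f\|_{\Lambda_{L,\gamma}}$ together with Lemma~\ref{bdd}(iii) for the second; the Carleson estimate then follows from a single integration. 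You instead perform the classical decomposition $f=f_{1}+f_{2}+f_{3}$ relative to the ball, treating the near part by $L^{2}$-boundedness (Lemma~\ref{isometry}) and the $L^{2}$ oscillation characterization (\ref{equ def BMO p}), the far part by kernel decay (Lemma~\ref{bdd}(i)) and telescoping, and the constant part by the cancellation estimate Lemma~\ref{bdd}(iii). Both approaches are correct and rest on the same kernel estimates; the paper's $L^{\infty}$ argument is shorter and exploits the H\"older nature of $\Lambda_{L,\gamma}$ for $\gamma>0$, whereas your decomposition is more elementary, uses only the $L^{2}$-oscillation bound rather than the H\"older bound, and would pass verbatim to the endpoint $\gamma=0$. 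Both arguments implicitly require $\gamma<2\beta$ and $\gamma<\delta'$ with $\delta'$ from Lemma~\ref{bdd}(iii) — a restriction strictly stronger than the stated $\gamma<\min\{1,2\alpha,4\beta\}$ — but you inherit that mismatch from the paper's own hypotheses, so it is not a defect of your argument.
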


\begin{proof}
	$ (i) \Longrightarrow ( i i )$.
	Let $ f \in \Lambda _ { L , \gamma } \left( \mathbb { R } ^ { n } \right) $. We have
	$\left|  \widetilde{D}_{\alpha, t} ^ {L, \beta }   f ( x ) \right|
	\lesssim I + I I $, where
	\[\left\{ \begin{aligned}
		I : & = \left | \int _ { \mathbb{ R } ^ { n } } \widetilde{D}_{\alpha, t} ^ {L, \beta }  ( x , y )
		\left(  f  ( y )  -  f  ( x )  \right)  d  y  \right | ;  \\
		I I : & = \left | f ( x )  \int _ { \mathbb{ R } ^ { n } } \widetilde{D}_{\alpha, t} ^ {L, \beta } ( x , y ) d  y  \right | .
	\end{aligned} \right.\]
	
	Similarly to \cite[Proposition 4.6]{ma 1}, we can prove that if $ f \in \Lambda _ { L , \gamma } \left( \mathbb { R } ^ { n } \right) $ then
	$$|f(x)-f(y)|\lesssim \|f\|_{\Lambda _ { L , \gamma }}|x-y|^{\gamma},$$
	which indicates that
	\begin{align*}
		I \lesssim  \| f \| _ { \Lambda _ { L , \gamma } }   \int _ { \mathbb{ R } ^ { n } }
		\frac {  t ^ {\beta / \alpha}  | x - y | ^ { \gamma }  }
		{ ( t ^ { 1 / ( 2 \alpha ) }  + | x - y | ) ^ { n + 2  \beta } }  d y
		\lesssim   t ^ { \gamma / ( 2 \alpha ) }  \| f \| _ { \Lambda _ { L , \gamma } }  .
	\end{align*}
	
	Below we estimate II. The proof of this part is divided into two cases.
	
	Case 1: $\rho ( x ) \leq  t ^ { 1 / ( 2 \alpha ) }  $. Noting that $|f(x)|\lesssim \| f \| _ {\Lambda _ { L , \gamma } }   \rho ( x ) ^ { \gamma },$ by Lemma \ref{bdd}, we can get
	\begin{align*}
		I I   \lesssim  \| f \| _ {\Lambda _ { L , \gamma } }   \rho ( x ) ^ { \gamma }
		\left| \int _ { \mathbb { R } ^ { n } }  \widetilde{D}_{\alpha, t} ^ {L, \beta } ( x , y ) d y \right|
		\lesssim   \| f \| _ { \Lambda _ { L , \gamma } }   t ^ {  \gamma / ( 2 \alpha ) }.
	\end{align*}
	
	Case 2: $\rho ( x ) > t ^ { 1 / ( 2 \alpha ) } $. We use Lemma \ref{bdd} to obtain that there exists $\delta ^ { \prime } > \gamma $ such that
	\begin{align*}
		I I
		\lesssim  \| f \| _ { \Lambda _ { L , \gamma } }    t ^ { \gamma / ( 2 \alpha ) }
		\frac { \left(  t ^ { 1 / ( 2 \alpha ) } / \rho ( x ) \right) ^ { \delta ' - \gamma } }
		{ \left( 1 + t ^ { 1 / ( 2 \alpha ) } / \rho ( x ) \right) ^ { N } }
		\lesssim   \| f \| _ { \Lambda _ { L , \gamma } }   t ^ {  \gamma / ( 2 \alpha ) }.
	\end{align*}
	
	We therefore conclude that
	\begin{align} \label{BDL1}
		\left( \frac { 1 } { | B | } \int _ { 0 } ^ { r _ { B } ^ { 2 \alpha }} \int _ { B }
		\left| \widetilde{D}_{\alpha, t} ^ {L, \beta } ( f ) ( x ) \right| ^ { 2 }
		\frac { d x d t } { t } \right)  ^ { 1 / 2 }    
		\lesssim | B | ^ { \gamma / n } ,
	\end{align}
	which implies that the function $ u (x,t) = exp ( - t L ^ {\alpha } ) f (x) $ solves equations  (\ref{cauchy}) with $ \| u \|_{ H _{ L,\gamma }^{\alpha, \beta} (\mathbb{ R }^{n+1}_{+}) } < \infty $.

	$( i i ) \Longrightarrow ( i ) $. Assume that  $u(\cdot , \cdot)$  be a solution to equations (\ref{cauchy}) satisfying $ \| u \|_{ H _{ L,\gamma }^{\alpha,\beta} (\mathbb{ R }^{n+1}_{+}) } < \infty $. Before proceeding with the proof, we first need to establish the following proposition:
	Assume that $\alpha \in ( 0 , 1 ) $, $ \beta > 0 $ and $0 < \gamma < \min \{ 1 , 2 \alpha , 2 \alpha \beta \} $. Let $f \in L ^ { 1 } \left( \mathbb { R } ^ { n } , ( 1 + | x | ) ^ { - ( n + \gamma + \epsilon ) } d x \right)$ for any $\epsilon > 0 $, and let a be an $H _ { L } ^ { n / ( n + \gamma ) }$-atom. We claim that for $\widetilde{D}_{\alpha, t} ^ {L, \beta } ( f )  ( x )$ and $ \widetilde{D}_{\alpha, t} ^ {L, \beta } ( a )  ( x ) $,
	there exists a constant $C_{ \alpha , \beta }$ such that
	\begin{equation}\label{eq-1111}
		C_{ \alpha , \beta } \int _ { \mathbb { R } ^ { n } } f ( x ) \overline { a ( x ) } d x
		= \iint _ { \mathbb { R } ^ { n + 1 } _ { + } } \widetilde{D}_{\alpha, t} ^ {L, \beta } ( f )  ( x ) \overline  { \widetilde{D}_{\alpha, t} ^ {L, \beta } ( a )  ( x ) }
		\frac { d x d t } { t } .
	\end{equation}
	If (\ref{eq-1111}) holds, let $a$ be an
	$H _ { L } ^ { n / ( n + \gamma ) }$-atom associated with $B = B \left( x _ { B } , r _ { B } \right) $. Then
	it can be deduced from 	(\ref{eq-1111}), (\ref{Sfunction}) and Theorem \ref{bdd S} that
	\begin{align*}
		\left| \int _ { \mathbb { R } ^ { n } } f ( x ) \overline { a ( x ) } d x \right|
		& \lesssim \left\| S _ { \alpha , \beta } ^ { L } ( a ) \right\|
		_ { L ^ { \frac{n}{( n + \gamma ) } }  }
		\sup _ { B } \left(   \int _ { B }
		\int _ { 0 } ^ {  r _ { B } ^ { 2 \alpha } }
		\frac { \left| \widetilde{D}_{\alpha, t} ^ {L, \beta }  ( f ) ( x ) \right| ^ { 2 }  } { | B | ^ { 1 + 2 \gamma / n } }
		\frac { d x d t } { t } \right) ^ { 1 / 2 }  \\
		& \lesssim  \left \|  a  \right \| _ { H _ { L } ^ { \frac{n}{( n + \gamma ) }  } }  .
	\end{align*}
	Therefore,
	\[T ( g ) : = \int _ { \mathbb { R } ^ { n } } f ( x ) \overline { g ( x ) } d x\]
	is a bounded linear functional on $H _ { L } ^ { n / ( n + \gamma ) } \left( \mathbb { R } ^ { n } \right) $, i.e., $f \in \left( H _ { L } ^ { n / ( n + \gamma ) } \left( \mathbb { R } ^ { n } \right) \right) ^ { * } = \Lambda _ { L , \gamma } \left( \mathbb { R } ^ { n } \right) $.
	
	At last we give the proof of (\ref{eq-1111}).
	Assume that $a$ is an $H _ { L } ^ { n / ( n + \gamma ) }$-atom associated with a ball
	$B \left( x _ { 0 } , r \right) .$ By Lemma \ref{FG} and Theorem \ref{bdd S}, we have
	\begin{align*}
		I & = \iint _ { \mathbb { R } ^ { n + 1 } _ { + } } \widetilde{D}_{\alpha, t} ^ {L, \beta } ( f )  ( x ) \overline  { \widetilde{D}_{\alpha, t} ^ {L, \beta } ( a )  ( x ) }
		\frac { d x d t } { t }  \\
		& = \lim _ { \epsilon \rightarrow 0 } \int _ { \epsilon } ^ { 1 / \epsilon }
		\int _ { \mathbb{ R } ^ { n } } \widetilde{D}_{\alpha, t} ^ {L, \beta } ( f )  ( x )
		\overline  { \widetilde{D}_{\alpha, t} ^ {L, \beta } ( a )  ( x ) } \frac { d x d t } { t }   .
	\end{align*}
	
	The inner integration satisfies
	\begin{align*}
		\left|  \int _ { \mathbb{ R } ^ { n } } \widetilde{D}_{\alpha, t} ^ {L, \beta } ( f )  ( x )  \overline  { \widetilde{D}_{\alpha, t} ^ {L, \beta } ( a )  ( x ) }  d  x  \right|
		\leq \int _ { \mathbb{ R } ^ { n } }  \big | \widetilde{D}_{\alpha, t} ^ {L, \beta } ( f )  ( x )  \big |
		\left\{ \sup_{ t > 0 } \big | \overline { \widetilde{D}_{\alpha, t} ^ {L, \beta } ( a )  ( x )  } \big | \right\} d x .
	\end{align*}
	By Lemma \ref{bdd}, we obtain
	\begin{align*}
		\left| \widetilde{D}_{\alpha, t} ^ {L, \beta } ( x , y ) \right|
		\leq  C _ { N } t ^ { - n / ( 2 \alpha ) }   \left( 1 + \frac {| x - y |}  { t ^ { 1 / ( 2 \alpha ) } } \right) ^ { - n - 2  \beta }   \left( 1 + \frac { t ^ { 1 / ( 2 \alpha ) } } { \rho ( x ) }
		+ \frac { t ^ { 1 / ( 2 \alpha ) } } { \rho ( y ) } \right) ^ { - N }   .
	\end{align*}
	
	If $x \in B \left( x _ { 0 } , 2 r \right) $, $1+|x| \leq 1 + 2r +|x_{0}|$. It follows from the condition
	$\| a \| _ { \infty } \leq \left| B \left( x _ { 0 } , r \right) \right| ^ { - 1 - \gamma / n }$ that
	\begin{align} \label{R frac D}
		\left| \int _ { \mathbb{ R } ^ { n } } \widetilde{D}_{\alpha, t} ^ {L, \beta } ( x , y )  a ( y ) d y  \right|
		\leq \frac{ C_ { N } }{ r ^ { n + \gamma } }
		\frac{ \left( 1 + 2 r + \left| x _ { 0 } \right| \right) ^ { n + 2  \beta } }
		{ \left( 1 + 2 r + \left| x _ { 0 } \right| \right) ^ { n + 2   \beta } }  \leq C _ { \gamma , x _ { 0 } , r  } ( 1 + | x | ) ^ { - n - 2  \beta }    .
	\end{align}
	
	If $x \notin B \left( x _ { 0 } , 2 r \right) $, for any $y \in B \left( x _ { 0 } , r \right) $,
	$ | x - y | \sim \left| x - x _ { 0 } \right| .$ On the other hand, $\rho ( y ) \sim \rho \left( x _ { 0 } \right) $ since $r < \rho \left( x _ { 0 } \right)$ and $\left| y - x _ { 0 }   \right| < r$.
	By Lemma \ref{bdd}, we can get
	\begin{align*}
		\left| \widetilde{D}_{\alpha, t} ^ {L, \beta } ( x , y ) \right|
		\leq  C _ { N }  t ^ { - n / ( 2 \alpha ) }  \left( 1 + \frac {| x - y |}  { t ^ { 1 / ( 2 \alpha ) } } \right) ^ { - n - 2  \beta } \left(  \frac { t ^ { 1 / ( 2 \alpha ) } }
		{ \rho ( x _ { 0 } ) } \right) ^ { - 2  \beta }      ,
	\end{align*}
	which implies that
	\begin{align*}
		\left| \int _ { \mathbb{ R } ^ { n } } \widetilde{D}_{\alpha, t} ^ {L, \beta } ( x , y )  a ( y ) d y  \right| \leq \frac { C _ { N } }{ r ^ {  \gamma } } \left( \rho ( x _ { 0 } ) \right) ^ { 2  \beta }  
		\left| x - x _ { 0 } \right|  ^ { - ( n + 2 \beta ) }
		: = \frac { C _ {  x _ { 0 } , r , \gamma } }{ \left| x - x _ { 0 } \right|  ^ {  n + 2 \beta ) }}  .
	\end{align*}
	
	Because $x \notin B \left( x _ { 0 } , 2 r \right)$, set $x = x _ { 0 } + 2 r z $, where $| z | \geq 1 $. Thus, $1 + | x | \leq 1 + \left| x _ { 0 } \right| + 2 r | z |$ and
	\begin{equation*}
		\frac{ 1 + \left| x _ { 0 } \right| + 2 r	}{ 2 r } \left| x - x _ { 0 } \right|
		=  ( 1 + \left| x _ { 0 } \right| + 2 r ) | z | \geq 1 + \left| x _ { 0 } \right| + 2 r | z |  ,
	\end{equation*}
	which implies that $\left| x _ { 0 } - x \right| \geq ( 1 + | x | ) / C _ { x _ { 0 } ,  r }$, and
	\begin{equation} \label{equ R D}
		\left| \int _ { \mathbb{ R } ^ { n } }  \widetilde{D}_{\alpha, t} ^ {L, \beta } ( x , y )  a ( y ) d y   \right| \leq C _ { \gamma , x _ { 0 } , r } ( 1 + | x | ) ^ { - n - 2  \beta } .
	\end{equation}
	
	The above estimate indicates that $\widetilde{D} ^ { L , \beta } _ { \alpha ,t } ( \cdot , \cdot )$ satisfies
	(\ref{bdd q}) with $\theta = 2 \beta$. On the other hand, it can be deduced from (\ref{R frac D}) and (\ref{equ R D}) that
	\begin{align*}
		\int _ { \mathbb{ R } ^ { n } }  \big | \widetilde{D}_{\alpha, t} ^ {L, \beta } ( f )  ( x )  \big |
		\cdot  \big | { \widetilde{D}_{\alpha, t} ^ {L, \beta }( a )  ( x )  } \big |  d x
		\lesssim  I _ { 1 } +  I _ { 2 }  ,
	\end{align*}
	where
	\[\left\{ \begin{aligned}
		I_{ 1 } &: = \iint_{ | x - y | > | y | / 2 }   \frac { | f ( y ) |  t ^ { \beta/\alpha } }
		{ ( t ^ { 1 / ( 2 \alpha ) } + | x - y | ) ^ { n + 2  \beta } }
		\frac{ d x d y } { ( 1 + | x | ) ^ { n + 2  \beta } } ; \\
		I_{ 2 } &: = \iint_{ | x - y | \leq  | y | / 2 }   \frac { | f ( y ) |  t ^ { \beta /\alpha } }
		{ ( t ^ { 1 / ( 2 \alpha ) } + | x - y | ) ^ { n + 2 \beta } }
		\frac{ d x d y } { ( 1 + | x | ) ^ { n + 2  \beta } } .
	\end{aligned} \right.\]
	
	If $| x - y | < | y | / 2 $, then $| y | \leq | y | / 2 + | x | $, i.e.,
	$ | y | \leq 2 | x | $. A direct computation gives
	\begin{align*}
		I_{ 2 }  \lesssim  \int _ { \mathbb{ R } ^ { n } }  \left(  \int _ { \mathbb{ R } ^ { n } }
		\frac { t ^ { \beta/\alpha } } { ( t ^ { 1 / ( 2 \alpha ) } + | x - y | ) ^ { n + 2  \beta } } d x \right)
		\frac{ | f ( y ) | }{ ( 1 + | y | ) ^ { n + 2  \beta } } d y   <  \infty  .
	\end{align*}
	
	For $I _ { 1 } $, since $| x - y | > | y | / 2 $, we can get
	\begin{align*}
		\int _ { \mathbb{ R } ^ { n } }  \frac { | f ( y ) |  t ^ { \beta/\alpha } }
		{ ( t ^ { 1 / ( 2 \alpha ) } + | x - y | ) ^ { n + 2 \beta } } d y
		\lesssim  \frac{ 1 } {  t ^ { n / ( 2 \alpha ) } }  \int _ { \mathbb{ R } ^ { n } }
		\frac { | f ( y ) | } { ( 1 + | y | /  t ^ { 1 / ( 2 \alpha ) }  ) ^ { n + 2  \beta } } d y  .
	\end{align*}
	
	Thus, there exists a constant $ C _ { t } $ such that
	\begin{align*}
		I_{ 1 }
		\lesssim  C _ { t }  \int _ { \mathbb{ R } ^ { n } }  \left(  \int _ { \mathbb{ R } ^ { n } }
		\frac {  | f ( y ) |  } { ( 1 + | y | ) ^ { n + 2  \beta } } d y \right)
		\frac{ d x  }{ ( 1 + | x | ) ^ { n + 2  \beta } }  <  \infty  .
	\end{align*}
	
	Noticing that
	\begin{align*}
		\int _ { \mathbb{ R } ^ { n } } \widetilde{D}_{\alpha, t} ^ {L, \beta } ( f )  ( x )
		\overline  { \widetilde{D}_{\alpha, t} ^ {L, \beta } ( a )  ( x ) }  d  x
		= \int _ { \mathbb{ R } ^ { n } }  f ( x )
		\overline { \widetilde{D}_{\alpha,2t} ^ {L,2\beta }  ( a )  ( x ) }  d  x   ,
	\end{align*}
	we can apply the Fubini theorem to obtain
	\begin{align*}
		I & = \lim _ { \epsilon \rightarrow 0 }   \int _ { \epsilon } ^ { 1 / \epsilon } \left\{
		\int _ { \mathbb{ R } ^ { n } }  f ( y )  \overline { \widetilde{D}_{\alpha,2t} ^ {L,2\beta }
			( a )  ( y ) }  d  y  \right\}     \frac { d t } { t }  \\
		& = \lim _ { \epsilon \rightarrow 0 }  \int _ { \mathbb{ R } ^ { n } }  f ( y )   \left\{
		\int _ { \epsilon } ^ { 1 / \epsilon }   \overline { \widetilde{D}_{\alpha,2t} ^ {L,2\beta }  ( a )  ( y )  }   \frac { d t } { t }  \right\}   d  y  .
	\end{align*}	
	
	Since
	\begin{align*}
		\left| \int _ { \epsilon } ^ { 1 / \epsilon }  \widetilde{D}_{\alpha,2t} ^ {L,2\beta }  ( a )  ( y )     \frac { d t } { t } \right|
		& \leq   \left| \int _ { \mathbb{ R } ^ { n } }  \int _ { \epsilon } ^ { \infty }   \widetilde{D}_{\alpha,2t} ^ {L,2\beta }   ( x , y )
		\frac { d t } { t }   a  ( x ) d x  \right| \\
		& \quad  + \left| \int _ { \mathbb{ R } ^ { n } }
		\int _ { 1 / \epsilon  } ^ { \infty }   \widetilde{D}_{\alpha,2t} ^ {L,2\beta }   ( x , y )
		\frac { d t } { t }   a  ( x ) d x  \right|  ,
	\end{align*}
	a change of variables yields
	\begin{align*}
		\left| \int _ { \epsilon } ^ { \infty }   \widetilde{D}_{\alpha,2t} ^ {L,2\beta }   ( x , y )
		\frac { d t } { t }  \right|
		\simeq \left| C _ { \beta }  \int _ { 2 \epsilon } ^ { \infty }
		\widetilde{D}_{\alpha,t} ^ {L,2\beta }  ( x , y )
		\frac { d t } { t }  \right|
		\simeq \left|  \int _ { 2 \epsilon } ^ { \infty }  \widetilde{D}_{\alpha,t} ^ {L,2\beta }
		( x , y )   \frac { d t } { t }  \right|  .
	\end{align*}
	
	The rest of the proof will be divided into three cases.
	
	Case 1: $2 \beta \in (0,\alpha) $. Since $ [ 2 \beta ] + 1 = 1 $,
	\begin{align*}
		t ^ { 2 \beta /\alpha }  L ^ { 2 \beta }	 exp ( - t L ^ {\alpha } )
		= t ^ { 2 \beta / \alpha } \int_{ 0 }^{ \infty } \int_{0}^{s} \partial_{ r }  exp ( - (t+r) L ^ {\alpha } )
		\frac{ d r d s }{ s ^ { 1 + 2 \beta / \alpha }} ,
	\end{align*}
	then
	\begin{align*}
		\int _ { 2 \epsilon } ^ { \infty } \widetilde{D}_{\alpha,t} ^ {L,2\beta }  ( x , y )
		\frac { d t } { t }
		&= \int _ { 2 \epsilon } ^ { \infty }   t ^ { 2 \beta/\alpha }  \int _ { 0 } ^ { \infty }
		\int_{r}^{\infty}  \frac{ d s }{ s ^ { 1 + 2 \beta/\alpha } }  \partial_{ r } K_{\alpha, t+r}^{L}
		( x , y ) d r   \frac { d t } { t }  \\
		&= \int _ { 2 \epsilon } ^ { \infty }  \partial_{ u }  K_{\alpha, u}^{L}  ( x , y )
		\left( \int  _ { 2 \epsilon  /  u  } ^ {  1  }   \left( \frac{ w } { 1 - w } \right) ^ { 2 \beta/\alpha }
		\frac { d w } { w }  \right) d u  .
	\end{align*}
	
	Note that
	\[\lim _ { u \rightarrow ( 2 \epsilon ) ^ { + } } K_{ \alpha, 2 \epsilon } ^ {L} ( x , y )
	\int  _ { 2 \epsilon  /  u  } ^ {  1  }  \left( \frac{ w } { 1 - w } \right) ^ { 2 \beta/\alpha }
	\frac { d w } { w }  = 0 ,\]
	and, as $u \rightarrow \infty $,
	\[  \left |   K_{\alpha, u}^{L}  ( x , y )  \right |
	\leq  \frac { u }  { ( u ^ { 1 / ( 2 \alpha ) } + | x - y | ) ^ { n + 2 \alpha } }
	\leq  \frac { 1 }  { u ^ { n / ( 2 \alpha ) } }   \rightarrow  0  .  \]
	
	An application of integration by parts gives
	\begin{align*}
		\int _ { 2 \epsilon } ^ { \infty } \widetilde{D}_{\alpha,t} ^ {L,2\beta } ( x , y )
		\frac { d t } { t }
	  &	= - C \int _ { 2 \epsilon } ^ { \infty }   K_{\alpha, u}^{L}  ( x , y )  \partial_{ u }
		\left( \int  _ { 2 \epsilon  /  u  } ^ {  1  }   \left( \frac{ w } { 1 - w } \right) ^ { 2 \beta/\alpha }
		\frac { d w } { w }  \right) d u   \\
	  &	= I + II  ,
	\end{align*}
	where
	\[\left\{ \begin{aligned}
		I : & = C \int _ { 2 \epsilon } ^ { \infty }   K_{\alpha, u}^{L}  ( x , y )  \left(
		\frac{ 2 \epsilon } { u - 2 \epsilon  } \right)  ^ { 2 \beta/\alpha }  \chi_{ A } ( u )  \frac { d u } { u } ; \\
		I I : & = C \int _ { 2 \epsilon } ^ { \infty }   K_{\alpha, u}^{L}  ( x , y ) \left(
		\frac{ 2 \epsilon } { u - 2 \epsilon  } \right) ^ { 2 \beta /\alpha}  \chi_{ A ^ c } ( u ) \frac { d u } { u } .
	\end{aligned} \right.\]
	Here $A : = \left\{ u : u - 2 \epsilon \leq \epsilon + | x - y | ^ { 2 \alpha } \right\} $.
	
	By (i) of Proposition \ref{bdd frac diff} and a straightforward calculation, we have
	\[\left\{ \begin{aligned}
		| I |  & &\lesssim  \frac { 1 } { \epsilon ^ { n / ( 2 \alpha ) } }
		\frac { 1 } { ( 1 + | x - y | / \epsilon ^ { 1 / ( 2 \alpha ) } ) ^ { n + 4 \beta } }
		\left( 1 + \frac { \epsilon ^ { 1 / ( 2 \alpha ) } } { \rho ( x ) }
		+ \frac { \epsilon ^ { 1 / ( 2 \alpha ) } } { \rho ( y ) } \right) ^ { - N }    ; \\
		| II | & &\lesssim  \frac { 1 } { \epsilon ^ { n / ( 2 \alpha ) } }
		\frac { 1 } { ( 1 + | x - y | / \epsilon ^ { 1 / ( 2 \alpha ) } ) ^ { n + 4 \beta } }
		\left( 1 + \frac { \epsilon ^ { 1 / ( 2 \alpha ) } } { \rho ( x ) }
		+ \frac { \epsilon ^ { 1 / ( 2 \alpha ) } } { \rho ( y ) } \right) ^ { - N }    .
	\end{aligned} \right.\]

	Case 2: $2 \beta = \alpha $. A direct computation gives
	\begin{align*}
		\left |  \int _ { 2 \epsilon } ^ { \infty } 	\widetilde{D}_{\alpha,t} ^ {L, 1} ( x , y )
		\frac { d t } { t }  \right |
		\lesssim  \left |  K_{ \alpha , 2 \epsilon } ^ { L } ( x , y ) \right |
		\lesssim \frac{ \left( 1 +  { \epsilon ^ { 1 / (2\alpha) } }  / { \rho ( x ) }
			+ { \epsilon ^ { 1 / (2\alpha) } } / { \rho ( y ) } \right) ^ { - N }} { \epsilon^{n/(2\alpha)} ( 1 + |x-y|/\epsilon^{1/(2\alpha)} ) ^{n+2\alpha }}
		.
	\end{align*}
	
	Case 3: $2 \beta > \alpha $. There exists a constant $k \in \mathbb { Z } _ { + }$ such that $k - 1 < 2 \beta / \alpha \leq k $, $ k \geq 2$. Then there exists a constant $ l \in [ 0 , 1 ) $ such that $ l = 2 \beta /\alpha - [ 2 \beta / \alpha ] = 2 \beta / \alpha + 1 - k $.
	Since
	\begin{align*}
		t ^ { 2 \beta / \alpha }  L ^ { 2 \beta }	 exp ( - t L ^ {\alpha } )
		= t ^ { 2 \beta / \alpha}  L ^ { \alpha [ 2 \beta/\alpha ] } \left( \int_{ 0 } ^ { \infty } \int_{ r } ^ { \infty }
		\frac{ d s }{ s ^ { 1 + l } }  \partial_{ r } exp( -  ( t + r )  L^ { \alpha } ) d r  \right),
	\end{align*}
	we can obtain
	\begin{align*}
		M & = C \int _ { 2 \epsilon } ^ { \infty }   t ^ { 2 \beta /\alpha}  \left(  \int_{ t } ^ { \infty}
		\partial_{ u } ^ { k }    K_{ \alpha , u } ^ { L }  ( x , y )
		\frac { d u } { ( u - t ) ^ { 1 + 2 \beta/\alpha - k } }  \right) \frac { d t } { t }   \\
		& = C \int _ { 2 \epsilon } ^ { \infty }  u ^ { k - 1 } \partial_{ u } ^ { k }    K_{ \alpha , u } ^ { L }
		( x , y )  \left( \int  _ { 2 \epsilon  /  u  } ^ {  1  }   w  ^ { 2 \beta / \alpha }
		\left(  1 - w  \right) ^ { k - 2 \beta /\alpha - 1 }   \frac { d w } { w }  \right) d u  ,
	\end{align*}
	where, in the last step, we have used the change of variables: $w = t / u $. Notice that
	\begin{align*}
		u ^ { k - 1 }  \partial_{ u } ^ { k }    K_{ \alpha , u } ^ { L } ( x , y )
		& = \partial_{ u } \left( { u } ^ { k - 1 }  \partial_{ u } ^ { k - 1 }
		K_{ \alpha , u } ^ { L }  ( x , y ) \right) - (k-1)\partial_{ u }\left( u ^{k-2} \partial_{ u }^{k-2}  K_{ \alpha , u } ^ { L }  ( x , y )  \right) \\
		& \quad +  \cdots  +  ( - 1 ) ^ { k - 1 }  ( k - 1 ) !    K_{ \alpha , u } ^ { L }  ( x , y ) .
	\end{align*}
	
	An application of integration by part gives
	\begin{align*}
		M & = C \int _ { 2 \epsilon } ^ { \infty } u ^ { k - 1 } \partial _ { u } ^ { k - 1 }
		K_{ \alpha , u } ^ { L } ( x , y )
		\frac { ( 2 \epsilon ) ^ { 2 \beta / \alpha } u ^ { 1 - k } }
		{ ( u - 2 \epsilon ) ^ { 1 + 2 \beta / \alpha - k } } \frac { d u } { u }
		+ \cdots   \\
		& \quad + C \int _ { 2 \epsilon } ^ { \infty } u \partial _ { u }
		K_{ \alpha , u } ^ { L } ( x , y )  \frac { ( 2 \epsilon ) ^ { 2 \beta / \alpha }
			u ^ { 1 - k } } { ( u - 2 \epsilon ) ^ { 1 + 2 \beta / \alpha - k } }
		\frac { d u } { u }  \\
		& \quad + C \int _ { 2 \epsilon } ^ { \infty }   K_{ \alpha , u } ^ { L } ( x , y )
		\frac { ( 2 \epsilon ) ^ { 2 \beta / \alpha }  u ^ { 1 - k } } { ( u - 2 \epsilon )
			^ { 1 + 2 \beta / \alpha - k } } 	\frac { d u } { u } .
	\end{align*}
	
	For any $1 \leq m \leq k - 1 $, by Proposition \ref{bdd D}, we have
	\begin{align*}
		\left | \int _ { 2 \epsilon } ^ { \infty } u ^ { m } \partial _ { u } ^ { m }
		K_{ \alpha , u } ^ { L } ( x , y ) \frac { ( 2 \epsilon ) ^ { 2 \beta / \alpha }
			u ^ { 1 - k } } { ( u - 2 \epsilon ) ^ { 1 + 2 \beta / \alpha - k } }  \frac { d u } { u }  \right |  \lesssim   I_ { m } ^ { ( 1 ) } + I_ { m } ^ { ( 2 ) } ,
	\end{align*}
	where
	\[\left\{ \begin{aligned}
		I_ { m } ^ { ( 1 ) } : & =
		\frac { \epsilon ^ { 2 \beta/\alpha } \left( 1  +  { \epsilon ^ { 1 / ( 2 \alpha ) } } / { \rho ( x ) }
			+  { \epsilon ^ { 1 / ( 2 \alpha ) } } / { \rho ( y ) } \right) ^ { - N } } { ( \epsilon ^ { 1 / ( 2 \alpha ) } + | x - y | ) ^ { n + 2 \alpha m } }   \int _ { 2 \epsilon }
		^ { 3 \epsilon } ( u - 2 \epsilon ) ^ { k - 2 \beta/\alpha - 1 } \frac { d u } { u ^ { k - m } }  ; \\
		I_ { m } ^ { ( 2 ) } : & =
		\frac { \epsilon ^ { 2 \beta/\alpha } \left( 1  +  { \epsilon ^ { 1 / ( 2 \alpha ) } } / { \rho ( x ) }
			+  { \epsilon ^ { 1 / ( 2 \alpha ) } } / { \rho ( y ) } \right) ^ { - N } } { ( \epsilon ^ { 1 / ( 2 \alpha ) } + | x - y | )
			^ { n + 2 \alpha m } }   \int _ { 3 \epsilon }
		^ { \infty } ( u - 2 \epsilon ) ^ { k - 2 \beta/\alpha - 1 } \frac { d u } { u ^ { k - m } }  .
	\end{aligned} \right.\]
	
	For $ I_ { m } ^ { ( 1 ) } $, since $2 \epsilon < u < 3 \epsilon $,
	\begin{align*}
		I_ { m } ^ { ( 1 ) } \lesssim  \frac { 1 } { \epsilon ^ { n / ( 2 \alpha ) } }
		\frac { 1 } { ( 1 + | x - y | / \epsilon ^ { 1 / ( 2 \alpha ) } ) ^ { n + 2 \alpha m } }
		\left( 1 + \frac { \epsilon ^ { 1 / ( 2 \alpha ) } } { \rho ( x ) }
		+ \frac { \epsilon ^ { 1 / ( 2 \alpha ) } } { \rho ( y ) } \right) ^ { - N }    .
	\end{align*}
	
	Similarly, for $  I_ { m } ^ { ( 2 ) } $, since $u \in ( 3 \epsilon , \infty ) $,
	$1 / u \lesssim 1 / ( u - 2 \epsilon ) $. Noting that $m < 2 \beta / \alpha $, we can get
	\begin{align*}
		I_ { m } ^ { ( 2 ) }
		\lesssim  \frac { 1 } { \epsilon ^ { n / ( 2 \alpha ) } }
		\frac { 1 } { ( 1 + | x - y | / \epsilon ^ { 1 / ( 2 \alpha ) } ) ^ { n + 2 \alpha m } }
		\left( 1 + \frac { \epsilon ^ { 1 / ( 2 \alpha ) } } { \rho ( x ) }
		+ \frac { \epsilon ^ { 1 / ( 2 \alpha ) } } { \rho ( y ) } \right) ^ { - N }    .
	\end{align*}
	By Lemma \ref{sup R q}, the above estimates in Cases $ 1-3 $ indciate that
	\begin{equation*}
		\sup_{ \epsilon > 0 }   \left| \int _ { \epsilon } ^ { 1 / \epsilon }  \widetilde{D}_{\alpha, 2t}
		^ {L, 2\beta }  ( a )  ( y )  \frac { d t } { t } \right|
		\lesssim \left( 1 + | y | \right)  ^ { - ( n + \gamma + \epsilon ) }  ,
	\end{equation*}
	where
	\begin{equation*}
		\epsilon = \left\{ \begin{aligned}
			&4 \beta - \gamma , & 2 \beta \in ( 0 , \alpha)  ; \\
			&2 \alpha - \gamma , & 2 \beta = \alpha ; \\
			&2 \alpha m - \gamma , & 2 \beta \in ( \alpha , \infty ) .
		\end{aligned} \right.
	\end{equation*}
	Therefore, we can use Lemma \ref{isometry} to complete the proof.

\end{proof}

\begin{theorem} \label{fD1}
	Define a general gradient as $ \nabla _ { \alpha } :
	= \left( \nabla _ { x } , \partial _ { t } ^ { 1 / ( 2  \alpha ) } \right) $.
	Let $V \in B _ { q } $, $q > n  $ and $ \beta > 0 $. Assume that $ u(x,t) $ be a solution to equations (\ref{cauchy})
	with the initial data $ f \in \Lambda _ { L , \gamma } \left( \mathbb { R } ^ { n } \right) $. Then the following characterizations of $ H_{L,\gamma}^{\alpha, \beta} ( \mathbb{ R }^{n+1}_{+} ) $ are equivalent.
	\item[\rm (i) ] $ u \in H_{L,\gamma}^{\alpha, \beta} ( \mathbb{ R }^{n+1}_{+} ) $.
	
	\item[\rm (ii) ] For $\alpha \in ( 0 , 1 ) $, $ \beta > 0 $, and
	$0 < \gamma < \min \{ 1 , 2 \alpha , 2  \alpha \beta \}$, $ \forall B = B \left( x _ { B } , r _ { B } \right) \subset \mathbb { R } ^ { n } $,
	\begin{equation} \label{BDL}
		\left( \frac { 1 } { | B |^ { 1 + 2 \gamma / n } } \int _ { 0 } ^ { r _ { B } ^ { 2 \alpha }} \int _ { B } \left| t ^ { \beta } \partial_{ t } ^ { \beta } u ( x , t ) \right| ^ { 2 } \frac { d x d t } { t } \right)	^ { 1 / 2 } \lesssim \| f \| _ {\Lambda _ { L , \gamma } } .
	\end{equation}
	\item [\rm (iii) ] For $\alpha \in ( 0 , 1 / 2 - n / ( 2 q ) )$, $ \beta > 0 $, and
	$0 < \gamma < \min \{ 1 , 2 \alpha , 2  \alpha \beta \}$, $ \forall B = B \left( x _ { B } , r _ { B } \right) \subset \mathbb { R } ^ { n } $,
	\begin{equation} \label{BeL}
		\left( \frac { 1 } { | B |^ { 1 + 2 \gamma / n } } \int _ { 0 } ^ { r _ { B } ^ { 2 \alpha }} \int _ { B } \left| t ^ { 1 / ( 2 \alpha ) } \nabla_ { \alpha } u ( x , t ) \right| ^ { 2 } \frac { d x d t } { t } \right)	^ { 1 / 2 } \lesssim \| f \| _ {\Lambda _ { L , \gamma } } .
	\end{equation}
\end{theorem}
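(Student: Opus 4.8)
The idea is to deduce all three conditions from Theorem~\ref{fe} together with the kernel estimates collected in Section~\ref{sec 3}, the only genuinely new ingredient being the spatial gradient. Since $f\in\Lambda_{L,\gamma}(\mathbb R^n)$ and the Cauchy problem~(\ref{cauchy}) has the unique solution $u(x,t)=\exp(-tL^\alpha)f(x)$, Theorem~\ref{fe}(i) already yields $u\in H^{\alpha,\beta}_{L,\gamma}(\mathbb R^{n+1}_+)$ with $\|u\|_{H^{\alpha,\beta}_{L,\gamma}}\lesssim\|f\|_{\Lambda_{L,\gamma}}$, i.e.\ (i), while Theorem~\ref{fe}(ii) supplies the reverse inequality $\|f\|_{\Lambda_{L,\gamma}}\lesssim\|u\|_{H^{\alpha,\beta}_{L,\gamma}}$. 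Hence (i) is two-sided comparable to $\|f\|_{\Lambda_{L,\gamma}}$, and it suffices to show that the quantities in (ii) and (iii) are comparable to $\|f\|_{\Lambda_{L,\gamma}}$ as well.

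For (ii) I would first identify the operator by spectral calculus. Inserting the resolution $L=\int_0^\infty\lambda\,dE(\lambda)$ into the definition~(\ref{10}) of $\partial_t^\beta$ and using $\int_0^\infty\exp(-u\lambda^\alpha)u^{m-\beta-1}\,du=\Gamma(m-\beta)\lambda^{-\alpha(m-\beta)}$ with $m=[\beta]+1$, one obtains that $\partial_t^\beta\exp(-tL^\alpha)$ and $L^{\alpha\beta}\exp(-tL^\alpha)$ agree up to a unimodular factor, so that
\[
\bigl|\,t^\beta\partial_t^\beta\exp(-tL^\alpha)f(x)\,\bigr|=\bigl|\,t^\beta L^{\alpha\beta}\exp(-tL^\alpha)f(x)\,\bigr|=\bigl|\,\widetilde D^{L,\alpha\beta}_{\alpha,t}(f)(x)\,\bigr|,\qquad x\in\mathbb R^n,\ t>0 .
\]
Thus the integral in (ii) is precisely the Carleson functional governing $\Lambda_{L,\gamma}$ for the operator $\widetilde D^{L,\alpha\beta}_{\alpha,t}=t^{(\alpha\beta)/\alpha}L^{\alpha\beta}\exp(-tL^\alpha)$, and the hypothesis $0<\gamma<\min\{1,2\alpha,2\alpha\beta\}$ lies within the range required by Lemma~\ref{bdd}, Proposition~\ref{bdd D} and the area-function bound of Theorem~\ref{bdd S} for the shifted parameter $\alpha\beta$. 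Applying Theorem~\ref{fe} with $\beta$ replaced by $\alpha\beta$ then gives $\text{(ii)}\sim\|f\|_{\Lambda_{L,\gamma}}$, in particular (i)$\iff$(ii).

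For (iii), write $\nabla_\alpha=(\nabla_x,\partial_t^{1/(2\alpha)})$, so that $t^{1/(2\alpha)}\nabla_\alpha u=\bigl(D^L_{\alpha,t}(f),\,t^{1/(2\alpha)}\partial_t^{1/(2\alpha)}\exp(-tL^\alpha)f\bigr)$. The time component equals $\widetilde D^{L,1/2}_{\alpha,t}(f)$ up to a unimodular factor, by the computation above with $\beta=1/(2\alpha)$, and is handled as in the previous paragraph with parameter $1/2$ (here $0<\gamma<\min\{1,2\alpha\}$ suffices). For the spatial component I would reprise the $f\in\Lambda_{L,\gamma}\Rightarrow$ estimate in the proof of Theorem~\ref{fe}: split
\[
t^{1/(2\alpha)}\nabla_x\exp(-tL^\alpha)f(x)=\int_{\mathbb R^n}D^L_{\alpha,t}(x,y)\bigl(f(y)-f(x)\bigr)\,dy+f(x)\,D^L_{\alpha,t}(1)(x),
\]
bound the first term with the pointwise estimate $|f(x)-f(y)|\lesssim\|f\|_{\Lambda_{L,\gamma}}|x-y|^\gamma$ and the size bound of Proposition~\ref{t x frac}(i) (the integral converges because $\gamma<2\alpha$), and bound the second term with $|f(x)|\lesssim\|f\|_{\Lambda_{L,\gamma}}\rho(x)^\gamma$ and Proposition~\ref{t x frac}(iii), splitting into the regimes $\rho(x)\le t^{1/(2\alpha)}$ (where the $(t^{1/(2\alpha)}/\rho(x))^{-N}$ branch is used) and $\rho(x)>t^{1/(2\alpha)}$ (where the $(t^{1/(2\alpha)}/\rho(x))^{1+2\alpha}$ branch is used). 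This produces $|t^{1/(2\alpha)}\nabla_x\exp(-tL^\alpha)f(x)|\lesssim\|f\|_{\Lambda_{L,\gamma}}\,t^{\gamma/(2\alpha)}$; integrating over $\{0<t<r_B^{2\alpha}\}\times B$ and dividing by $|B|^{1+2\gamma/n}\sim r_B^{\,n+2\gamma}$ gives the bound $\lesssim\|f\|_{\Lambda_{L,\gamma}}$. Adding the two components proves the upper estimate in (iii); for the reverse estimate one runs the duality/area-function scheme of Theorems~\ref{fe} and~\ref{bdd S} (via Lemmas~\ref{isometry}, \ref{FG} and~\ref{sup R q}) with $D^L_{\alpha,t}$ in place of $\widetilde D^{L,\beta}_{\alpha,t}$, the required $L^2$- and $H^{n/(n+\gamma)}_L$-bounds for the associated area function coming from Proposition~\ref{t x frac}(i)--(ii). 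Combining the three two-sided estimates, (i), (ii) and (iii) are all comparable to $\|f\|_{\Lambda_{L,\gamma}}$, hence mutually equivalent.

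The step I expect to be the main obstacle is the spatial component of (iii). First, Proposition~\ref{t x frac}(iii) --- the estimate on $D^L_{\alpha,t}(1)$ that controls the ``$f(x)\int D^L_{\alpha,t}(x,y)\,dy$'' term --- is available only for $\alpha\in(0,1/2-n/(2q))$, which is exactly the restriction imposed in (iii); moreover this restriction forces $2\alpha<1-n/q$, so that the gradient smoothness exponent $\delta_1=1-n/q$ in Proposition~\ref{t x frac}(ii) exceeds $2\alpha>\gamma$, which is what allows the area-function argument for $D^L_{\alpha,t}$ to be carried through (one cannot merely discard the gradient and work with $\widetilde D^{L,1/2}_{\alpha,t}$, since its Case-2 constraint would only permit $\gamma<\alpha$). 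Second, unlike $\widetilde D^{L,\beta}_{\alpha,t}$, the family $t^{1/(2\alpha)}\nabla_x\exp(-tL^\alpha)$ is not a spectral multiplier, so $\int_0^\infty\bigl(t^{1/(2\alpha)}\nabla_\alpha\exp(-tL^\alpha)\bigr)^{\!*}\bigl(t^{1/(2\alpha)}\nabla_\alpha\exp(-tL^\alpha)\bigr)\,\tfrac{dt}{t}$ is only comparable to --- not a scalar multiple of --- the identity; the Calder\'on reproducing identity needed for the reverse estimate in (iii) must therefore be assembled by combining the genuine isometry provided by the $\partial_t^{1/(2\alpha)}$-component (Lemma~\ref{isometry} with $\beta=1/2$) with the gradient part. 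Everything else --- the $\rho$-weight bookkeeping, the changes of variables in the $t$-integrals, and the two-regime split $\rho(x)\le t^{1/(2\alpha)}$ versus $\rho(x)>t^{1/(2\alpha)}$ --- is a routine repetition of the arguments in the proofs of Theorems~\ref{fe} and~\ref{bdd S}.
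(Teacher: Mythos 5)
Your treatment of (i)$\iff$(ii) and the forward half of (iii) tracks the paper's proof: the paper also handles $I(x)+II(x)$ for the spatial gradient exactly as you describe (Lipschitz bound on $f$, Proposition~\ref{t x frac}, two-regime $\rho(x)$ split) and invokes (i)$\iff$(ii) for the $\partial_t^{1/(2\alpha)}$-component, the only difference being that you make the spectral identification $\partial_t^\beta e^{-tL^\alpha}=e^{i\pi\beta}L^{\alpha\beta}e^{-tL^\alpha}$ explicit whereas the paper defers (i)$\iff$(ii) to \cite{li}. The genuine divergence is in the reverse half of (iii), and here you take a substantially harder road than the paper does. The paper's argument is one step: since $|t^{1/(2\alpha)}\partial_t^{1/(2\alpha)}u|\le |t^{1/(2\alpha)}\nabla_\alpha u|$, hypothesis~(\ref{BeL}) already contains the Carleson condition~(\ref{BDL}) for the fractional derivative of order $\beta'=1/(2\alpha)$ (equivalently $L$-exponent $\alpha\beta'=1/2$), and since $2\alpha\beta'=1$ the restriction $\gamma<\min\{1,2\alpha,2\alpha\beta'\}$ is automatically met under~(iii); the established (i)$\iff$(ii) then yields $f\in\Lambda_{L,\gamma}$ and Theorem~\ref{fe} gives~(i). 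You explicitly reject this route on the grounds that ``working with $\widetilde D^{L,1/2}_{\alpha,t}$ would only permit $\gamma<\alpha$'' and instead propose constructing a square-function/duality argument directly on the spatial family $D^L_{\alpha,t}$, together with a bespoke Calder\'on reproducing formula assembled from the time isometry to compensate for $\nabla_x$ not being a spectral multiplier. That is not what the paper does and is not needed: your $\gamma<\alpha$ barrier comes from reading Theorem~\ref{bdd S}'s hypothesis $\gamma<2\alpha\beta$ with $\beta$ the $L$-exponent, but the proof of that theorem only exploits the kernel decay $(t^{1/(2\alpha)}+|x-y|)^{-n-2\beta}$ (via $\int_{(8B)^c}(r^{2\beta-\gamma}/|x-x_0|^{n+2\beta})^{n/(n+\gamma)}dx<\infty$), which is satisfied once $\gamma<2\beta=1$, and the constraint Theorem~\ref{fD1}(ii) actually carries for derivative order $\beta'=1/(2\alpha)$ works out to $\gamma<\min\{1,2\alpha\}$. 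So the area-function estimate for $D^L_{\alpha,t}$ that you identify as the main obstacle is not proved in the paper and does not need to be; dropping the gradient does the job.
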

\begin{proof}
	(i) $\Longleftrightarrow$ (ii).
	By Theorem \ref{fe}, $ u \in H_{L,\gamma}^{\alpha, \beta} ( \mathbb{ R }^{n+1}_{+} ) $ is equivalent to $ f \in \Lambda _ { L , \gamma } \left( \mathbb { R } ^ { n } \right) $, then proof is similar to that in \cite[page 33 - page 46]{li}, so we omit the details here.

	(i) $ \Longrightarrow $ (iii).
	By Theorem \ref{fe}, $f \in \Lambda _ { L , \gamma } \left( \mathbb { R } ^ { n } \right) $.
	Since $ ( i ) \Longleftrightarrow ( i i ) $, we can obtain
	$$\left\| D _ { \alpha , t } ^ { L , 1 / ( 2 \alpha ) } ( f ) \right\| _ { \infty }
	\leq C _ { \alpha , \beta }  t ^ { \gamma / ( 2 \alpha ) }  .  $$
	One writes
	\begin{align*}
		D_{ \alpha , t } ^ { L }  f ( x )   : = I ( x ) + I I ( x ) ,
	\end{align*}
	where
	\[\left\{ \begin{aligned}
		I ( x ) : & = \int _ { \mathbb { R } ^ { n } }  D_{ \alpha , t } ^ { L } ( x , z ) \left( f ( z ) - f ( x ) \right) d z ;  \\
		I I ( x ) : & = f ( x )  D_{ \alpha , t } ^ { L } ( 1 ) ( x )  .
	\end{aligned} \right.\]
	
	For $ I(x) $, similarly to \cite[Proposition 4.6]{ma 1}, we can prove that if $ f \in \Lambda _ { L , \gamma } \left( \mathbb { R } ^ { n } \right) $ then
	$$| f ( x ) - f ( z ) | \leq \| f \| _ { \Lambda _ { L , \gamma }  } | x - z | ^ { \gamma } ,$$
	which, together with Proposition \ref{t x frac}, implies that
	\begin{align*}
		| I ( x ) |  \lesssim \| f \| _ {\Lambda _ { L , \gamma } }  \int _ { \mathbb { R } ^ { n } }
		\left |  t  ^ { 1 / ( 2  \alpha ) }  \nabla _ { x }  K _ { \alpha , t } ^ { L }  ( x , z ) \right |
		\cdot | x - z | ^ { \gamma } d z
		\lesssim t ^ { \gamma / ( 2 \alpha ) }  \| f \| _ {\Lambda _ { L , \gamma } } .
	\end{align*}

	For $II(x)$, we split the discussion into the following two cases:
	
	Case 1: $\rho ( x ) \leq t ^ { 1 / ( 2 \alpha ) } $. Similarly to \cite[Proposition 4.6]{ma 1}, we can prove that if $ f \in \Lambda _ { L , \gamma }\left( \mathbb { R } ^ { n } \right) $ then
	$$| f ( x ) | \lesssim  \rho ^ { \gamma } ( x )\| f \| _ { \Lambda _ { L , \gamma } },$$
	which, together with Proposition \ref{t x frac}, implies that
	\begin{align*}
		I I ( x )  \lesssim \| f \| _ { \Lambda _ { L , \gamma }  }  \rho ^ { \gamma } ( x )
		\left | D_{ \alpha , t } ^ { L }  ( 1 ) ( x ) \right |
		\lesssim  \frac{ t  ^ { \gamma  / ( 2  \alpha ) }  \| f \| _ { \Lambda _ { L , \gamma }  }  }
		{\left(  { t ^ { 1 / ( 2 \alpha ) }  } / { \rho ( x ) } \right)  ^ {  N + \gamma }}
		\lesssim \| f \| _ { \Lambda _ { L , \gamma } }    t  ^ { \gamma  / ( 2  \alpha ) } .
	\end{align*}
	
	Case 2: $\rho ( x ) > t ^ { 1 / ( 2 \alpha ) } $. Similarly, we can obtain
	\begin{align*}
		I I ( x )
		\lesssim \| f \| _ { \Lambda _ { L , \gamma }  }  \rho ^ { \gamma } ( x )
		\left( \frac { t ^ { 1 / ( 2 \alpha ) }  } { \rho ( x ) } \right) ^ { 1 + 2 \alpha  }      \lesssim \| f \| _ { \Lambda _ { L , \gamma } }    t  ^ { \gamma  / ( 2  \alpha ) } .
	\end{align*}
	
	Then for every ball $B = B \left( x _ { B } , r _ { B } \right) ,$
	\[ \int _ { 0 } ^ {  r _ { B } ^ { 2 \alpha } } \int _ { B }    \left| t ^ { 1 / ( 2 \alpha ) } \nabla_ { \alpha } u ( x , t ) \right| ^ { 2 } \frac { d x d t } { t }
	\lesssim  \int _ { 0 } ^ {  r _ { B } ^ { 2 \alpha } } \int _ { B }  t^{\gamma/\alpha-1}dxdt  \lesssim  r _ { B } ^ { n + 2 \gamma } ,\]
	which implies that (\ref{BeL}) holds.
	
	(iii) $\Longrightarrow$ (i).
	If (\ref{BeL}) holds, for every ball
	$B = B \left( x _ { B } , r _ { B } \right) $, we can obtain
	\begin{equation*}
		\sup _ { B }  { r _ { B } } ^ { - ( n+ 2 \gamma ) } \int _ { 0 } ^ {  r _ { B } ^ { 2 \alpha } }
		\int _ { B \left( x _ { B } , r _ { B } \right) }
		\left| t  ^ { 1 / ( 2  \alpha ) }  \partial_{ t } ^ { 1 / ( 2  \alpha ) }   u ( x , t ) \right| ^ { 2 } \frac { d x d t } { t }  <  \infty ,
	\end{equation*}
	which is a corollary of (i) $ \Longleftrightarrow$ (ii)  that
	$f \in \Lambda _ { L , \gamma } \left( \mathbb { R } ^ { n } \right) $ with
	\begin{align*}
		\| f \| _ { \Lambda _ { L , \gamma }  }   \lesssim \sup _ { B }
		{ r _ { B } } ^ { - ( n+ 2 \gamma ) } \int _ { 0 } ^ {  r _ { B } ^ { 2 \alpha } }
		\int _ { B \left( x _ { B } , r _ { B } \right) }
		\left| t  ^ { 1 / ( 2  \alpha ) }  \partial_{ t } ^ { 1 / ( 2  \alpha ) }  u ( x , t ) \right| ^ { 2 } \frac { d x d t } { t }<\infty.
	\end{align*}
	By Theorem \ref{fe}, the function $ u (x,t) = exp ( - t L ^ {\alpha } ) f (x) $ solves equations (\ref{cauchy}) with $ \| u \|_{ H _{ L,\gamma }^{\alpha, \beta} (\mathbb{ R }^{n+1}_{+}) } < \infty $.
\end{proof}

By Theorems \ref{fe} \& \ref{fD1}, we will give several equivalent characterizations of $ \Lambda ^ { \kappa } _ { L , \gamma } $.
\begin{corollary} \label{Pt character}
	Let $ \gamma \in (0,1) $ and $\kappa \geq 0 $. For each measurable function $f$ on $\mathbb { R } ^ { n }$, it holds that
	\begin{align*}
		& \displaystyle \left\| f \right\| _ { \Lambda ^ { \kappa } _ { L , \gamma } }
		 = \displaystyle  \left\| L ^ { \kappa } f \right\| _ { \Lambda _ { L , \gamma } }  \\
		& \sim \left( \sup _ { ( x _ {0} , r ) \in \mathbb { R } ^ { n + 1 } _ {+} } r ^ { - ( 2 \gamma + n ) }
		\int _ { B ( x_{0} , r ) } \int _ { 0 } ^ { r ^ { 2 \alpha } }
		\left| t ^ { \beta / \alpha } L ^ {\beta} exp ( - t L ^ {\alpha } ) L ^ { \kappa }  f ( x ) \right|
		^ { 2 } \frac{ d t d x }{ t }  \right) ^ { 1 / 2 }   \\
		& \sim \left( \sup _ { ( x _ {0} , r ) \in \mathbb { R } ^ { n + 1 } _ {+} } r ^ { - ( 2 \gamma + n ) }
		\int _ { B ( x_{0} , r ) } \int _ { 0 } ^ { r ^ { 2 \alpha } }
		\left|   t ^ { \beta } \partial _ { t } ^ { \beta } exp ( - t L ^ {\alpha } ) L ^ { \kappa }  f ( x ) \right|
		^ { 2 } \frac{ d t  d x }{ t }  \right) ^ { 1 / 2 }   \\
		& \sim \left( \sup _ { ( x _ {0} , r ) \in \mathbb { R } ^ { n + 1 } _ {+}} r ^ { - ( 2 \gamma + n ) }
		\int _ { B ( x_{0} , r ) } \int _ { 0 } ^ { r ^ { 2 \alpha } } \left| t ^ { 1 / ( 2 \alpha ) } \nabla_{ \alpha }  exp ( - t L ^ {\alpha } )  L ^ { \kappa } f ( x ) \right|
		^ { 2 } \frac{ d t d x }{ t }  \right) ^ { 1 / 2 }   ;
	\end{align*}
\end{corollary}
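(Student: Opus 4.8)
The plan is to reduce everything to Theorems~\ref{fe} and \ref{fD1} applied to the single function $g := L^{\kappa} f$. By the very definition of the Campanato--Sobolev space, $\|f\|_{\Lambda^{\kappa}_{L,\gamma}} = \|L^{\kappa}f\|_{\Lambda_{L,\gamma}} = \|g\|_{\Lambda_{L,\gamma}}$, and $f \in \Lambda^{\kappa}_{L,\gamma}(\mathbb{R}^n)$ if and only if $g \in \Lambda_{L,\gamma}(\mathbb{R}^n)$. (For $\kappa = 0$ the statement is trivial; for $\kappa > 0$ the operator $L^\kappa$ is understood through the fractional-power formula (\ref{def frac L}), and, since $L^\kappa$ commutes with $\exp(-tL^\alpha)$ and with $L^\beta$ via the spectral resolution, the three kernels appearing in the corollary are the ones obtained by inserting $g$ into the kernels of Theorem~\ref{fD1}.) Hence it suffices to prove the three displayed equivalences with $g$ in place of $L^\kappa f$, i.e. to show
\[
\|g\|_{\Lambda_{L,\gamma}} \sim \Big(\sup_{(x_0,r)} r^{-(2\gamma+n)}\int_{B(x_0,r)}\int_0^{r^{2\alpha}} |t^{\beta/\alpha}L^\beta \exp(-tL^\alpha)g(x)|^2\,\frac{dt\,dx}{t}\Big)^{1/2},
\]
and likewise for $t^\beta\partial_t^\beta\exp(-tL^\alpha)g$ and $t^{1/(2\alpha)}\nabla_\alpha\exp(-tL^\alpha)g$.

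First I would verify that $g = L^\kappa f \in \Lambda_{L,\gamma}$ meets the admissibility hypothesis (\ref{Rf1}) required to invoke Theorems~\ref{fe} and \ref{fD1}. Exactly as in the proof of Theorem~\ref{fe} (following \cite[Proposition 4.6]{ma 1}), membership in $\Lambda_{L,\gamma}$ forces the Hölder-type bound $|g(x)-g(y)|\lesssim \|g\|_{\Lambda_{L,\gamma}}|x-y|^\gamma$ and the pointwise bound $|g(x)|\lesssim \|g\|_{\Lambda_{L,\gamma}}\big(\rho(x)^\gamma + |x-x_B|^\gamma\big)$ on balls, so $|g(x)|\lesssim (1+|x|)^\gamma$ up to a constant; since $\gamma < 1$, the integral $\int_{\mathbb{R}^n}|g(x)|(1+|x|)^{-(n+\gamma+\epsilon)}\,dx$ converges for every $\epsilon>0$. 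With this in hand, Theorem~\ref{fe} applied to $g$ produces $u(x,t):=\exp(-tL^\alpha)g(x)$ as the solution of the Cauchy problem (\ref{cauchy}) with data $g$, together with the two-sided estimate $\|g\|_{\Lambda_{L,\gamma}}\sim \|u\|_{H^{\alpha,\beta}_{L,\gamma}(\mathbb{R}^{n+1}_+)}$ (the direction $\lesssim$ being estimate (\ref{BDL1}) and the direction $\gtrsim$ coming from the duality pairing with $H^{n/(n+\gamma)}_L$). Because $t^{\beta/\alpha}L^\beta u(x,t)=t^{\beta/\alpha}L^\beta\exp(-tL^\alpha)g(x)=\widetilde{D}^{L,\beta}_{\alpha,t}(g)(x)$, this is precisely the first of the three characterizations.

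Next I would invoke Theorem~\ref{fD1} for this same $u$: it gives the equivalence of $u\in H^{\alpha,\beta}_{L,\gamma}$ with the conditions expressed through $t^\beta\partial_t^\beta u$ (item (ii), valid for all $\beta>0$) and through $t^{1/(2\alpha)}\nabla_\alpha u$ (item (iii), valid for $\alpha\in(0,\,1/2-n/(2q))$), with the accompanying bounds $\lesssim \|g\|_{\Lambda_{L,\gamma}}$ and the converse implications. Substituting $u=\exp(-tL^\alpha)g=\exp(-tL^\alpha)L^\kappa f$ and using $|B(x_0,r)|^{1+2\gamma/n}\sim r^{n+2\gamma}$ turns the three quantities in Theorem~\ref{fD1} into exactly the three suprema displayed in the corollary. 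Chaining $\|f\|_{\Lambda^\kappa_{L,\gamma}}=\|g\|_{\Lambda_{L,\gamma}}\sim\|u\|_{H^{\alpha,\beta}_{L,\gamma}}\sim\cdots$ then yields all the asserted equivalences.

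The argument is essentially bookkeeping once the two theorems are in place; the only points needing genuine care are: (a) the growth/admissibility step, namely checking that $g=L^\kappa f\in\Lambda_{L,\gamma}$ does satisfy (\ref{Rf1}); (b) the consistent tracking of normalizations ($|B|^{1+2\gamma/n}$ versus $r_B^{n+2\gamma}$) and of the parameter ranges ($\beta>0$ throughout, but $\alpha<1/2-n/(2q)$ only for the gradient characterization, and $q>n$, $0<\gamma<\min\{1,2\alpha,2\alpha\beta\}$ inherited from Theorem~\ref{fD1}) between the Campanato--Sobolev statement and the plain Campanato statement; and (c), for general $\kappa\ge 0$, justifying via the spectral resolution that $L^\beta\exp(-tL^\alpha)L^\kappa f=L^\kappa\exp(-tL^\alpha)L^\beta f$ on the relevant domain so that the substitution of $g$ is legitimate.
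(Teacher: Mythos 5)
Your proposal is correct and matches the paper's intent exactly: the corollary is stated as a direct consequence of Theorems~\ref{fe} and~\ref{fD1} applied to $g=L^{\kappa}f$, which is precisely the reduction you carry out, including the verification that $g\in\Lambda_{L,\gamma}$ satisfies the growth hypothesis~(\ref{Rf1}). Your point~(c) about commuting $L^{\kappa}$ past $\exp(-tL^{\alpha})$ and $L^{\beta}$ is actually superfluous, since the corollary's displayed expressions already read $L^{\beta}\exp(-tL^{\alpha})L^{\kappa}f$, $\partial_t^{\beta}\exp(-tL^{\alpha})L^{\kappa}f$, etc., so substituting $g=L^{\kappa}f$ requires no commutation at all.
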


\subsection*{Funding information}


The research was supported by Shandong Natural Science Foundation of China (No.ZR2024MA016) and the National Natural Science Foundation of China (No.12471093).

\subsection*{Conflict of Interests}

This work does not have any conflicts of interest.



\end{document}